\newtheorem{theorem}{Theorem}
\newtheorem{corollary}[theorem]{Corollary}
\newtheorem{definition}[theorem]{Definition}
\newtheorem{example}[theorem]{Example}
\newtheorem{proposition}[theorem]{Proposition}
\newtheorem{remark}[theorem]{Remark}
\newenvironment{proof}[1][Proof]{\noindent\textbf{#1.} }{\ \rule{0.5em}{0.5em}}
\numberwithin{theorem}{section}
\numberwithin{equation}{section}
\begin{document}

\title{Distinguished Torsion, Curvature and Deflection Tensors in the
Multi-Time Hamilton Geometry}
\author{Gheorghe Atanasiu and Mircea Neagu}
\date{}
\maketitle

\begin{abstract}
The aim of this paper is to present the main geometrical objects on the dual
1-jet bundle $J^{1\ast }(\mathcal{T},M)$ (this is the polymomentum phase
space of the De Donder-Weyl covariant Hamiltonian formulation of field
theory) that characterize our approach of multi-time Hamilton geometry. In
this direction, we firstly introduce the geometrical concept of a nonlinear
connection $N$ on the dual 1-jet space $J^{1\ast }(\mathcal{T},M)$. Then,
starting with a given $N$-linear connection $D$ on $J^{1\ast }(\mathcal{T}%
,M) $, we describe the adapted components of the torsion, curvature and
deflection distinguished tensors attached to the $N$-linear connection $D.$
\end{abstract}

\textbf{Mathematics Subject Classification (2000):} 53B40, 53C60, 53C07.

\textbf{Key words and phrases:} dual 1-jet vector bundle, nonlinear
connections, $N$-linear connections, torsion, curvature and deflection
d-tensors.

\section{Introduction}

\hspace{4mm} It is well known that the 1-jet spaces are the basic
mathematical objects used in the study of classical and quantum field
theories. For this reason, the differential geometry of $1$-jet bundles was
intensively studied by a lot of authors like, for example, (in chronological
order) Saunders [22], Asanov [4], Neagu and Udri\c{s}te [21], [19].

In the last decades, numerous physicists and geometers were preoccupied with
the development of a branch of mathematical-physics, which is situated at
the junction of the Theoretical Physics with the Differential Geometry and
the Theory of PDEs. This branch of mathematical-physics is characterized, on
the one hand (from the point of view of physicists), by the geometrical
quantization of the covariant Hamiltonian field theories, and, on the other
hand (from the point of view of geometers), by the geometrization of
ordinary Lagrangians and Hamiltonians from Analytical Mechanics or of
multi-time Lagrangians and Hamiltonians used in Theoretical Physics.

In order to reach their aim, the physicists use in their studies that
so-called the \textit{covariant Hamiltonian geometry of physical fields},
which is the \textit{multi-parameter}, or \textit{multi-time}, extension of
the classical Hamiltonian formulation from Mechanics.

It is important to point out that the covariant Hamiltonian geometry of
physical fields appears in the literature of specialty in three distinct
variants (the \textit{multisymplectic} geometry, the \textit{polysymplectic}
geometry and the \textit{De Donder-Weyl covariant Hamiltonian} geometry),
which differ by the \textit{phase space} used in study. All these three
different alternative extensions of the Hamiltonian formulation to field
theory (these extensions originate from the calculus of variations of
multiple integrals) reduce to the classical Hamiltonian formalism from
Mechanics if the number of space-time dimensions equals to one.

Using the technics of the symplectic geometry, the \textit{multisymplectic
covariant geometry of physical fields }is developed by Gotay, Isenberg,
Marsden, Montgomery and their co-workers [10], [11] on a finite-dimensional
multisymplectic phase space.

In order to quantize the covariant Hamiltonian field theory (this is the
final purpose in the framework of quantum field theory), Giachetta,
Mangiarotti and Sardanashvily [8], [9] develop the \textit{polysymplectic
Hamiltonian geometry}, which studies the relations between the equations of
first order Lagrangian field theory on fiber bundles and the covariant
Hamilton equations on a finite-dimensional polysymplectic phase space.

Another convenient approach for quantization of the Hamiltonian field theory
is the \textit{De Donder-Weyl Hamiltonian canonical formulation of field
theory} (this is known for about 80 years), which is intensively studied by
Kanatchikov (please see [12], [13], [14] and references therein) as opposed
to the conventional field-theoretical Hamiltonian formalism, which requires
the space $+$ time decomposition and leads to the picture of a field as a
mechanical system with infinitely many degrees of freedom. The De
Donder-Weyl Hamiltonian approach is achieved by assigning the canonical
momentum like variables to the whole set of space-time derivatives of a
field, that is $\partial _{b}x^{j}\rightarrow p_{j}^{b}$, where $x^{i}$
denote field variables $(i=1,...,n)$, $t^{a}$ are space-time variables $%
(a=1,...,m)$, $\partial _{b}x^{j}$ are space-time derivatives (or first
jets) of field variables and $p_{j}^{b}$ denote \textit{polymomenta}.

In this direction, in the De Donder-Weyl polymomentum canonical theory,
given a Lagrangian function $L=L(t^{a},x^{i},\partial _{b}x^{j})$, the
polymomenta are introduced by the formula $p_{j}^{b}:=\partial L/\partial
(\partial _{b}x^{j})$ and the corresponding \textit{De Donder-Weyl
Hamiltonian function} is given by $H:=\left( \partial _{a}x^{i}\right)
p_{i}^{a}-L$, where it is obvious that $H$ is a function of variables $%
z^{I}:=(t^{a},x^{i},p_{i}^{a})$. In these variables the Euler-Lagrange field
equations can be equivalent rewritten in the form of \textit{De Donder-Weyl
Hamiltonian field equations} [12], [13]%
\begin{equation*}
\begin{array}{cc}
\dfrac{\partial x^{i}}{\partial t^{a}}=\dfrac{\partial H}{\partial p_{i}^{a}}%
, & \dfrac{\partial p_{i}^{a}}{\partial t^{a}}=-\dfrac{\partial H}{\partial
x^{i}},%
\end{array}%
\end{equation*}%
which, for $m=1,$ reduce to the standard Hamilton-Jacobi equations from
Mechanics, and for $m>1$, provide a multi-time covariant generalization of
the Hamiltonian formalism.

From the perspective of geometers, we point out that, following the
geometrical ideas initially stated by Asanov in the paper [4], a \textit{%
multi-time Lagrange contravariant geometry on 1-jet spaces} (in the sense of
distinguished connections, torsions and curvatures) was recently developed
by Neagu and Udri\c{s}te [19], [21]. This geometrical theory is a natural
multi-time extension on 1-jet spaces of the already classical \textit{%
Lagrange geometrical theory of the tangent bundle} elaborated by Miron and
Anastasiei [16]. On the other hand, suggested by the field theoretical
extension of the basic structures of classical Analytical Mechanics within
the framework of the De Donder-Weyl covariant Hamiltonian formulation, the
studies of Miron [15], Atanasiu [5], [6] and others led to the development
of the \textit{Hamilton geometry of the cotangent bundle} exposed in the
book [17].

In such a physical and geometrical context, the aim of this paper is to
expose the basic geometrical objects (such as the distinguished connections,
torsions and curvatures) on the dual 1-jet vector bundle $J^{1\ast }(%
\mathcal{T},M)$ (the polymomentum phase space), coordinated by $%
(t^{a},x^{i},p_{i}^{a})$, where $a=\overline{1,m}$ and $i=\overline{1,n}$.
This geometrical theory (which finally represents a geometrization for
multi-time Hamiltonian functions) is called by us the \textit{multi-time
covariant Hamilton geometry}. Note that the multi-time covariant Hamilton
geometry is a natural multi-time generalization of the Hamilton geometry of
the cotangent bundle [17]. We sincerely hope that the geometrical results
exposed in this paper to have a physical meaning for physicists, physical
meaning which we do not know yet.

Finally, we would like to point out that the \textit{multi-time Legendre jet
duality }between this multi-time covariant Hamilton geometry and the already
constructed multi-time contravariant Lagrange geometry [19] (this is a
geometrization for jet multi-time Lagrangian functions, which is different
of the geometrization from the paper [18]) is a part of our work in progress
and represents a general direction for our future studies.

\section{The dual $1$-jet bundle $J^{1\ast }\left( \mathcal{T},M\right) $}

\hspace{4mm} Let $\mathcal{T}$ and $M$ be two smooth manifolds of dimensions 
$m,$ respectively $n,$ whose local coordinates are $\left( t^{a}\right) _{a=%
\overline{1,m}},$ respectively $\left( x^{i}\right) _{i=\overline{1,n}}.$

\begin{remark}
i) Throughout this paper all geometrical objects and all mappings are
considered of class $C^{\infty }$, expressed by the words \textbf{%
differentiable} or \textbf{smooth}.

ii) Note that the indices $a,b,c,d,e,f,g,h$ run over the set $\left\{
1,2,...,m\right\} $, the indices $i,j,k,l,p,q,r,s$ run over the set $\left\{
1,2,...,n\right\} $, and the Einstein convention of summation is adopted all
over this work.
\end{remark}

Let us consider the $1$-jet vector bundle $J^{1}\left( \mathcal{T},M\right)
\rightarrow \mathcal{T}\times M$ which has the coordinates $%
(t^{a},x^{i},x_{a}^{i})$ and the fibre type $\mathbb{R}^{mn}.$

\begin{remark}
From a physical point of view, we use the following terminology:

\begin{itemize}
\item The manifold $\mathcal{T}$ is regarded as a \textbf{temporal}
manifold, or a \textbf{multi-time} manifold. Note that our temporal
coordinates $t^{a}$ are like the space-time variables in the De Donder-Weyl
covariant Hamilton geometry [12], [14].

\item The manifold $M$ is regarded as a \textbf{spatial} one. Note that our
spatial coordinates $x^{i}$ denote the field variables in the De Donder-Weyl
covariant Hamilton geometry [12], [13], [14].

\item The coordinates $x_{a}^{i}$ are regarded by us as \textbf{partial
directions} or \textbf{partial velocities}. Note that, in the De Donder-Weyl
covariant Hamilton geometry [12], [13], [14], the coordinates $x_{a}^{i}$
are space-time derivatives (or first jets) of the field variables.

\item The fibre bundle $J^{1}\left( \mathcal{T},M\right) \rightarrow 
\mathcal{T}\times M$ is regarded as a \textbf{bundle of configurations}.
This is because in the particular case $\mathcal{T}=\mathbb{R}$ (i.e., the
temporal manifold $\mathcal{T}$ coincides with the usual time axis
represented by the set of real numbers $\mathbb{R}$), we recover the bundle
of configurations which characterizes the classical non-autonomous
mechanics. For more details, please see Abraham and Marsden [1] and Buc\u{a}%
taru and Miron [7].
\end{itemize}
\end{remark}

In order to simplify the notations, we will use the notation $E=J^{1}\left( 
\mathcal{T},M\right) $. We recall that the transformation of coordinates $%
\left( t^{a},x^{i},x_{a}^{i}\right) \longleftrightarrow \left( \tilde{t}^{a},%
\tilde{x}^{i},\tilde{x}_{a}^{i}\right) $, induced from $\mathcal{T}\times M$
on the $1$-jet space $E$, are given by%
\begin{equation}
\left\{ 
\begin{array}{ll}
\tilde{t}^{a}=\tilde{t}^{a}\left( t^{b}\right) , & \det \left( \dfrac{%
\partial \tilde{t}^{a}}{\partial t^{b}}\right) \neq 0,\medskip \\ 
\tilde{x}^{i}=\tilde{x}^{i}\left( x^{j}\right) , & \det \left( \dfrac{%
\partial \tilde{x}^{i}}{\partial x^{j}}\right) \neq 0,\medskip \\ 
\tilde{x}_{a}^{i}=\dfrac{\partial \tilde{x}^{i}}{\partial x^{j}}\dfrac{%
\partial t^{b}}{\partial \tilde{t}^{a}}x_{b}^{j}. & 
\end{array}%
\right.  \label{tr-group-jet}
\end{equation}

\begin{remark}
Let $\mathbb{R}\times TM$ be the trivial bundle over the base tangent space $%
TM,$ whose coordinates induced by $TM$ are $\left( t,x^{i},y^{i}\right) $, $%
i=\overline{1,n},$ $n=\dim M,$ $t\in \mathbb{R}$ being a temporal parameter.
Then, the changes of coordinates on the trivial bundle $\mathbb{R}\times
TM\rightarrow TM$ are given by%
\begin{equation*}
\left\{ 
\begin{array}{l}
\tilde{t}=t,\medskip \\ 
\tilde{x}^{i}=\tilde{x}^{i}\left( x^{j}\right) ,\medskip \\ 
\tilde{y}^{i}=\dfrac{\partial \tilde{x}^{i}}{\partial x^{j}}y^{j}.%
\end{array}%
\right.
\end{equation*}

A time dependent Lagrangian function for $M$ is a real valued function $L$
on $\mathbf{E}=\mathbb{R}\times TM.$ Such Lagrangians (called \textbf{%
rheonomic} or \textbf{non-autonomous}) are important for variational
calculus and non-autonomous mechanics. A \textbf{rheonomic Lagrange geometry}
on the trivial bundle $\mathbf{E}=\mathbb{R}\times TM$ was developed by
Anastasiei, Kawaguchi and Miron [2], [3], [16].

More general, let us consider the 1-jet bundle $\mathbb{E}=J^{1}(\mathbb{R}%
,M)\equiv \mathbb{R}\times TM$ over the product manifold $\mathbb{R}\times M$%
. Then, the changes of coordinates on $\mathbb{E}$ have the form%
\begin{equation*}
\left\{ 
\begin{array}{l}
\tilde{t}=\tilde{t}\left( t\right) ,\medskip  \\ 
\tilde{x}^{i}=\tilde{x}^{i}\left( x^{j}\right) ,\medskip  \\ 
\tilde{y}^{i}=\dfrac{\partial \tilde{x}^{i}}{\partial x^{j}}\dfrac{dt}{d%
\tilde{t}}y^{j}.%
\end{array}%
\right. 
\end{equation*}%
These transformations of coordinates point out the \textbf{relativistic}
character played by the time $t$ in the non-autonomous mechanics. A \textbf{%
relativistic rheonomic Lagrange geometry} on the 1-jet bundle $\mathbb{E}%
=J^{1}(\mathbb{R},M)$ was developed by Neagu in [20]. This geometry has many
similarities with the geometry elaborated by Anastasiei, Kawaguchi and
Miron, but they are however distinct ones.
\end{remark}

Using the general theory of vector bundles, we can consider the dual $1$-jet
vector bundle $E^{\ast }=J^{1\ast }\left( \mathcal{T},M\right) $, whose
fibre type is also $\mathbb{R}^{mn}.$ In fact, the construction of the dual
1-jet vector bundle relies on the substitution of coordinates $%
x_{a}^{i}\rightarrow p_{i}^{a}$, that is if $E=J^{1}\left( \mathcal{T}%
,M\right) $ is coordinated by $\left( t^{a},x^{i},x_{a}^{i}\right) $, then
the corresponding dual $1$-jet space $E^{\ast }=J^{1\ast }\left( \mathcal{T}%
,M\right) $ is coordinated by $\left( t^{a},x^{i},p_{i}^{a}\right) .$
Moreover, the transformation of coordinates $\left(
t^{a},x^{i},p_{i}^{a}\right) \leftrightarrow \left( \tilde{t}^{a},\tilde{x}%
^{i},\tilde{p}_{i}^{a}\right) $, induced from $\mathcal{T}\times M$ on the
dual $1$-jet space $E^{\ast },$ are given by%
\begin{equation}
\left\{ 
\begin{array}{ll}
\tilde{t}^{a}=\tilde{t}^{a}\left( t^{b}\right) , & \det \left( \dfrac{%
\partial \tilde{t}^{a}}{\partial t^{b}}\right) \neq 0,\medskip \\ 
\tilde{x}^{i}=\tilde{x}^{i}\left( x^{j}\right) , & \det \left( \dfrac{%
\partial \tilde{x}^{i}}{\partial x^{j}}\right) \neq 0,\medskip \\ 
\tilde{p}_{i}^{a}=\dfrac{\partial x^{j}}{\partial \tilde{x}^{i}}\dfrac{%
\partial \tilde{t}^{a}}{\partial t^{b}}p_{j}^{b}. & 
\end{array}%
\right.  \label{tr-group-dual-jet}
\end{equation}

According to Kanatchikov's physical terminology [13], we introduce

\begin{definition}
The coordinates $p_{i}^{a},$ $a=\overline{1,m},$ $i=\overline{1,n},$ are
called \textbf{polymomenta}, and\textbf{\ }the dual $1$-jet space $E^{\ast
}\ $is called the \textbf{polymomentum phase space}.
\end{definition}

\begin{remark}
Let $\mathbb{R}\times T^{\ast }M$ be the trivial bundle over the base
cotangent space $T^{\ast }M$, whose coordinates induced by $T^{\ast }M$ are $%
\left( t,x^{i},p_{i}\right) ,$ $i=\overline{1,n},$ $n=\dim M,$ $t\in \mathbb{%
R}$ being temporal parameter. Then, the changes of coordinates on the
trivial bundle$\ \mathbb{R}\times T^{\ast }M\rightarrow T^{\ast }M$ are
given by\ 
\begin{equation*}
\left\{ 
\begin{array}{l}
\tilde{t}=t\medskip \\ 
\tilde{x}^{i}=\tilde{x}^{i}\left( x^{j}\right) ,\medskip \\ 
\tilde{p}_{i}=\dfrac{\partial x^{j}}{\partial \tilde{x}^{i}}p_{j}%
\end{array}%
\right.
\end{equation*}

A time dependent Hamiltonian function for $M$ is a real valued function $H$
on $\mathbf{E}^{\ast }=\mathbb{R}\times T^{\ast }M.$ Such Hamiltonians
(called \textbf{rheonomic} or \textbf{non-autonomous}) are important for
covariant Hamiltonian approach of non-autonomous mechanics. A geometrization
of these Hamiltonians on the trivial bundle $\mathbf{E}^{\ast }=\mathbb{R}%
\times T^{\ast }M$ is studied by Miron, Atanasiu and their co-workers [5],
[6], [17].

More general, in our jet geometrical approach, we use a relativistic time $t$%
. This is because we have a change of coordinates (induced on $\mathbf{E}%
^{\ast }=\mathbb{R}\times T^{\ast }M$ by the product manifold $\mathbb{R}%
\times M$) of the form%
\begin{equation*}
\left\{ 
\begin{array}{l}
\tilde{t}=\tilde{t}\left( t\right) \medskip \\ 
\tilde{x}^{i}=\tilde{x}^{i}\left( x^{j}\right) ,\medskip \\ 
\tilde{p}_{i}=\dfrac{\partial x^{j}}{\partial \tilde{x}^{i}}\dfrac{d\tilde{t}%
}{dt}p_{j}.%
\end{array}%
\right.
\end{equation*}

The above two groups of transformations of coordinates are in use for the 
\textbf{Hamilton geometry} that study the metrical structure 
\begin{equation*}
g^{ij}\left( t,x,p\right) =\frac{1}{2}\frac{\partial ^{2}H}{\partial
p_{i}\partial p_{j}}.
\end{equation*}
\end{remark}

Doing a transformation of coordinates (\ref{tr-group-dual-jet}) on $E^{\ast
} $, we find the following results:

\begin{proposition}
The local natural basis $\left\{ \dfrac{\partial }{\partial t^{a}},\dfrac{%
\partial }{\partial x^{i}},\dfrac{\partial }{\partial p_{i}^{a}}\right\} $
of the Lie algebra of the vector fields $\chi \left( E^{\ast }\right) $
transforms by the laws:%
\begin{equation}
\begin{array}{l}
\dfrac{\partial }{\partial t^{a}}=\dfrac{\partial \tilde{t}^{b}}{\partial
t^{a}}\dfrac{\partial }{\partial \tilde{t}^{b}}+\dfrac{\partial \tilde{p}%
_{j}^{b}}{\partial t^{a}}\dfrac{\partial }{\partial \tilde{p}_{j}^{b}}%
,\medskip \\ 
\dfrac{\partial }{\partial x^{i}}=\dfrac{\partial \tilde{x}^{j}}{\partial
x^{i}}\dfrac{\partial }{\partial \tilde{x}^{j}}+\dfrac{\partial \tilde{p}%
_{j}^{b}}{\partial x^{i}}\dfrac{\partial }{\partial \tilde{p}_{j}^{b}}%
,\medskip \\ 
\dfrac{\partial }{\partial p_{i}^{a}}=\dfrac{\partial \tilde{t}^{b}}{%
\partial t^{a}}\dfrac{\partial x^{i}}{\partial \tilde{x}^{j}}\dfrac{\partial 
}{\partial \tilde{p}_{j}^{b}};%
\end{array}
\label{tr-laws-basis}
\end{equation}
\end{proposition}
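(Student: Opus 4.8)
The plan is to apply the chain rule to each of the three natural basis fields, writing every original coordinate derivative as a linear combination of the tilded ones, and then to exploit the block structure of the transformation (\ref{tr-group-dual-jet}) to discard the Jacobian entries that vanish. Concretely, for the passage $(t^{a},x^{i},p_{i}^{a})\mapsto (\tilde{t}^{b},\tilde{x}^{j},\tilde{p}_{j}^{b})$, I would first expand
\[
\frac{\partial }{\partial t^{a}}=\frac{\partial \tilde{t}^{b}}{\partial t^{a}}\frac{\partial }{\partial \tilde{t}^{b}}+\frac{\partial \tilde{x}^{j}}{\partial t^{a}}\frac{\partial }{\partial \tilde{x}^{j}}+\frac{\partial \tilde{p}_{j}^{b}}{\partial t^{a}}\frac{\partial }{\partial \tilde{p}_{j}^{b}},
\]
together with the two analogous identities for $\partial /\partial x^{i}$ and $\partial /\partial p_{i}^{a}$.

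The essential remark is that the first two lines of (\ref{tr-group-dual-jet}) assert that $\tilde{t}^{b}$ depends only on the temporal coordinates $t^{c}$ and $\tilde{x}^{j}$ only on the spatial coordinates $x^{k}$, while neither depends on the polymomenta. Hence $\partial \tilde{x}^{j}/\partial t^{a}=0$, $\partial \tilde{t}^{b}/\partial x^{i}=0$, and $\partial \tilde{t}^{b}/\partial p_{i}^{a}=\partial \tilde{x}^{j}/\partial p_{i}^{a}=0$. Substituting these zeros into the three chain-rule expansions at once yields the first two announced laws and collapses the third one to the single term $(\partial \tilde{p}_{j}^{b}/\partial p_{i}^{a})\,\partial /\partial \tilde{p}_{j}^{b}$.

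It then remains only to evaluate $\partial \tilde{p}_{j}^{b}/\partial p_{i}^{a}$. For this I would use the third line of (\ref{tr-group-dual-jet}), namely $\tilde{p}_{j}^{b}=(\partial x^{k}/\partial \tilde{x}^{j})(\partial \tilde{t}^{b}/\partial t^{c})\,p_{k}^{c}$, which is linear in the polymomenta with coefficients depending only on $t$ and $x$. Differentiating with respect to $p_{i}^{a}$ and using $\partial p_{k}^{c}/\partial p_{i}^{a}=\delta _{k}^{i}\delta _{a}^{c}$ gives
\[
\frac{\partial \tilde{p}_{j}^{b}}{\partial p_{i}^{a}}=\frac{\partial x^{i}}{\partial \tilde{x}^{j}}\frac{\partial \tilde{t}^{b}}{\partial t^{a}},
\]
which is exactly the coefficient in the third law.

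There is no serious obstacle here; the computation is purely mechanical once the dependence pattern of the new coordinates on the old ones is recorded. The only point I would flag explicitly, rather than hide, is that in the first two laws the coefficients $\partial \tilde{p}_{j}^{b}/\partial t^{a}$ and $\partial \tilde{p}_{j}^{b}/\partial x^{i}$ are genuinely nonzero --- they carry the second derivatives $\partial ^{2}\tilde{t}^{b}/\partial t^{a}\partial t^{c}$ and $\partial ^{2}x^{k}/\partial \tilde{x}^{j}\partial x^{i}$, respectively --- so they must be retained and left in compact form at this stage. Keeping them unexpanded is precisely what will make the inhomogeneous transformation rule, and hence the ensuing definition of a nonlinear connection $N$ on $E^{\ast }$, emerge naturally.
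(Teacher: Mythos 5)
Your proposal is correct and is exactly the argument the paper intends: the paper states the proposition as an immediate consequence of ``doing a transformation of coordinates (\ref{tr-group-dual-jet})'', i.e.\ the same chain-rule expansion with the vanishing Jacobian blocks $\partial \tilde{t}^{b}/\partial x^{i}$, $\partial \tilde{x}^{j}/\partial t^{a}$, $\partial \tilde{t}^{b}/\partial p_{i}^{a}$, $\partial \tilde{x}^{j}/\partial p_{i}^{a}$ discarded and $\partial \tilde{p}_{j}^{b}/\partial p_{i}^{a}$ evaluated from the linearity of $\tilde{p}_{j}^{b}$ in the polymomenta. Your closing remark that the unexpanded coefficients $\partial \tilde{p}_{j}^{b}/\partial t^{a}$ and $\partial \tilde{p}_{j}^{b}/\partial x^{i}$ are the inhomogeneous terms motivating the nonlinear connection matches the paper's subsequent development as well.
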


\begin{proposition}
The local natural cobasis $\left\{ dt^{a},dx^{i},dp_{i}^{a}\right\} $ of the
algebra Lie of the covector fields $\chi ^{\ast }\left( E^{\ast }\right) $
transforms by the laws:%
\begin{equation}
\begin{array}{l}
dt^{a}=\dfrac{\partial t^{a}}{\partial \tilde{t}^{b}}d\tilde{t}^{b},\medskip
\\ 
dx^{i}=\dfrac{\partial x^{i}}{\partial \tilde{x}^{j}}d\tilde{x}^{j},\medskip
\\ 
dp_{i}^{a}=\dfrac{\partial p_{i}^{a}}{\partial \tilde{t}^{b}}d\tilde{t}^{b}+%
\dfrac{\partial p_{i}^{a}}{\partial \tilde{x}^{j}}d\tilde{x}^{j}+\dfrac{%
\partial t^{a}}{\partial \tilde{t}^{b}}\dfrac{\partial \tilde{x}^{j}}{%
\partial x^{i}}d\tilde{p}_{j}^{b}.%
\end{array}
\label{tr-laws-cobasis}
\end{equation}
\end{proposition}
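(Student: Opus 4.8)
The plan is to regard the cobasis elements $dt^a$, $dx^i$, $dp_i^a$ as the total (exterior) differentials of the coordinate functions $t^a$, $x^i$, $p_i^a$ and to re-express each as a linear combination of $d\tilde{t}^b$, $d\tilde{x}^j$, $d\tilde{p}_j^b$ by the chain rule. This is the computation dual to the preceding proposition: whereas there one pushes the partial derivative operators forward along (\ref{tr-group-dual-jet}), here one pulls the differentials back, so the two transformation matrices must be mutually contragredient. First I would record the inverse coordinate transformation, which exists because the two Jacobian determinants in (\ref{tr-group-dual-jet}) are nonzero, writing $t^a = t^a(\tilde{t}^b)$, $x^i = x^i(\tilde{x}^j)$, and $p_i^a = p_i^a(\tilde{t}^b, \tilde{x}^j, \tilde{p}_j^b)$.

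For the first two formulas the work is immediate. Since $\tilde{t}^a$ depends only on $t^b$ and $\tilde{x}^i$ only on $x^j$, the same holds for the inverse maps, so the total differentials collapse to the single-sum chain rule $dt^a = (\partial t^a/\partial \tilde{t}^b)\,d\tilde{t}^b$ and $dx^i = (\partial x^i/\partial \tilde{x}^j)\,d\tilde{x}^j$, with no $d\tilde{p}$ contributions. This reproduces the first two lines of (\ref{tr-laws-cobasis}).

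The substantive step is the differential of $p_i^a$, which genuinely depends on all three groups of new coordinates, so its total differential splits into three sums,
\begin{equation*}
dp_i^a = \frac{\partial p_i^a}{\partial \tilde{t}^b}\,d\tilde{t}^b + \frac{\partial p_i^a}{\partial \tilde{x}^j}\,d\tilde{x}^j + \frac{\partial p_i^a}{\partial \tilde{p}_j^b}\,d\tilde{p}_j^b.
\end{equation*}
The coefficients of $d\tilde{t}^b$ and $d\tilde{x}^j$ involve second-order derivatives of the coordinate changes, and I would simply leave them in the symbolic form $\partial p_i^a/\partial \tilde{t}^b$ and $\partial p_i^a/\partial \tilde{x}^j$, exactly as in the statement. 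The only coefficient that must be evaluated explicitly is $\partial p_i^a/\partial \tilde{p}_j^b$, and this is where the main effort lies. I would obtain it by inverting (\ref{tr-group-dual-jet}): contracting both sides with $(\partial \tilde{x}^k/\partial x^i)(\partial t^a/\partial \tilde{t}^c)$ and using the inverse-Jacobian identities $(\partial \tilde{x}^k/\partial x^i)(\partial x^l/\partial \tilde{x}^k) = \delta_i^l$ and $(\partial t^a/\partial \tilde{t}^c)(\partial \tilde{t}^c/\partial t^d) = \delta_d^a$ yields $p_i^a = (\partial \tilde{x}^j/\partial x^i)(\partial t^a/\partial \tilde{t}^b)\,\tilde{p}_j^b$. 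Differentiating in $\tilde{p}_j^b$ then gives $\partial p_i^a/\partial \tilde{p}_j^b = (\partial t^a/\partial \tilde{t}^b)(\partial \tilde{x}^j/\partial x^i)$, which is precisely the coefficient in the third line of (\ref{tr-laws-cobasis}). As a final consistency check I would verify that this matrix is the contragredient of the one appearing in (\ref{tr-laws-basis}), confirming the expected duality between basis and cobasis.
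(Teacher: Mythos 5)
Your proposal is correct and follows exactly the direct calculation the paper intends (the paper states this proposition without explicit proof, introducing it merely with ``Doing a transformation of coordinates (\ref{tr-group-dual-jet}) on $E^{\ast}$, we find''): one takes total differentials of the inverse coordinate change and applies the chain rule, leaving the $d\tilde{t}^{b}$ and $d\tilde{x}^{j}$ coefficients of $dp_{i}^{a}$ in symbolic form just as the statement does. Your explicit inversion $p_{i}^{a}=\dfrac{\partial \tilde{x}^{j}}{\partial x^{i}}\dfrac{\partial t^{a}}{\partial \tilde{t}^{b}}\tilde{p}_{j}^{b}$ and the resulting coefficient $\partial p_{i}^{a}/\partial \tilde{p}_{j}^{b}$ are accurate, and the contragredience check against (\ref{tr-laws-basis}) is a sound confirmation.
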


\section{Nonlinear connections}

\hspace{4mm} Taking into account the complicated transformation rules (\ref%
{tr-laws-basis}) and (\ref{tr-laws-cobasis}), we need a \textit{nonlinear
connection} on the dual 1-jet space $E^{\ast }$, in order to construct some 
\textit{adapted bases} whose transformation rules to be more simple
(tensorial ones, for instance).

Let $u^{\ast }=\left( t^{a},x^{i},p_{i}^{a}\right) \in E^{\ast }$ be an
arbitrary point and let us consider the differential map%
\begin{equation*}
\pi ^{\ast }\ _{\ast ,u^{\ast }}:T_{u^{\ast }}E^{\ast }\rightarrow T_{\left(
t,x\right) }\left( \mathcal{T}\times M\right)
\end{equation*}%
of the canonical projection $\pi ^{\ast }:E^{\ast }\rightarrow \mathcal{T}%
\times M,$ $\pi ^{\ast }\left( u^{\ast }\right) =\left( t,x\right) $,
together with its vector subspace $W_{u^{\ast }}=Ker$ $\pi ^{\ast }\ _{\ast
,u^{\ast }}\subset T_{u^{\ast }}E^{\ast }.\ $Because the differential map $%
\pi ^{\ast }\ _{\ast ,u^{\ast }}$ is a surjection, we find that we have $%
\dim _{\mathbb{R}}W_{u^{\ast }}=mn$ and, moreover, a basis in $W_{u^{\ast }%
\text{ }}$is determined by $\left\{ \dfrac{\partial }{\partial p_{i}^{a}}%
\right\} .$

So, the map%
\begin{equation*}
\mathcal{W}:u^{\ast }\in E^{\ast }\rightarrow W_{u^{\ast }}\subset
T_{u^{\ast }}E^{\ast }
\end{equation*}%
is a differential distribution which is called the \textit{vertical
distribution} on the dual 1-jet vector bundle $E^{\ast }.$

\begin{definition}
A differential distribution $\mathcal{H}:u^{\ast }\in E^{\ast }\rightarrow
H_{u^{\ast }}\subset T_{u^{\ast }}E^{\ast }$, which is supplementary to the
vertical distribution $\mathcal{W},$ i. e.%
\begin{equation}
T_{u^{\ast }}E^{\ast }=H_{u^{\ast }}\oplus W_{u^{\ast },}\text{ }\forall 
\text{ }u^{\ast }\in E^{\ast },  \label{hor-distrib}
\end{equation}%
is called a \textbf{nonlinear connection} on $E^{\ast }$ ($\mathcal{H}\ $is
also called the \textbf{horizontal distribution} on $E^{\ast }$).
\end{definition}

The above definition implies that $\dim _{\mathbb{R}}H_{u^{\ast }}=m+n,$ $%
\forall $ $u^{\ast }\in E^{\ast }$, and that the Lie algebra of the vector
fields $\chi \left( E^{\ast }\right) $ can be decomposed in the direct sum\ $%
\chi \left( E^{\ast }\right) =\mathcal{S}\left( \mathcal{H}\right) \oplus 
\mathcal{S}\left( \mathcal{W}\right) ,$ where $\mathcal{S}\left( \mathcal{H}%
\right) $ (resp. $\mathcal{S}\left( \mathcal{W}\right) $) is the set of
differentiable sections on $\mathcal{H}$ (resp. $\mathcal{W}$)$.$

Supposing that $\mathcal{H}$\ is a fixed nonlinear connection on $E^{\ast }$%
, we have the isomorphism%
\begin{equation*}
\left. \pi ^{\ast }\ _{\ast ,u^{\ast }}\right\vert _{H_{u^{\ast
}}}:H_{u^{\ast }}\rightarrow T_{\pi ^{\ast }\left( u^{\ast }\right) }\left( 
\mathcal{T}\times M\right) ,
\end{equation*}%
which allows us to prove the following result:

\begin{theorem}
\label{Th-nlc} i) There exist the unique, linear independent, horizontal
vector fields $\dfrac{\delta }{\delta t^{a}},$ $\dfrac{\delta }{\delta x^{i}}%
\in \mathcal{S}\left( \mathcal{H}\right) $, having the properties:%
\begin{equation}
\begin{array}{cc}
\pi ^{\ast }\ _{\ast }\left( \dfrac{\delta }{\delta t^{a}}\right) =\dfrac{%
\partial }{\partial t^{a}}, & \pi ^{\ast }\ _{\ast }\left( \dfrac{\delta }{%
\delta x^{i}}\right) =\dfrac{\partial }{\partial x^{i}}.%
\end{array}
\label{delta-t-si-x}
\end{equation}

ii) The horizontal vector fields $\dfrac{\delta }{\delta t^{a}}$ and $\dfrac{%
\delta }{\delta x^{i}}$ can be uniquely written in the form:%
\begin{equation}
\begin{array}{cc}
\dfrac{\delta }{\delta t^{a}}=\dfrac{\partial }{\partial t^{a}}-\underset{1}{%
N}\underset{}{\overset{\left( b\right) }{_{\left( j\right) a}}}\dfrac{%
\partial }{\partial p_{j}^{b}}, & \dfrac{\delta }{\delta x^{i}}=\dfrac{%
\partial }{\partial x^{i}}-\underset{2}{N}\underset{}{\overset{\left(
b\right) }{_{\left( j\right) i}}}\dfrac{\partial }{\partial p_{j}^{b}}.%
\end{array}
\label{form-of-delta-t-si-x}
\end{equation}

iii) With respect to a transformation of coordinates (\ref{tr-group-dual-jet}%
) on $E^{\ast }$, the coefficients \ $\underset{1}{N}\underset{}{\overset{%
\left( b\right) }{_{\left( j\right) a}}}$\ and $\underset{2}{N}\underset{}{%
\overset{\left( b\right) }{_{\left( j\right) i}}}$\ obey the rules%
\begin{equation}
\begin{array}{l}
\underset{1}{\tilde{N}}\underset{}{\overset{\left( b\right) }{_{\left(
j\right) c}}}\dfrac{\partial \tilde{t}^{c}}{\partial t^{a}}=\underset{1}{N}%
\underset{}{\overset{\left( c\right) }{_{\left( k\right) a}}}\dfrac{\partial 
\tilde{t}^{b}}{\partial t^{c}}\dfrac{\partial x^{k}}{\partial \tilde{x}^{j}}-%
\dfrac{\partial \tilde{p}_{j}^{b}}{\partial t^{a}},\medskip \\ 
\underset{2}{\tilde{N}}\underset{}{\overset{\left( b\right) }{_{\left(
j\right) k}}\dfrac{\partial \tilde{x}^{k}}{\partial x^{i}}}=\underset{2}{N}%
\underset{}{\overset{\left( c\right) }{_{\left( k\right) i}}}\dfrac{\partial 
\tilde{t}^{b}}{\partial t^{c}}\dfrac{\partial x^{k}}{\partial \tilde{x}^{j}}-%
\dfrac{\partial \tilde{p}_{j}^{b}}{\partial x^{i}}.%
\end{array}
\label{tr-rules-nlc}
\end{equation}

iv) To give a nonlinear connection $\mathcal{H}$\ on $E^{\ast }$\ is
equivalent to give a set of local functions $N=\left( \underset{1}{N}%
\underset{}{\overset{\left( b\right) }{_{\left( j\right) a}}},\ \underset{2}{%
N}\underset{}{\overset{\left( b\right) }{_{\left( j\right) i}}}\right) \ $%
which transform by the rules (\ref{tr-rules-nlc}).
\end{theorem}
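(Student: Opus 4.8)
The plan is to exploit the linear isomorphism $\left.\pi^{\ast}\ _{\ast,u^{\ast}}\right\vert_{H_{u^{\ast}}}:H_{u^{\ast}}\rightarrow T_{\pi^{\ast}(u^{\ast})}(\mathcal{T}\times M)$ recorded just above, pulling the natural base $\{\partial/\partial t^{a},\partial/\partial x^{i}\}$ of $T_{(t,x)}(\mathcal{T}\times M)$ back into the horizontal distribution. For (i), I would define $\delta/\delta t^{a}$ and $\delta/\delta x^{i}$ to be the preimages under this isomorphism of $\partial/\partial t^{a}$ and $\partial/\partial x^{i}$; existence and uniqueness are then immediate, since a linear isomorphism carries a basis bijectively onto a basis, and this simultaneously gives (\ref{delta-t-si-x}) and the asserted linear independence.

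For (ii), I expand $\delta/\delta t^{a}$ in the full natural basis $\{\partial/\partial t^{b},\partial/\partial x^{i},\partial/\partial p_{j}^{b}\}$ of $T_{u^{\ast}}E^{\ast}$ and apply $\pi^{\ast}\ _{\ast}$. Since $\pi^{\ast}$ forgets the polymomenta, its differential annihilates every $\partial/\partial p_{j}^{b}$ and fixes $\partial/\partial t^{a}$ and $\partial/\partial x^{i}$; comparing the result with (\ref{delta-t-si-x}) forces the $\partial/\partial t^{a}$-coefficient to equal $1$ and all remaining base-coefficients to vanish, so only vertical terms survive. Renaming their coefficients $-\underset{1}{N}\underset{}{\overset{\left(b\right)}{_{\left(j\right)a}}}$ yields the first equation of (\ref{form-of-delta-t-si-x}), and the same computation applied to $\delta/\delta x^{i}$ gives the second.

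For (iii), the cleanest route is to first show that the horizontal fields themselves transform tensorially. On the base $\mathcal{T}\times M$ the change (\ref{tr-group-dual-jet}) gives $\partial/\partial t^{a}=(\partial\tilde{t}^{b}/\partial t^{a})\,\partial/\partial\tilde{t}^{b}$ (no $x$-contribution, since $\tilde{x}$ is independent of $t$), so from (\ref{delta-t-si-x}) and the injectivity of $\pi^{\ast}\ _{\ast}$ on $H_{u^{\ast}}$ I get $\delta/\delta t^{a}=(\partial\tilde{t}^{b}/\partial t^{a})\,\delta/\delta\tilde{t}^{b}$, and likewise $\delta/\delta x^{i}=(\partial\tilde{x}^{j}/\partial x^{i})\,\delta/\delta\tilde{x}^{j}$. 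Substituting the form (\ref{form-of-delta-t-si-x}) into both sides of these identities, re-expanding the left-hand sides through the laws (\ref{tr-laws-basis}), and equating the coefficients of the linearly independent fields $\partial/\partial\tilde{p}_{j}^{b}$ then produces exactly the rules (\ref{tr-rules-nlc}).

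For (iv), the forward implication is supplied by (i)--(iii). For the converse, given local functions obeying (\ref{tr-rules-nlc}) I define the fields by (\ref{form-of-delta-t-si-x}) and, by reversing the coefficient comparison of step (iii), verify that they obey the tensorial laws $\delta/\delta t^{a}=(\partial\tilde{t}^{b}/\partial t^{a})\,\delta/\delta\tilde{t}^{b}$ and $\delta/\delta x^{i}=(\partial\tilde{x}^{j}/\partial x^{i})\,\delta/\delta\tilde{x}^{j}$; these laws guarantee that $H_{u^{\ast}}:=\mathrm{span}\{\delta/\delta t^{a},\delta/\delta x^{i}\}$ is independent of the chart, hence a genuine distribution $\mathcal{H}$. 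Finally, each $\delta$-field differs from a distinct natural base vector only by vertical terms, so the transition matrix from $\{\partial/\partial t^{a},\partial/\partial x^{i},\partial/\partial p_{j}^{b}\}$ to $\{\delta/\delta t^{a},\delta/\delta x^{i},\partial/\partial p_{j}^{b}\}$ is block-triangular with identity diagonal blocks and therefore invertible, which yields the supplementary decomposition (\ref{hor-distrib}). The main obstacle is purely the index bookkeeping in (iii): one must keep the two coefficient families $\underset{1}{N}$ and $\underset{2}{N}$ apart, carry the mixed upper/lower index pair through the expansions, and correctly match the non-tensorial pieces $\partial\tilde{p}_{j}^{b}/\partial t^{a}$ and $\partial\tilde{p}_{j}^{b}/\partial x^{i}$ arising from (\ref{tr-laws-basis}); isolating the tensorial transformation of the $\delta$-fields first is what keeps this manageable and makes the converse in (iv) nearly automatic.
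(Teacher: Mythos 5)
Your proposal is correct and follows essentially the same route as the paper: you obtain $\dfrac{\delta}{\delta t^{a}}$, $\dfrac{\delta}{\delta x^{i}}$ from the isomorphism $\left.\pi^{\ast}\ _{\ast,u^{\ast}}\right\vert_{H_{u^{\ast}}}$ and the expansion in the natural basis, exactly as in the paper's proof. Your parts (iii) and (iv), where you first derive the tensorial laws $\dfrac{\delta}{\delta t^{a}}=\dfrac{\partial\tilde{t}^{b}}{\partial t^{a}}\dfrac{\delta}{\delta\tilde{t}^{b}}$ and $\dfrac{\delta}{\delta x^{i}}=\dfrac{\partial\tilde{x}^{j}}{\partial x^{i}}\dfrac{\delta}{\delta\tilde{x}^{j}}$ and then compare coefficients of $\dfrac{\partial}{\partial\tilde{p}_{j}^{b}}$, simply make explicit the details the paper compresses into its appeal to the fields being globally defined and into the remark that the decomposition is obvious; there is no gap.
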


\begin{proof}
Let $\dfrac{\delta }{\delta t^{a}},$ $\dfrac{\delta }{\delta x^{i}}\in \chi
\left( E^{\ast }\right) $ be vector fields on $E^{\ast }$, locally expressed
by%
\begin{equation*}
\begin{array}{l}
\dfrac{\delta }{\delta t^{a}}=A_{a}^{b}\dfrac{\partial }{\partial t^{b}}%
+A_{a}^{j}\dfrac{\partial }{\partial x^{j}}+A_{\left( j\right) a}^{\left(
b\right) }\dfrac{\partial }{\partial p_{j}^{b}},\medskip \\ 
\dfrac{\delta }{\delta x^{i}}=X_{i}^{b}\dfrac{\partial }{\partial t^{b}}%
+X_{i}^{j}\dfrac{\partial }{\partial x^{j}}+X_{\left( j\right) i}^{\left(
b\right) }\dfrac{\partial }{\partial p_{j}^{b}},%
\end{array}%
\end{equation*}%
which verify the relations (\ref{delta-t-si-x}). Then, taking into account
the local expression of the map $\pi ^{\ast }{}_{\ast }$, we get%
\begin{eqnarray*}
A_{a}^{b} &=&\delta _{a}^{b},\ A_{a}^{j}=0,\ A_{\left( j\right) a}^{\left(
b\right) }=-\underset{1}{N}\underset{}{\overset{\left( b\right) }{_{\left(
j\right) a}}}, \\
X_{i}^{b} &=&0,\ X_{i}^{j}=\delta _{i}^{j},\ X_{\left( j\right) i}^{\left(
b\right) }=-\underset{2}{N}\underset{}{\overset{\left( b\right) }{_{\left(
j\right) i}}}.
\end{eqnarray*}%
These equalities prove the form (\ref{form-of-delta-t-si-x}) of the vector
fields from Theorem \ref{Th-nlc}, together with their linear independence.
The uniqueness of the coefficients $\underset{1}{N}\underset{}{\overset{%
\left( b\right) }{_{\left( j\right) a}}}\ $and $\underset{2}{N}\underset{}{%
\overset{\left( b\right) }{_{\left( j\right) i}}}$ is obvious.

Because the vector fields $\dfrac{\delta }{\delta t^{a}}$ and $\dfrac{\delta 
}{\delta x^{i}}$ are globally defined, we deduce that a change of
coordinates (\ref{tr-group-dual-jet}) on $E^{\ast }$ produces a
transformation of the coefficients $\underset{1}{N}\underset{}{\overset{%
\left( b\right) }{_{\left( j\right) a}}}$ and $\underset{2}{N}\underset{}{%
\overset{\left( b\right) }{_{\left( j\right) i}}}$ by the rules (\ref%
{tr-rules-nlc}).

Finally, starting with a set of functions $N=\left( \underset{1}{N}\underset{%
}{\overset{\left( b\right) }{_{\left( j\right) a}}},\ \underset{2}{N}%
\underset{}{\overset{\left( b\right) }{_{\left( j\right) i}}}\right) $ which
respect the rules (\ref{tr-rules-nlc})$,$ we can construct the horizontal
distribution\ $\mathcal{H}$\ taking%
\begin{equation*}
H_{u^{\ast }}=Span\left\{ \left. \frac{\delta }{\delta t^{a}}\right\vert
_{u^{\ast }},\left. \frac{\delta }{\delta x^{i}}\right\vert _{u^{\ast
}}\right\} .
\end{equation*}%
The decomposition $T_{u^{\ast }}E^{\ast }=H_{u^{\ast }}\oplus W_{u^{\ast }}$
is obvious now.
\end{proof}

\begin{definition}
i) The geometrical entity$\ \underset{1}{N}=\left( \underset{1}{N}\underset{}%
{\overset{\left( b\right) }{_{\left( j\right) a}}}\right) \ $(resp. $%
\underset{2}{N}=\left( \underset{2}{N}\underset{}{\overset{\left( b\right) }{%
_{\left( j\right) i}}}\right) $) is called a \textbf{temporal} (resp. 
\textbf{spatial}) \textbf{nonlinear connection} on $E^{\ast }.$

ii) The set of the linear independent vector fields%
\begin{equation}
\left\{ \frac{\delta }{\delta t^{a}},\frac{\delta }{\delta x^{i}},\frac{%
\partial }{\partial p_{i}^{a}}\right\} \subset \chi \left( E^{\ast }\right)
\label{ad-basis-nlc}
\end{equation}%
is called the \textbf{adapted basis of vector fields attached to the
nonlinear connection} $N=\left( \underset{1}{N},\underset{2}{N}\right) $.
\end{definition}

\begin{example}
Let us consider that $h_{ab}(t)\ $(resp. $\varphi _{ij}(x)$) is a
semi-Riemannian metric on $\mathcal{T\ }$(resp. $M$) and let $\varkappa
_{bc}^{a}(t)\ $(resp. $\gamma _{jk}^{i}(x)$) be its Christoffel symbols.
Setting%
\begin{equation}
\begin{array}{cc}
\overset{0}{\underset{1}{N}}\underset{}{\overset{\left( b\right) }{_{\left(
j\right) c}}}=\varkappa _{ac}^{b}p_{j}^{a}, & \overset{0}{\underset{2}{N}}%
\underset{}{\overset{\left( b\right) }{_{\left( j\right) k}}}=-\gamma
_{jk}^{i}p_{i}^{b},%
\end{array}
\label{can-nlc-asoc-to-metric}
\end{equation}%
we obtain that%
\begin{equation*}
\overset{0}{\underset{}{N}}=\left( \overset{0}{\underset{1}{N}}\underset{}{%
\overset{\left( b\right) }{_{\left( j\right) c}}},\text{ }\overset{0}{%
\underset{2}{N}}\underset{}{\overset{\left( b\right) }{_{\left( j\right) k}}}%
\right)
\end{equation*}%
is a nonlinear connection on $E^{\ast }=J^{1\ast }\left( \mathcal{T}%
,M\right) $, which is called the \textbf{canonical nonlinear connection on }$%
E^{\ast }$\textbf{\ attached to the pair of semi-Riemannian metrics }$%
h_{ab}(t)$\textbf{\ and }$\varphi _{ij}(x).$
\end{example}

With respect to the coordinate transformations (\ref{tr-group-dual-jet}) the
adapted basis (\ref{ad-basis-nlc}) has its transformation laws extremely
simple, namely (tensorial ones)%
\begin{equation}
\begin{array}{l}
\dfrac{\delta }{\delta t^{a}}=\dfrac{\partial \tilde{t}^{b}}{\partial t^{a}}%
\dfrac{\delta }{\delta \tilde{t}^{b}},\medskip \\ 
\dfrac{\delta }{\delta x^{i}}=\dfrac{\partial \tilde{x}^{j}}{\partial x^{i}}%
\dfrac{\delta }{\delta \tilde{x}^{j}},\medskip \\ 
\dfrac{\partial }{\partial p_{i}^{a}}=\dfrac{\partial \tilde{t}^{b}}{%
\partial t^{a}}\dfrac{\partial x^{i}}{\partial \tilde{x}^{j}}\dfrac{\partial 
}{\partial \tilde{p}_{j}^{b}},%
\end{array}
\label{tr-laws-ad-basis}
\end{equation}%
in contrast with the transformations (\ref{tr-laws-basis}).

The dual basis of the adapted basis (\ref{ad-basis-nlc}) is given by%
\begin{equation}
\left\{ dt^{a},dx^{i},\delta p_{i}^{a}\right\} \subset \chi ^{\ast }\left(
E^{\ast }\right)  \label{ad-cobasis-nlc}
\end{equation}%
where%
\begin{equation}
\delta p_{i}^{a}=dp_{i}^{a}+\underset{1}{N}\overset{(a)}{\underset{}{%
_{\left( i\right) b}}}dt^{b}+\underset{2}{N}\overset{(a)}{\underset{}{%
_{\left( i\right) j}}}dx^{j}.  \label{expr-delta-p}
\end{equation}

\begin{definition}
The dual basis of covector fields given by (\ref{ad-cobasis-nlc}) and (\ref%
{expr-delta-p}) is called the \textbf{adapted cobasis of covector fields
attached to the nonlinear connection} $N=\left( \underset{1}{N},\underset{2}{%
N}\right) $.
\end{definition}

It is obvious that we always have%
\begin{equation*}
\begin{array}{lll}
\dfrac{\delta }{\delta t^{a}}\text{ }\rfloor \ dt^{b}=\delta _{a}^{b}, & 
\dfrac{\delta }{\delta t^{a}}\text{ }\rfloor \ dx^{j}=0, & \dfrac{\delta }{%
\delta t^{a}}\text{ }\rfloor \ \delta p_{j}^{b}=0,\medskip \\ 
\dfrac{\delta }{\delta x^{i}}\text{ }\rfloor \ dt^{b}=0, & \dfrac{\delta }{%
\delta x^{i}}\text{ }\rfloor \ dx^{j}=\delta _{i}^{j}, & \dfrac{\delta }{%
\delta x^{i}}\text{ }\rfloor \ \delta p_{j}^{b}=0,\medskip \\ 
\dfrac{\partial }{\partial p_{i}^{a}}\text{ }\rfloor \ dt^{b}=0, & \dfrac{%
\partial }{\partial p_{i}^{a}}\text{ }\rfloor \ dx^{j}=0, & \dfrac{\partial 
}{\partial p_{i}^{a}}\text{ }\rfloor \ \delta p_{j}^{b}=\delta
_{a}^{b}\delta _{j}^{i}.\medskip%
\end{array}%
\end{equation*}%
Moreover, with respect to (\ref{tr-group-dual-jet}), we obtain the following
tensorial transformation rules:%
\begin{equation}
\begin{array}{l}
dt^{a}=\dfrac{\partial t^{a}}{\partial \tilde{t}^{b}}d\tilde{t}^{b},\medskip
\\ 
dx^{i}=\dfrac{\partial x^{i}}{\partial \tilde{x}^{j}}d\tilde{x}^{j},\medskip
\\ 
\delta p_{i}^{a}=\dfrac{\partial t^{a}}{\partial \tilde{t}^{b}}\dfrac{%
\partial \tilde{x}^{j}}{\partial x^{i}}\delta \tilde{p}_{j}^{b}.%
\end{array}
\label{tr-laws-ad-cobasis}
\end{equation}

As a consequence of the preceding assertions, we find the following simple
result:

\begin{proposition}
i) The Lie algebra $\chi \left( E^{\ast }\right) $ of the vector fields on $%
E^{\ast }$ decomposes in the direct sum%
\begin{equation*}
\chi \left( E^{\ast }\right) =\chi \left( \mathcal{H}_{\mathcal{T}}\right)
\oplus \chi \left( \mathcal{H}_{M}\right) \oplus \chi \left( \mathcal{W}%
\right) ,
\end{equation*}%
where%
\begin{equation*}
\chi \left( \mathcal{H}_{\mathcal{T}}\right) =Span\left\{ \frac{\delta }{%
\delta t^{a}}\right\} ,\text{ }\chi \left( \mathcal{H}_{M}\right)
=Span\left\{ \frac{\delta }{\delta x^{i}}\right\} ,\text{ }\chi \left( 
\mathcal{W}\right) =Span\left\{ \frac{\partial }{\partial p_{i}^{a}}\right\}
.
\end{equation*}

ii) The Lie algebra $\chi ^{\ast }\left( E^{\ast }\right) $ of the covector
fields on $E^{\ast }$ decomposes in the direct sum%
\begin{equation*}
\chi ^{\ast }\left( E^{\ast }\right) =\chi ^{\ast }\left( \mathcal{H}_{%
\mathcal{T}}\right) \oplus \chi ^{\ast }\left( \mathcal{H}_{M}\right) \oplus
\chi ^{\ast }\left( \mathcal{W}\right) ,
\end{equation*}%
where%
\begin{equation*}
\chi ^{\ast }\left( \mathcal{H}_{\mathcal{T}}\right) =Span\left\{
dt^{a}\right\} ,\text{ }\chi ^{\ast }\left( \mathcal{H}_{M}\right)
=Span\left\{ dx^{i}\right\} ,\text{ }\chi ^{\ast }\left( \mathcal{W}\right)
=Span\left\{ \delta p_{i}^{a}\right\} .
\end{equation*}
\end{proposition}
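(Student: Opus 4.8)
The plan is to obtain both decompositions as immediate consequences of the fact that the adapted basis \eqref{ad-basis-nlc} and the adapted cobasis \eqref{ad-cobasis-nlc} are, respectively, bases of the $C^{\infty}(E^{\ast})$-modules $\chi(E^{\ast})$ and $\chi^{\ast}(E^{\ast})$. No new computation is really needed: everything follows from Theorem \ref{Th-nlc} together with the duality relations recorded just before the statement.

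First I would treat part i). Since $E^{\ast}$ is coordinated by $(t^{a},x^{i},p_{i}^{a})$ with $a=\overline{1,m}$ and $i=\overline{1,n}$, one has $\dim_{\mathbb{R}}T_{u^{\ast}}E^{\ast}=m+n+mn$, which is exactly the number of fields in \eqref{ad-basis-nlc}. By Theorem \ref{Th-nlc} the horizontal fields $\delta/\delta t^{a}$, $\delta/\delta x^{i}$ are linearly independent and span $H_{u^{\ast}}$, while $\partial/\partial p_{i}^{a}$ span $W_{u^{\ast}}$, and the splitting $T_{u^{\ast}}E^{\ast}=H_{u^{\ast}}\oplus W_{u^{\ast}}$ was established there. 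Moreover the passage from the natural basis $\{\partial/\partial t^{a},\partial/\partial x^{i},\partial/\partial p_{i}^{a}\}$ to the adapted one is given, via \eqref{form-of-delta-t-si-x}, by a triangular matrix with unit diagonal blocks and smooth off-diagonal entries (the coefficients of $N$); hence it is everywhere invertible and \eqref{ad-basis-nlc} is a basis of the module $\chi(E^{\ast})$. Consequently every $X\in\chi(E^{\ast})$ admits a unique expansion $X=X_{\mathcal{T}}^{a}\,\delta/\delta t^{a}+X_{M}^{i}\,\delta/\delta x^{i}+X_{(i)}^{(a)}\,\partial/\partial p_{i}^{a}$ with smooth coefficients. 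The three summands lie in $\chi(\mathcal{H}_{\mathcal{T}})$, $\chi(\mathcal{H}_{M})$ and $\chi(\mathcal{W})$ respectively, so these submodules span $\chi(E^{\ast})$, and the uniqueness of the coefficients is precisely the triviality of their pairwise intersections, i.e. the direct-sum condition.

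For part ii) I would run the identical argument on the adapted cobasis \eqref{ad-cobasis-nlc}. The only thing to check is that $\{dt^{a},dx^{i},\delta p_{i}^{a}\}$ is a basis of $\chi^{\ast}(E^{\ast})$, and this is delivered by the interior-product table displayed immediately before the proposition: the pairings of the adapted basis with $\{dt^{a},dx^{i},\delta p_{i}^{a}\}$ form the identity matrix, so the adapted cobasis is exactly the dual basis of \eqref{ad-basis-nlc} and is therefore itself a basis. Splitting it into the groups $\{dt^{a}\}$, $\{dx^{i}\}$, $\{\delta p_{i}^{a}\}$ then yields $\chi^{\ast}(E^{\ast})=\chi^{\ast}(\mathcal{H}_{\mathcal{T}})\oplus\chi^{\ast}(\mathcal{H}_{M})\oplus\chi^{\ast}(\mathcal{W})$ in the same manner.

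There is no genuine obstacle here; the single point deserving a word is global well-definedness, namely that the three submodules do not depend on the chart. This is guaranteed by the tensorial transformation laws \eqref{tr-laws-ad-basis} and \eqref{tr-laws-ad-cobasis}, which show that each of $\mathrm{Span}\{\delta/\delta t^{a}\}$, $\mathrm{Span}\{\delta/\delta x^{i}\}$, $\mathrm{Span}\{\partial/\partial p_{i}^{a}\}$ (and dually for the cobasis) is invariant under \eqref{tr-group-dual-jet}; hence the decompositions are intrinsic. I would also note that the direct sum is meant in the sense of $C^{\infty}(E^{\ast})$-modules (equivalently, pointwise as $\mathbb{R}$-vector spaces), the horizontal pieces being distributions rather than Lie subalgebras.
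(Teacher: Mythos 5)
Your proposal is correct and follows exactly the route the paper intends: the paper offers no explicit proof, presenting the proposition as an immediate consequence of the adapted basis (\ref{ad-basis-nlc}), its dual cobasis (\ref{ad-cobasis-nlc}), and the splitting $T_{u^{\ast }}E^{\ast }=H_{u^{\ast }}\oplus W_{u^{\ast }}$ from Theorem \ref{Th-nlc}, which is precisely what you spell out (including the chart-independence via the tensorial laws (\ref{tr-laws-ad-basis}) and (\ref{tr-laws-ad-cobasis})). The only cosmetic remark is that for a three-term direct sum the condition is that each summand meets the sum of the other two trivially, not merely pairwise trivial intersections; your appeal to uniqueness of the basis coefficients already gives this stronger statement, so the argument stands as written.
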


\begin{definition}
The distribution $\mathcal{H}_{\mathcal{T}}$ (resp. $\mathcal{H}_{M}$) is
called the \textbf{$\mathcal{T}$-horizontal distribution} (resp. $M$\textbf{%
-horizontal distribution}) on $E^{\ast }$.
\end{definition}

Let $h_{\mathcal{T}},$ $h_{M}$ and $w$ be the $\mathcal{T}$- horizontal, $M$%
-horizontal and vertical, respectively, canonical projections associated to
the decomposition of vector fields on $E^{\ast }$. Then, it is obvious that
we have the relations:%
\begin{equation}
\begin{array}{c}
h_{\mathcal{T}}+h_{M}+w=I,\ h_{\mathcal{T}}^{2}=h_{\mathcal{T}},\
h_{M}^{2}=h_{M},\ w^{2}=w,\medskip \\ 
h_{\mathcal{T}}\circ h_{M}=h_{M}\circ h_{\mathcal{T}}=0,\text{ }h_{\mathcal{T%
}}\circ w=w\circ h_{\mathcal{T}}=0,\medskip \\ 
h_{M}\circ w=w\circ h_{M}=0.%
\end{array}
\label{projections}
\end{equation}

Starting with a vector field $X\in \chi \left( E^{\ast }\right) $, we denote%
\begin{equation*}
X^{\mathcal{H}_{\mathcal{T}}}=h_{\mathcal{T}}X,\text{ }X^{\mathcal{H}%
_{M}}=h_{M}X,\text{ }X^{\mathcal{W}}=wX.
\end{equation*}%
Therefore, we have the unique decomposition%
\begin{equation}
X=X^{\mathcal{H}_{\mathcal{T}}}+X^{\mathcal{H}_{M}}+X^{\mathcal{W}},
\label{decomp-vect-fields}
\end{equation}%
where, using the adapted basis (\ref{ad-basis-nlc}), we get%
\begin{equation*}
X^{\mathcal{H}_{\mathcal{T}}}=X^{a}\frac{\delta }{\delta t^{a}},\text{ }X^{%
\mathcal{H}_{M}}=X^{i}\frac{\delta }{\delta x^{i}},\text{ }X^{\mathcal{W}%
}=X_{\left( i\right) }^{\left( a\right) }\frac{\partial }{\partial p_{i}^{a}}%
.
\end{equation*}%
By means of (\ref{tr-laws-ad-basis}), we deduce that%
\begin{equation*}
\tilde{X}^{a}=\frac{\partial \tilde{t}^{a}}{\partial t^{b}}X^{b},\text{ }%
\tilde{X}^{i}=\frac{\partial \tilde{x}^{i}}{\partial x^{j}}X^{j},\text{ }%
\tilde{X}_{\left( i\right) }^{\left( a\right) }=\frac{\partial \tilde{t}^{a}%
}{\partial t^{b}}\frac{\partial x^{j}}{\partial \tilde{x}^{i}}X_{\left(
j\right) }^{\left( b\right) }.
\end{equation*}

Now, we can give the following important results:

\begin{proposition}
\label{H-integrability}The horizontal distribution $\mathcal{H}$ is
integrable if and only if for any vector fields $X,Y\in \chi \left( E^{\ast
}\right) $ we have%
\begin{equation*}
\left[ X^{\mathcal{H}_{\mathcal{T}}},Y^{\mathcal{H}_{\mathcal{T}}}\right] ^{%
\mathcal{W}}=0,\text{ }\left[ X^{\mathcal{H}_{\mathcal{T}}},Y^{\mathcal{H}%
_{M}}\right] ^{\mathcal{W}}=0,\text{ }\left[ X^{\mathcal{H}_{M}},Y^{\mathcal{%
H}_{M}}\right] ^{\mathcal{W}}=0.
\end{equation*}
\end{proposition}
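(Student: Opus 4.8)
The plan is to reduce the statement to the classical Frobenius integrability criterion. By the Frobenius theorem, the distribution $\mathcal{H}$ is integrable if and only if it is involutive, that is, if and only if $[Z,W]\in\chi(\mathcal{H})$ for all $Z,W\in\chi(\mathcal{H})$. Since the nonlinear connection $N$ furnishes the direct decomposition $\chi(E^{\ast})=\chi(\mathcal{H})\oplus\chi(\mathcal{W})$, with $\chi(\mathcal{H})=\chi(\mathcal{H}_{\mathcal{T}})\oplus\chi(\mathcal{H}_{M})$ and vertical projection $w$, a vector field $V$ lies in $\chi(\mathcal{H})$ precisely when its vertical part vanishes, i.e. $V^{\mathcal{W}}=wV=0$. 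Hence the first step is to record the equivalence: $\mathcal{H}$ is integrable if and only if $[Z,W]^{\mathcal{W}}=0$ for every pair $Z,W\in\chi(\mathcal{H})$.

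Next I would split each horizontal field into its $\mathcal{T}$-horizontal and $M$-horizontal components. For $Z,W\in\chi(\mathcal{H})$ we have $Z=Z^{\mathcal{H}_{\mathcal{T}}}+Z^{\mathcal{H}_{M}}$ and $W=W^{\mathcal{H}_{\mathcal{T}}}+W^{\mathcal{H}_{M}}$, so by the $\mathbb{R}$-bilinearity of the Lie bracket $[Z,W]^{\mathcal{W}}$ expands as a sum of the four terms $[Z^{\mathcal{H}_{\mathcal{T}}},W^{\mathcal{H}_{\mathcal{T}}}]^{\mathcal{W}}$, $[Z^{\mathcal{H}_{\mathcal{T}}},W^{\mathcal{H}_{M}}]^{\mathcal{W}}$, $[Z^{\mathcal{H}_{M}},W^{\mathcal{H}_{\mathcal{T}}}]^{\mathcal{W}}$ and $[Z^{\mathcal{H}_{M}},W^{\mathcal{H}_{M}}]^{\mathcal{W}}$. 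The antisymmetry of the bracket lets me fold the third term into the second, so the vanishing of $[Z,W]^{\mathcal{W}}$ for all $Z,W$ is controlled by exactly the three types of brackets appearing in the statement. Because the canonical projections $h_{\mathcal{T}}$ and $h_{M}$ map $\chi(E^{\ast})$ onto $\chi(\mathcal{H}_{\mathcal{T}})$ and $\chi(\mathcal{H}_{M})$ respectively, letting $X,Y$ range over all of $\chi(E^{\ast})$ produces, via $X^{\mathcal{H}_{\mathcal{T}}}=h_{\mathcal{T}}X$ and the like, every pair of sections of $\mathcal{H}_{\mathcal{T}}$ and $\mathcal{H}_{M}$; this is what allows the three stated conditions (quantified over arbitrary $X,Y$) to be identified with the involutivity of $\mathcal{H}$.

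The forward direction is then immediate: if $\mathcal{H}$ is integrable, each of the three brackets lies in $\chi(\mathcal{H})$ and so has zero vertical part. For the converse I would reassemble an arbitrary bracket $[Z,W]^{\mathcal{W}}$ from the three vanishing pieces and conclude $[Z,W]\in\chi(\mathcal{H})$, whence involutivity. The only point requiring care is that $[Z,W]^{\mathcal{W}}$ genuinely depends on $Z$ and $W$ only through their horizontal values, so that the stated conditions are well posed independently of how $X,Y$ are chosen to realize given horizontal projections; this follows from the identity $[fZ,W]^{\mathcal{W}}=f[Z,W]^{\mathcal{W}}-(Wf)\,Z^{\mathcal{W}}=f[Z,W]^{\mathcal{W}}$ valid for horizontal $Z$ (since $Z^{\mathcal{W}}=0$), which shows that $w\circ[\,\cdot\,,\,\cdot\,]$ restricted to horizontal fields is tensorial. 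I expect this tensoriality check to be the only genuinely substantive step; the remaining work is the routine bookkeeping of the decomposition above.
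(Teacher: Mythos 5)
Your proof is correct and follows essentially the same route as the paper's: both reduce integrability of $\mathcal{H}$ to Frobenius involutivity and then decompose horizontal fields into their $\mathcal{H}_{\mathcal{T}}$- and $\mathcal{H}_{M}$-components, so that involutivity becomes exactly the vanishing of the three vertical projections $w[\cdot,\cdot]$ of the brackets. Your concluding tensoriality check of $w\circ[\cdot,\cdot]$ on horizontal fields is valid but not actually needed, since the three conditions are stated directly in terms of the uniquely determined projections $X^{\mathcal{H}_{\mathcal{T}}}$, $X^{\mathcal{H}_{M}}$, so no well-posedness issue arises.
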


\begin{proof}
Indeed, the Poisson bracket between two $\mathcal{T}$-horizontal vector
fields $X^{\mathcal{H}_{\mathcal{T}}},$ $Y^{\mathcal{H}_{\mathcal{T}}}$
(resp. $M$-horizontal vector fields $X^{\mathcal{H}_{M}},$ $Y^{\mathcal{H}%
_{M}}$) or between a $\mathcal{T}$-horizontal vector field $X^{\mathcal{H}_{%
\mathcal{T}}}$ and an $M$-horizontal vector field $Y^{\mathcal{H}_{M}}$
belongs to the horizontal distribution $\mathcal{H}$ if and only if the
preceding three equations hold good.
\end{proof}

\begin{proposition}
The vertical distribution $\mathcal{W}$ is always integrable.
\end{proposition}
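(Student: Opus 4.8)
The plan is to show that the vertical distribution $\mathcal{W}$ is involutive, and then invoke the Frobenius theorem to conclude integrability. Since $\mathcal{W}=\mathrm{Span}\left\{\dfrac{\partial}{\partial p_{i}^{a}}\right\}$ is spanned by coordinate vector fields associated with the fibre coordinates $p_{i}^{a}$, the natural strategy is to verify that the Poisson bracket of any two basis vector fields of $\mathcal{W}$ lies again in $\mathcal{W}$. The key computation reduces to evaluating $\left[\dfrac{\partial}{\partial p_{i}^{a}},\dfrac{\partial}{\partial p_{j}^{b}}\right]$.

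First I would recall that the $\dfrac{\partial}{\partial p_{i}^{a}}$ are the coordinate vector fields of the chart $(t^{a},x^{i},p_{i}^{a})$ on $E^{\ast}$. A classical fact is that coordinate vector fields commute, so $\left[\dfrac{\partial}{\partial p_{i}^{a}},\dfrac{\partial}{\partial p_{j}^{b}}\right]=0$ for all admissible indices. Hence the bracket of any two sections of $\mathcal{W}$, being an $\mathcal{F}(E^{\ast})$-combination of such commutators together with terms produced by differentiating the coefficient functions along the vertical directions, remains vertical; in particular it belongs to $\chi(\mathcal{W})=\mathrm{Span}\left\{\dfrac{\partial}{\partial p_{i}^{a}}\right\}$.

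More precisely, I would write two arbitrary vertical vector fields as $X^{\mathcal{W}}=X_{(i)}^{(a)}\dfrac{\partial}{\partial p_{i}^{a}}$ and $Y^{\mathcal{W}}=Y_{(j)}^{(b)}\dfrac{\partial}{\partial p_{j}^{b}}$, and expand the Poisson bracket using the Leibniz rule. Using the vanishing of $\left[\dfrac{\partial}{\partial p_{i}^{a}},\dfrac{\partial}{\partial p_{j}^{b}}\right]$, the bracket collapses to
\begin{equation*}
\left[X^{\mathcal{W}},Y^{\mathcal{W}}\right]=\left(X_{(i)}^{(a)}\dfrac{\partial Y_{(j)}^{(b)}}{\partial p_{i}^{a}}-Y_{(i)}^{(a)}\dfrac{\partial X_{(j)}^{(b)}}{\partial p_{i}^{a}}\right)\dfrac{\partial}{\partial p_{j}^{b}},
\end{equation*}
which is manifestly a section of $\mathcal{W}$. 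This establishes involutivity.

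The argument is almost entirely routine, so there is no serious obstacle; the only point deserving care is the justification that the coordinate vector fields $\dfrac{\partial}{\partial p_{i}^{a}}$ genuinely commute, which follows from the symmetry of mixed second partial derivatives (Clairaut's theorem) applied to smooth functions on $E^{\ast}$. Once involutivity is verified, the conclusion that $\mathcal{W}$ is integrable is immediate by the Frobenius theorem, and one notes that the integral manifolds are precisely the fibres of $\pi^{\ast}:E^{\ast}\rightarrow\mathcal{T}\times M$, consistent with the fact that $\mathcal{W}=\mathrm{Ker}\,\pi^{\ast}{}_{\ast}$ was defined fibrewise. Thus the statement holds independently of any choice of nonlinear connection $N$, which explains the word \emph{always}.
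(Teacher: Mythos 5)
Your proof is correct and takes essentially the same route the paper has in mind: the paper states this proposition without a separate proof, but the key fact is exactly the vanishing bracket $\left[ \dfrac{\partial }{\partial p_{j}^{b}},\dfrac{\partial }{\partial p_{k}^{c}}\right] =0$ recorded among the Poisson brackets in (\ref{Poisson brackets}), which combined with the Frobenius theorem is precisely your involutivity argument. Your closing remark that the integral manifolds are the fibres of $\pi ^{\ast }$ is a correct and natural addition, consistent with the paper's definition of $\mathcal{W}$ as $Ker$ $\pi ^{\ast }\ _{\ast }$, and it rightly explains why the result is independent of the nonlinear connection $N$.
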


A similar theory can be done for $1$-forms. With respect to the
decomposition of covector fields on $E^{\ast }$, any $1$-form $\omega \in
\chi ^{\ast }\left( E^{\ast }\right) $ can be uniquely written in the form%
\begin{equation}
\omega =\omega ^{\mathcal{H}_{\mathcal{T}}}+\omega ^{\mathcal{H}_{M}}+\omega
^{\mathcal{W}},  \label{decomp-covect-fields}
\end{equation}%
where%
\begin{equation*}
\omega ^{\mathcal{H}_{\mathcal{T}}}=\omega \circ h_{\mathcal{T}},\text{ }%
\omega ^{\mathcal{H}_{M}}=\omega \circ h_{M},\text{ }\omega ^{\mathcal{W}%
}=\omega \circ w.
\end{equation*}%
In the adapted cobasis (\ref{ad-cobasis-nlc}), we have in fact%
\begin{equation*}
\omega =\omega _{a}dt^{a}+\omega _{i}dx^{i}+\omega _{\left( a\right)
}^{\left( i\right) }\delta p_{i}^{a}.
\end{equation*}%
The components $\omega _{a},$ $\omega _{i},$ $\omega _{\left( a\right)
}^{\left( i\right) }$ are transformed by (\ref{tr-group-dual-jet}) as
follows:%
\begin{equation*}
\omega _{a}=\frac{\partial \tilde{t}^{b}}{\partial t^{a}}\tilde{\omega}_{b},%
\text{ }\omega _{i}=\frac{\partial \tilde{x}^{j}}{\partial x^{i}}\tilde{%
\omega}_{j},\text{ }\omega _{\left( a\right) }^{\left( i\right) }=\frac{%
\partial \tilde{t}^{b}}{\partial t^{a}}\frac{\partial x^{i}}{\partial \tilde{%
x}^{j}}\omega _{\left( b\right) }^{\left( j\right) }.
\end{equation*}

\section{The algebra of distinguished tensor fields}

\hspace{4mm} In the study of differential geometry of the dual vector bundle
of $1$-jets $E^{\ast }=J^{1\ast }\left( \mathcal{T},M\right) $, a central
role is played by the \textit{distinguished tensors} or, briefly, $d$\textit{%
-tensors}.

\begin{definition}
A geometrical object $T=\left( T_{bj\left( c\right) \left( l\right)
...}^{ai\left( k\right) \left( d\right) ...}\right) $ on $E^{\ast }$ which,
with respect to a transformation of coordinates (\ref{tr-group-dual-jet}) on 
$E^{\ast }$, verifies the transformation rules%
\begin{equation*}
T_{bj\left( c\right) \left( l\right) ...}^{ai\left( k\right) \left( d\right)
...}=\tilde{T}_{fq\left( g\right) \left( s\right) ...}^{ep\left( r\right)
\left( h\right) ...}\frac{\partial t^{a}}{\partial \tilde{t}^{e}}\frac{%
\partial \tilde{t}^{f}}{\partial t^{b}}\frac{\partial x^{i}}{\partial \tilde{%
x}^{p}}\frac{\partial \tilde{x}^{q}}{\partial x^{j}}\left( \frac{\partial
x^{k}}{\partial \tilde{x}^{r}}\frac{\partial \tilde{t}^{g}}{\partial t^{c}}%
\right) \left( \frac{\partial t^{d}}{\partial \tilde{t}^{h}}\frac{\partial 
\tilde{x}^{s}}{\partial x^{l}}\right) ...
\end{equation*}%
is called a \textbf{distinguished tensor field} or a \textbf{d-tensor}.
\end{definition}

\begin{remark}
It is obvious that the vector fields $\delta /\delta t^{a},$ $\delta /\delta
x^{i},$ $\partial /\partial p_{i}^{a}$ of the adapted basis are $d$-vector
fields on $E^{\ast }$, as the adapted components $X^{a},$ $X^{i},$ $%
X_{\left( i\right) }^{\left( a\right) }$ of a vector field $X\in \chi \left(
E^{\ast }\right) $. Also, the covector fields $dt^{a},$ $dx^{i},$ $\delta
p_{i}^{a}$ of the adapted cobasis are $d$-covector fields on $E^{\ast }$, as
the adapted components $\omega _{a},$ $\omega _{i},$ $\omega _{\left(
a\right) }^{\left( i\right) }$ of an 1-form $\omega \in \chi ^{\ast }\left(
E^{\ast }\right) $. It follows that the set%
\begin{equation*}
\left\{ 1,\dfrac{\delta }{\delta t^{a}},\dfrac{\delta }{\delta x^{i}},\dfrac{%
\partial }{\partial p_{i}^{a}},dt^{a},dx^{i},\delta p_{i}^{a}\right\}
\end{equation*}%
generates the algebra of the $d$-tensor fields over the ring of functions $%
\mathcal{F}\left( E^{\ast }\right) $.
\end{remark}

\begin{example}
i) If $H:E^{\ast }\rightarrow \mathbb{R}$ is a Hamiltonian function
depending on the polymomenta $p_{i}^{a}$, then the local components%
\begin{equation*}
G_{\left( a\right) \left( b\right) }^{\left( i\right) \left( j\right) }=%
\frac{1}{2}\frac{\partial ^{2}H}{\partial p_{i}^{a}\partial p_{j}^{b}},
\end{equation*}%
represent a d-tensor field $\mathbb{G}=\left( G_{\left( a\right) \left(
b\right) }^{\left( i\right) \left( j\right) }\right) $. If $\mathcal{T}=%
\mathbb{R}$ and $H$ is a regular Hamiltonian function, then the d-tensor
field $\mathbb{G}$ can be regarded as the fundamental metrical $d$-tensor $%
g^{ij}\left( t,x,p\right) $ from the rheonomic Hamilton geometry.

ii) Let us consider the d-tensor field $\mathbb{C}^{\ast }=\left( \mathbb{C}%
_{\left( i\right) }^{\left( a\right) }\right) $, where $\mathbb{C}_{\left(
i\right) }^{\left( a\right) }=p_{i}^{a}.$ Particularly, for $\mathcal{T}=%
\mathbb{R},$ this d-tensor can be regarded as the classical
Liouville-Hamilton vector field%
\begin{equation*}
\mathbb{C}^{\ast }=p_{i}\dfrac{\partial }{\partial p_{i}}
\end{equation*}%
on the cotangent bundle $T^{\ast }M$, which is used in the Hamilton geometry
[17]. Consequently, our d-tensor field $\mathbb{C}^{\ast }$ on $E^{\ast }$
is called \textbf{the Liouville-Hamilton d-tensor field of polymomenta}.

iii) Let $\varphi _{ij}(x)$ be a semi-Riemannian metric on the spatial
manifold $M$. The geometrical object $\mathbb{H}=\left( H_{\left( i\right)
jk}^{\left( a\right) }\right) ,$ where $H_{\left( i\right) jk}^{\left(
a\right) }=\varphi _{ij}p_{k}^{a},$ is a d-tensor field on $E^{\ast }$,
which is called \textbf{the polymomentum Hamilton d-tensor attached to the
semi-Riemannian metric }$\varphi _{ij}(x)$.

iv) Let $h_{ab}(t)$ be a semi-Riemannian metric on the temporal manifold $%
\mathcal{T}.$ We can construct the $d$-tensor field $\mathbb{J}=\left(
J_{\left( a\right) bj}^{\left( i\right) }\right) ,$ where 
\begin{equation*}
J_{\left( a\right) bj}^{\left( i\right) }=h_{ab}\delta _{j}^{i}.
\end{equation*}%
The d-tensor field $\mathbb{J}$ is called the \textbf{d-tensor of }$h$%
\textbf{-normalization }on the dual $1$-jet vector bundle $E^{\ast }$.
\end{example}

\section{Poisson brackets. The almost product structure $\mathbb{P}$}

\hspace{4mm} In applications, the Poisson brackets of the d-vector fields $%
\left\{ \dfrac{\delta }{\delta t^{a}},\dfrac{\delta }{\delta x^{i}},\dfrac{%
\partial }{\partial p_{i}^{a}}\right\} $ from the adapted basis are very
important. By a direct calculation, we obtain

\begin{proposition}
The Poisson brackets of the $d$-vector fields of the adapted basis (\ref%
{ad-basis-nlc}) are given by%
\begin{equation}
\begin{array}{ll}
\left[ \dfrac{\delta }{\delta t^{b}},\dfrac{\delta }{\delta t^{c}}\right]
=R_{\left( i\right) bc}^{\left( a\right) }\dfrac{\partial }{\partial
p_{i}^{a}}, & \left[ \dfrac{\delta }{\delta t^{b}},\dfrac{\delta }{\delta
x^{k}}\right] =R_{\left( i\right) bk}^{\left( a\right) }\dfrac{\partial }{%
\partial p_{i}^{a}},\medskip \\ 
\left[ \dfrac{\delta }{\delta t^{b}},\dfrac{\partial }{\partial p_{k}^{c}}%
\right] =B_{\left( i\right) b\left( c\right) }^{\left( a\right) \ \left(
k\right) }\dfrac{\partial }{\partial p_{i}^{a}}, & \left[ \dfrac{\delta }{%
\delta x^{j}},\dfrac{\delta }{\delta x^{k}}\right] =R_{\left( i\right)
jk}^{\left( a\right) }\dfrac{\partial }{\partial p_{i}^{a}},\medskip \\ 
\left[ \dfrac{\delta }{\delta x^{j}},\dfrac{\partial }{\partial p_{k}^{c}}%
\right] =B_{\left( i\right) j\left( c\right) }^{\left( a\right) \ \left(
k\right) }\dfrac{\partial }{\partial p_{i}^{a}}, & \left[ \dfrac{\partial }{%
\partial p_{j}^{b}},\dfrac{\partial }{\partial p_{k}^{c}}\right] =0,%
\end{array}
\label{Poisson brackets}
\end{equation}%
where, if $\underset{1}{\overset{}{N}}\underset{}{\overset{}{_{\left(
i\right) b}^{(a)}}}$ and $\underset{2}{\overset{}{N}}\underset{}{\overset{%
_{{}}}{_{\left( i\right) j}^{(a)}}}$ are the local coefficients of the given
nonlinear connection $N$, then%
\begin{equation}
\begin{array}{l}
R_{\left( i\right) bc}^{\left( a\right) }=\dfrac{\delta \underset{1}{\overset%
{}{N}}\underset{}{\overset{}{_{\left( i\right) b}^{(a)}}}}{\delta t^{c}}-%
\dfrac{\delta \underset{1}{\overset{}{N}}\underset{}{\overset{}{_{\left(
i\right) c}^{(a)}}}}{\delta t^{b}},\medskip \\ 
R_{\left( i\right) bk}^{\left( a\right) }=\dfrac{\delta \underset{1}{\overset%
{}{N}}\underset{}{\overset{}{_{\left( i\right) b}^{(a)}}}}{\delta x^{k}}-%
\dfrac{\delta \underset{2}{\overset{}{N}}\underset{}{\overset{_{{}}}{%
_{\left( i\right) k}^{(a)}}}}{\delta t^{b}},\medskip \\ 
R_{\left( i\right) jk}^{\left( a\right) }=\dfrac{\delta \underset{2}{\overset%
{}{N}}\underset{}{\overset{_{{}}}{_{\left( i\right) j}^{(a)}}}}{\delta x^{k}}%
-\dfrac{\delta \underset{2}{\overset{}{N}}\underset{}{\overset{_{{}}}{%
_{\left( i\right) k}^{(a)}}}}{\delta x^{j}},\medskip \\ 
B_{\left( i\right) b\left( c\right) }^{\left( a\right) \ \left( k\right) }=%
\dfrac{\partial \underset{1}{\overset{}{N}}\underset{}{\overset{}{_{\left(
i\right) b}^{(a)}}}}{\partial p_{k}^{c}},\quad B_{\left( i\right) j\left(
c\right) }^{\left( a\right) \ \left( k\right) }=\dfrac{\partial \underset{2}{%
\overset{}{N}}\underset{}{\overset{_{{}}}{_{\left( i\right) j}^{(a)}}}}{%
\partial p_{k}^{c}}.%
\end{array}
\label{Formulas-Poisson-brackets}
\end{equation}
\end{proposition}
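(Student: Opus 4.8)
The plan is to compute every bracket directly from the coordinate expressions (\ref{form-of-delta-t-si-x}) for $\delta/\delta t^{a}$ and $\delta/\delta x^{i}$, using only two elementary ingredients: the fact that the natural fields $\partial/\partial t^{a}$, $\partial/\partial x^{i}$ and $\partial/\partial p_{i}^{a}$ commute pairwise, and the Leibniz rule for Lie brackets $\left[ fX,gY\right] =fg\left[ X,Y\right] +f(Xg)Y-g(Yf)X$, applied to the connection coefficients $\underset{1}{N}$ and $\underset{2}{N}$, which are the only functions occurring as coefficients.

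Before any computation I would record that each bracket in the statement is purely \emph{vertical}, i.e. a multiple of $\partial/\partial p_{i}^{a}$ alone. This follows at once from the naturality of the Lie bracket under the projection $\pi^{\ast}$: since $\delta/\delta t^{a}$ and $\delta/\delta x^{i}$ are $\pi^{\ast}$-related to $\partial/\partial t^{a}$ and $\partial/\partial x^{i}$ by (\ref{delta-t-si-x}), while $\partial/\partial p_{i}^{a}$ is $\pi^{\ast}$-related to $0$, the bracket of any two of them is $\pi^{\ast}$-related to the bracket of their (commuting) projections, hence lies in $\ker \pi^{\ast}_{\ast}=\mathcal{W}$. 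This shows in advance that no $\delta/\delta t^{a}$ or $\delta/\delta x^{i}$ component can appear, that $\left[ \partial/\partial p_{j}^{b},\partial/\partial p_{k}^{c}\right] =0$, and therefore reduces the whole proof to reading off the $\partial/\partial p_{i}^{a}$-coefficients.

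The heart of the argument is the first bracket $\left[ \delta/\delta t^{b},\delta/\delta t^{c}\right] $. Expanding both fields by (\ref{form-of-delta-t-si-x}) and applying the Leibniz rule, the surviving contributions are the plain $\partial/\partial t$-derivatives of $\underset{1}{N}$ together with the two cross terms in which one factor $\underset{1}{N}\,\partial/\partial p$ differentiates the other coefficient with respect to $p$. The one step that requires an idea rather than bookkeeping is to notice that these $p$-derivative terms combine with the bare $\partial/\partial t$-derivatives to rebuild exactly the adapted derivative $\delta/\delta t$ of (\ref{form-of-delta-t-si-x}) acting on the coefficients; the outcome is precisely the antisymmetrized $\delta$-derivative $R_{\left( i\right) bc}^{\left( a\right) }$ of (\ref{Formulas-Poisson-brackets}). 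The brackets $\left[ \delta/\delta t^{b},\delta/\delta x^{k}\right] $ and $\left[ \delta/\delta x^{j},\delta/\delta x^{k}\right] $ are handled verbatim, yielding $R_{\left( i\right) bk}^{\left( a\right) }$ and $R_{\left( i\right) jk}^{\left( a\right) }$ (now mixing a $\delta/\delta t$- and a $\delta/\delta x$-derivative, respectively two $\delta/\delta x$-derivatives). The two remaining brackets with $\partial/\partial p_{k}^{c}$ are simpler, since $\left[ \partial/\partial t^{b},\partial/\partial p_{k}^{c}\right] =0$ and $\left[ \partial/\partial x^{j},\partial/\partial p_{k}^{c}\right] =0$ leave only a single $p$-derivative of a connection coefficient, giving $B_{\left( i\right) b\left( c\right) }^{\left( a\right) \ \left( k\right) }$ and $B_{\left( i\right) j\left( c\right) }^{\left( a\right) \ \left( k\right) }$ as in (\ref{Formulas-Poisson-brackets}).

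I expect the only real difficulty to be notational: carrying the paired collective indices $(a)$ and $(i)$ consistently through the renaming of the summation indices when the two $p$-derivative terms are reassembled into a single $\delta$-derivative. No analytic input beyond smoothness is needed. As a built-in consistency check, each coefficient $R$ and $B$ must transform as a $d$-tensor under (\ref{tr-group-dual-jet}); this is automatic, because the left-hand brackets are globally defined vector fields and the adapted basis transforms tensorially by (\ref{tr-laws-ad-basis}), so the right-hand coefficients are forced to be $d$-tensors.
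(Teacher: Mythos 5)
Your proof is correct and is essentially the paper's own argument: the Proposition is stated there with only the words ``by a direct calculation,'' and your expansion of $\left[\delta/\delta t^{b},\delta/\delta t^{c}\right]$ via (\ref{form-of-delta-t-si-x}) does reassemble correctly, since the cross terms $\underset{1}{N}\overset{(e)}{_{(r)b}}\,\partial\underset{1}{N}\overset{(a)}{_{(i)c}}/\partial p_{r}^{e}$ and their antisymmetric partners combine with the bare $\partial/\partial t$-derivatives into exactly the $\delta$-derivatives appearing in (\ref{Formulas-Poisson-brackets}), and likewise for the mixed and spatial cases. Your preliminary $\pi^{\ast}$-relatedness observation, which forces all brackets into $\mathcal{W}=\ker\pi^{\ast}{}_{\ast}$ before any computation, is a clean structural addition not spelled out in the paper (note only that $\left[\partial/\partial p_{j}^{b},\partial/\partial p_{k}^{c}\right]=0$ follows from their being coordinate fields, not from verticality alone, as you yourself listed among the ingredients).
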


Using the relations (\ref{Poisson brackets}) and the Proposition \ref%
{H-integrability}, we get

\begin{proposition}
The horizontal distribution $\mathcal{H}$ is integrable if and only if the
following $d$-tensor fields vanish:%
\begin{equation}
R_{\left( i\right) bc}^{\left( a\right) }=0,\text{ }R_{\left( i\right)
bk}^{\left( a\right) }=0,\text{ }R_{\left( i\right) jk}^{\left( a\right) }=0.
\label{Cond-integrability-H}
\end{equation}
\end{proposition}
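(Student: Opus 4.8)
The plan is to read the integrability criterion of Proposition \ref{H-integrability} through the explicit bracket formulas (\ref{Poisson brackets}). Recall that the horizontal distribution splits as $\mathcal{H}=\mathcal{H}_{\mathcal{T}}\oplus \mathcal{H}_{M}$, where $\mathcal{H}_{\mathcal{T}}$ is spanned by the $d$-vector fields $\delta /\delta t^{a}$ and $\mathcal{H}_{M}$ by $\delta /\delta x^{i}$. Hence an arbitrary $\mathcal{T}$-horizontal field may be written $X^{\mathcal{H}_{\mathcal{T}}}=X^{b}\dfrac{\delta }{\delta t^{b}}$ and an arbitrary $M$-horizontal field $X^{\mathcal{H}_{M}}=X^{j}\dfrac{\delta }{\delta x^{j}}$, with $X^{b},X^{j}\in \mathcal{F}\left( E^{\ast }\right) $.

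First I would compute the vertical part of each of the three brackets occurring in Proposition \ref{H-integrability}. Using the Leibniz rule $\left[ fU,gV\right] =fg\left[ U,V\right] +f(Ug)V-g(Vf)U$, the bracket of two $\mathcal{T}$-horizontal fields expands as
\begin{equation*}
\left[ X^{b}\frac{\delta }{\delta t^{b}},Y^{c}\frac{\delta }{\delta t^{c}}\right] =X^{b}Y^{c}\left[ \frac{\delta }{\delta t^{b}},\frac{\delta }{\delta t^{c}}\right] +X^{b}\left( \frac{\delta Y^{c}}{\delta t^{b}}\right) \frac{\delta }{\delta t^{c}}-Y^{c}\left( \frac{\delta X^{b}}{\delta t^{c}}\right) \frac{\delta }{\delta t^{b}}.
\end{equation*}
The last two summands lie in $\mathcal{H}_{\mathcal{T}}$, so their vertical projection vanishes, and the first formula in (\ref{Poisson brackets}) gives $\left[ X^{\mathcal{H}_{\mathcal{T}}},Y^{\mathcal{H}_{\mathcal{T}}}\right] ^{\mathcal{W}}=X^{b}Y^{c}R_{\left( i\right) bc}^{\left( a\right) }\dfrac{\partial }{\partial p_{i}^{a}}$. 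The identical computation, invoking the second and fourth formulas in (\ref{Poisson brackets}), yields $\left[ X^{\mathcal{H}_{\mathcal{T}}},Y^{\mathcal{H}_{M}}\right] ^{\mathcal{W}}=X^{b}Y^{k}R_{\left( i\right) bk}^{\left( a\right) }\dfrac{\partial }{\partial p_{i}^{a}}$ and $\left[ X^{\mathcal{H}_{M}},Y^{\mathcal{H}_{M}}\right] ^{\mathcal{W}}=X^{j}Y^{k}R_{\left( i\right) jk}^{\left( a\right) }\dfrac{\partial }{\partial p_{i}^{a}}$.

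Finally, since the coefficient functions $X^{b},X^{j},Y^{c},Y^{k}$ are arbitrary, each of these three vertical expressions vanishes identically (for all $X,Y$) if and only if the corresponding $d$-tensor $R_{\left( i\right) bc}^{\left( a\right) }$, $R_{\left( i\right) bk}^{\left( a\right) }$, or $R_{\left( i\right) jk}^{\left( a\right) }$ vanishes; one strips off the arbitrary factors by specializing $X$ and $Y$ to the basis fields $\delta /\delta t^{b}$, $\delta /\delta x^{j}$. By Proposition \ref{H-integrability}, the integrability of $\mathcal{H}$ is therefore equivalent to the simultaneous vanishing (\ref{Cond-integrability-H}). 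The only point requiring care is the observation that the Leibniz derivative terms contribute nothing to the vertical component — which holds precisely because $\delta /\delta t^{a}$ and $\delta /\delta x^{i}$ are themselves horizontal — so no serious obstacle arises, and the result is essentially a direct transcription of (\ref{Poisson brackets}) into the criterion of Proposition \ref{H-integrability}.
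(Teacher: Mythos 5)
Your proof is correct and follows essentially the same route as the paper, which presents this proposition as an immediate consequence of the bracket formulas (\ref{Poisson brackets}) combined with the criterion of Proposition \ref{H-integrability}. Your Leibniz-rule expansion, the observation that the derivative terms stay horizontal and so drop out under the vertical projection $w$, and the specialization to the adapted basis fields $\delta /\delta t^{a}$, $\delta /\delta x^{i}$ merely make explicit the routine computation the paper leaves to the reader.
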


Now, assuming that a nonlinear connection $N$ is given, we can define an $%
\mathcal{F}\left( E^{\ast }\right) $-linear mapping $\mathbb{P}:\chi \left(
E^{\ast }\right) \rightarrow \chi \left( E^{\ast }\right) $, putting

\begin{equation}
\mathbb{P}\left( X^{\mathcal{H}_{\mathcal{T}}}\right) =X^{\mathcal{H}_{%
\mathcal{T}}},\text{ }\mathbb{P}\left( X^{\mathcal{H}_{M}}\right) =X^{%
\mathcal{H}_{M}},\text{ }\mathbb{P}\left( X^{\mathcal{W}}\right) =-X^{%
\mathcal{W}},\text{ }\forall \text{ }X\in \chi \left( E^{\ast }\right) .
\label{Defin-Apr-Prod}
\end{equation}%
Thus $\mathbb{P}$ has the properties:%
\begin{equation}
\begin{array}{ccc}
\mathbb{P}\circ \mathbb{P}=I, & \mathbb{P}=2\left( h_{\mathcal{T}%
}+h_{M}\right) -I=I-2w, & \text{rank }\mathbb{P}=m+n+mn.%
\end{array}
\label{Propr-Apr-Prod}
\end{equation}%
Obviously, we have

\begin{theorem}
A nonlinear connection $N$ on $E^{\ast }$ is characterized by the existence
of the \textbf{almost product structure} $\mathbb{P}$ on $E^{\ast }$, whose
eigenspace corresponding to the eingenvalue $-1$ coincides with the linear
space of the vertical distribution $\mathcal{W}$ on $E^{\ast }.$
\end{theorem}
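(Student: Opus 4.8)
The plan is to prove the asserted characterization as a bijective correspondence between nonlinear connections on $E^{\ast}$ and almost product structures on $E^{\ast}$ whose $(-1)$-eigenspace is the vertical distribution $\mathcal{W}$, by exhibiting mutually inverse constructions in the two directions.

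For the direct implication I would start from a given nonlinear connection $N$ and take the endomorphism $\mathbb{P}$ defined by (\ref{Defin-Apr-Prod}). The relations (\ref{Propr-Apr-Prod}) already established show $\mathbb{P}\circ\mathbb{P}=I$, so $\mathbb{P}$ is an almost product structure, and it only remains to identify its $(-1)$-eigenspace. Using the identity $\mathbb{P}=I-2w$ from (\ref{Propr-Apr-Prod}), a vector field $X$ satisfies $\mathbb{P}X=-X$ if and only if $wX=X$, that is if and only if $X=X^{\mathcal{W}}$ is vertical; hence the eigenspace of $\mathbb{P}$ for the eigenvalue $-1$ coincides exactly with $\mathcal{W}$, as required.

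For the converse I would start from an arbitrary almost product structure $\mathbb{P}$, i.e. an $\mathcal{F}(E^{\ast})$-linear endomorphism of $\chi(E^{\ast})$ with $\mathbb{P}\circ\mathbb{P}=I$ whose $(-1)$-eigenspace equals $\mathcal{W}$. Since $\mathbb{P}$ is an involution its minimal polynomial divides $(\lambda-1)(\lambda+1)$, so at each point $\mathbb{P}$ is diagonalizable with eigenvalues $\pm 1$, and the two $\mathcal{F}(E^{\ast})$-linear operators $\frac{1}{2}(I+\mathbb{P})$ and $\frac{1}{2}(I-\mathbb{P})$ are complementary projectors onto the $(+1)$- and $(-1)$-eigenspaces of $\mathbb{P}$. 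Defining $\mathcal{H}$ to be the distribution whose fibre $H_{u^{\ast}}$ is the image of $\frac{1}{2}(I+\mathbb{P})$ at $u^{\ast}$, the pointwise eigenspace decomposition gives $T_{u^{\ast}}E^{\ast}=H_{u^{\ast}}\oplus W_{u^{\ast}}$ for every $u^{\ast}\in E^{\ast}$, since by hypothesis the $(-1)$-eigenspace is $\mathcal{W}$. By the Definition of a nonlinear connection via (\ref{hor-distrib}), the distribution $\mathcal{H}$ is thus a horizontal distribution and determines a nonlinear connection $N$ on $E^{\ast}$.

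Finally I would verify that the two assignments are mutually inverse. Starting from $N$, the $(+1)$-eigenspace of the structure $\mathbb{P}$ of (\ref{Defin-Apr-Prod}) is precisely the horizontal distribution $\mathcal{H}_{\mathcal{T}}\oplus\mathcal{H}_{M}$ of $N$; conversely, the almost product structure reconstructed from the $(+1)$-eigenspace of a given $\mathbb{P}$ acts as $+1$ there and as $-1$ on $\mathcal{W}$, so it reproduces the original $\mathbb{P}$. I do not expect any real obstacle: the whole argument reduces to the elementary fact that an $\mathcal{F}(E^{\ast})$-linear involution is diagonalizable with eigenvalues $\pm 1$, so that fixing its $(-1)$-eigenspace to be $\mathcal{W}$ is exactly the same datum as choosing a supplement $\mathcal{H}$ of $\mathcal{W}$. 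The only point deserving a word of care is smoothness: because $\mathbb{P}$ is a smooth $(1,1)$ tensor field, the projectors $\frac{1}{2}(I\pm\mathbb{P})$ are smooth, whence $\mathcal{H}$ is a genuine differential distribution.
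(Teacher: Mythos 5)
Your proposal is correct, and it takes exactly the route the paper intends: the paper states this theorem without a written proof (prefacing it with ``Obviously''), having already supplied the construction (\ref{Defin-Apr-Prod}) and the identities (\ref{Propr-Apr-Prod}), and your argument is precisely the elementary elaboration of that --- reading off the $(-1)$-eigenspace from $\mathbb{P}=I-2w$ in one direction, and recovering the horizontal distribution as the image of the projector $\frac{1}{2}(I+\mathbb{P})$ in the other. Your added remarks on constant rank and smoothness of the eigenprojectors are the right points to flag and close the only gaps a fully rigorous version needs.
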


Moreover, taking into account that the Nijenhuis tensor of the almost produs
structure $\mathbb{P}$ is given by%
\begin{equation*}
\mathcal{N}_{\mathbb{P}}\left( X,Y\right) =\mathbb{P}^{2}\left[ X,Y\right] +%
\left[ \mathbb{P}X,\mathbb{P}Y\right] -\mathbb{P}\left[ \mathbb{P}X,Y\right]
-\mathbb{P}\left[ X,\mathbb{P}Y\right] ,
\end{equation*}%
we obtain that%
\begin{equation*}
\begin{array}{ll}
\mathcal{N}_{\mathbb{P}}\left( X^{\mathcal{H}_{\mathcal{T}}},Y^{\mathcal{H}_{%
\mathcal{T}}}\right) =4w\left[ X^{\mathcal{H}_{\mathcal{T}}},Y^{\mathcal{H}_{%
\mathcal{T}}}\right] , & \mathcal{N}_{\mathbb{P}}\left( X^{\mathcal{H}_{%
\mathcal{T}}},Y^{\mathcal{H}_{M}}\right) =4w\left[ X^{\mathcal{H}_{\mathcal{T%
}}},Y^{\mathcal{H}_{M}}\right] ,\medskip \\ 
\mathcal{N}_{\mathbb{P}}\left( X^{\mathcal{H}_{\mathcal{T}}},Y^{\mathcal{W}%
}\right) =0, & \mathcal{N}_{\mathbb{P}}\left( X^{\mathcal{H}_{M}},Y^{%
\mathcal{H}_{M}}\right) =4w\left[ X^{\mathcal{H}_{M}},Y^{\mathcal{H}_{M}}%
\right] ,\medskip \\ 
\mathcal{N}_{\mathbb{P}}\left( X^{\mathcal{H}_{M}},Y^{\mathcal{W}}\right) =0,
& \mathcal{N}_{\mathbb{P}}\left( X^{\mathcal{W}},Y^{\mathcal{W}}\right) =0.%
\end{array}%
\end{equation*}

Therefore, we can formulate

\begin{proposition}
The almost product structure $\mathbb{P}$ is integrable if and only if the
horizontal distribution $\mathcal{H}$ is integrable.
\end{proposition}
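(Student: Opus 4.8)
The statement to prove is that the almost product structure $\mathbb{P}$ is integrable if and only if the horizontal distribution $\mathcal{H}$ is integrable. The standard criterion for integrability of an almost product structure is the vanishing of its Nijenhuis tensor $\mathcal{N}_{\mathbb{P}}$, so the plan is to exploit the explicit componentwise evaluation of $\mathcal{N}_{\mathbb{P}}$ already displayed just above the statement, together with Proposition~\ref{H-integrability}, which characterizes integrability of $\mathcal{H}$ in terms of the vanishing of the vertical parts of Poisson brackets of horizontal fields.

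First I would recall that an almost product structure is integrable precisely when its Nijenhuis tensor vanishes identically on all pairs of vector fields. Since $\mathcal{N}_{\mathbb{P}}$ is $\mathcal{F}(E^{\ast})$-bilinear and antisymmetric, it suffices to evaluate it on pairs drawn from the three summands $\mathcal{H}_{\mathcal{T}}$, $\mathcal{H}_{M}$, $\mathcal{W}$ of the decomposition $\chi(E^{\ast}) = \chi(\mathcal{H}_{\mathcal{T}}) \oplus \chi(\mathcal{H}_{M}) \oplus \chi(\mathcal{W})$. This is exactly what the displayed formulas above the statement provide. Reading them off, the three mixed and vertical pairs involving a $\mathcal{W}$-argument already vanish unconditionally, so the only obstructions are the three terms
\begin{equation*}
\mathcal{N}_{\mathbb{P}}\left( X^{\mathcal{H}_{\mathcal{T}}},Y^{\mathcal{H}_{\mathcal{T}}}\right) = 4w\left[ X^{\mathcal{H}_{\mathcal{T}}},Y^{\mathcal{H}_{\mathcal{T}}}\right],\quad \mathcal{N}_{\mathbb{P}}\left( X^{\mathcal{H}_{\mathcal{T}}},Y^{\mathcal{H}_{M}}\right) = 4w\left[ X^{\mathcal{H}_{\mathcal{T}}},Y^{\mathcal{H}_{M}}\right],\quad \mathcal{N}_{\mathbb{P}}\left( X^{\mathcal{H}_{M}},Y^{\mathcal{H}_{M}}\right) = 4w\left[ X^{\mathcal{H}_{M}},Y^{\mathcal{H}_{M}}\right].
\end{equation*}
Hence $\mathcal{N}_{\mathbb{P}} \equiv 0$ if and only if the vertical projections $\left[ X^{\mathcal{H}_{\mathcal{T}}},Y^{\mathcal{H}_{\mathcal{T}}}\right]^{\mathcal{W}}$, $\left[ X^{\mathcal{H}_{\mathcal{T}}},Y^{\mathcal{H}_{M}}\right]^{\mathcal{W}}$ and $\left[ X^{\mathcal{H}_{M}},Y^{\mathcal{H}_{M}}\right]^{\mathcal{W}}$ all vanish for every $X, Y \in \chi(E^{\ast})$.

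To close the argument I would then invoke Proposition~\ref{H-integrability}, which states precisely that $\mathcal{H}$ is integrable if and only if those same three vertical brackets vanish. Combining this with the Nijenhuis criterion gives the chain of equivalences: $\mathbb{P}$ integrable $\iff$ $\mathcal{N}_{\mathbb{P}} \equiv 0$ $\iff$ the three vertical horizontal-brackets vanish $\iff$ $\mathcal{H}$ integrable. The only step requiring genuine care, rather than routine substitution, is the first equivalence: one must justify that integrability of an almost product structure is equivalent to the vanishing of its Nijenhuis tensor. This is the analogue of the Newlander--Nirenberg type result for almost product (equivalently, the Frobenius criterion applied to the two eigendistributions of $\mathbb{P}$); I expect this to be the main conceptual point, since the remaining two equivalences are immediate once the componentwise Nijenhuis formulas and Proposition~\ref{H-integrability} are placed side by side. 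In practice, because $\mathbb{P}$ has eigenvalues $\pm 1$ with eigendistributions $\mathcal{H}$ and $\mathcal{W}$, integrability of $\mathbb{P}$ reduces by Frobenius to simultaneous integrability of $\mathcal{H}$ and $\mathcal{W}$; as $\mathcal{W}$ is always integrable by the preceding proposition, everything is controlled by $\mathcal{H}$, which is the assertion to be proved.
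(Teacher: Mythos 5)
Your proposal is correct and follows essentially the same route as the paper: the paper also derives the result directly from the displayed componentwise formulas for $\mathcal{N}_{\mathbb{P}}$ (where all pairs involving a vertical argument vanish identically and the three horizontal pairs reduce to $4w[\cdot,\cdot]$) combined with Proposition \ref{H-integrability}, the vanishing of the Nijenhuis tensor being the standard integrability criterion for an almost product structure. Your closing remark, reducing integrability of $\mathbb{P}$ via Frobenius to integrability of its two eigendistributions $\mathcal{H}$ and $\mathcal{W}$ and using that $\mathcal{W}$ is always integrable, is a sound justification of the one step the paper leaves implicit.
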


\section{$N$-linear connections}

\hspace{4mm} A linear connection on $E^{\ast }=J^{1\ast }\left( \mathcal{T}%
,M\right) $ is an application%
\begin{equation*}
D:\chi \left( E^{\ast }\right) \times \chi \left( E^{\ast }\right)
\rightarrow \chi \left( E^{\ast }\right) ,\quad \left( X,Y\right)
\rightarrow D_{X}Y,
\end{equation*}%
having, for all $X,$ $X_{1},$ $X_{2},$ $Y_{1},$ $Y_{2},$ $Y\in \chi \left(
E^{\ast }\right) ,$ $f\in \mathcal{F}\left( E^{\ast }\right) $, the
properties:

\begin{enumerate}
\item $D_{X_{1}+X_{2}}Y=D_{X_{1}}Y+D_{X_{2}}Y,$

\item $D_{fX}Y=fD_{X}Y,$

\item $D_{X}\left( Y_{1}+Y_{2}\right) =D_{X}Y_{1}+D_{X}Y_{2},$

\item $D_{X}\left( fY\right) =X(f)Y+fD_{X}Y.$
\end{enumerate}

With respect to the decompositions of type (\ref{decomp-vect-fields}), we
have 
\begin{equation}
D_{X}Y=\underset{\alpha =0}{\overset{2}{\Sigma }}\left( D_{X^{\mathcal{H}_{%
\mathcal{T}}}}Y^{\mathcal{W}_{\alpha }}+D_{X^{\mathcal{H}_{M}}}Y^{\mathcal{W}%
_{\alpha }}+D_{X^{\mathcal{W}}}Y^{\mathcal{W}_{\alpha }}\right) ,
\label{Linear Connection}
\end{equation}%
where $\mathcal{W}_{1}=\mathcal{H}_{\mathcal{T}},$ $\mathcal{W}_{2}=\mathcal{%
H}_{M}$ and $\mathcal{W}_{0}=\mathcal{W}.$ Remark that the entities $D_{X^{%
\mathcal{H}_{\mathcal{T}}}}Y^{\mathcal{W}_{\alpha }},$ $D_{X^{\mathcal{H}%
_{M}}}Y^{\mathcal{W}_{\alpha }},$ $D_{X^{\mathcal{W}}}Y^{\mathcal{W}_{\alpha
}},$ $\alpha =1,2,0,$ are vector fields, not necessarily distinguished.

Obviously, the linear connection $D$ on $E^{\ast }$ can be uniquely
determined by \textbf{27} local coefficients, which are written in the
adapted basis (\ref{ad-basis-nlc}) in the form:%
\begin{eqnarray}
D_{\dfrac{\delta }{\delta t^{c}}}\dfrac{\delta }{\delta t^{b}} &=&A_{bc}^{a}%
\dfrac{\delta }{\delta t^{a}}+A_{bc}^{i}\dfrac{\delta }{\delta x^{i}}%
+A_{\left( i\right) bc}^{\left( a\right) }\dfrac{\partial }{\partial
p_{i}^{a}},  \label{D-temporal} \\
D_{\dfrac{\delta }{\delta t^{c}}}\dfrac{\delta }{\delta x^{j}} &=&A_{jc}^{a}%
\dfrac{\delta }{\delta t^{a}}+A_{jc}^{i}\dfrac{\delta }{\delta x^{i}}%
+A_{\left( i\right) jc}^{\left( a\right) }\dfrac{\partial }{\partial
p_{i}^{a}},  \notag \\
-D_{\dfrac{\delta }{\delta t^{c}}}\dfrac{\partial }{\partial p_{j}^{b}}
&=&A_{\ \left( b\right) c}^{a\left( j\right) }\dfrac{\delta }{\delta t^{a}}%
+A_{\ \left( b\right) c}^{i\left( j\right) }\dfrac{\delta }{\delta x^{i}}%
+A_{\left( i\right) \left( b\right) c}^{\left( a\right) \left( j\right) }%
\dfrac{\partial }{\partial p_{i}^{a}},  \notag
\end{eqnarray}

\begin{eqnarray}
D_{\dfrac{\delta }{\delta x^{k}}}\frac{\delta }{\delta t^{b}} &=&H_{bk}^{a}%
\frac{\delta }{\delta t^{a}}+H_{bk}^{i}\frac{\delta }{\delta x^{i}}%
+H_{\left( i\right) bk}^{\left( a\right) }\frac{\partial }{\partial p_{i}^{a}%
},  \label{D-spatial} \\
D_{\dfrac{\delta }{\delta x^{k}}}\frac{\delta }{\delta x^{j}} &=&H_{jk}^{a}%
\frac{\delta }{\delta t^{a}}+H_{jk}^{i}\frac{\delta }{\delta x^{i}}%
+H_{\left( i\right) jk}^{\left( a\right) }\frac{\partial }{\partial p_{i}^{a}%
},  \notag \\
-D_{\dfrac{\delta }{\delta x^{k}}}\frac{\partial }{\partial p_{j}^{b}}
&=&H_{\ \left( b\right) k}^{a\left( j\right) }\frac{\delta }{\delta t^{a}}%
+H_{\ \left( b\right) k}^{i\left( j\right) }\frac{\delta }{\delta x^{i}}%
+H_{\left( i\right) \left( b\right) k}^{\left( a\right) \left( j\right) }%
\frac{\partial }{\partial p_{i}^{a}},  \notag
\end{eqnarray}

\begin{eqnarray}
D_{\dfrac{\partial }{\partial p_{k}^{c}}}\frac{\delta }{\delta t^{b}}
&=&C_{b\left( c\right) }^{a\left( k\right) }\frac{\delta }{\delta t^{a}}%
+C_{b\left( c\right) }^{i\left( k\right) }\frac{\delta }{\delta x^{i}}%
+C_{\left( i\right) b\left( c\right) }^{\left( a\right) \ \left( k\right) }%
\frac{\partial }{\partial p_{i}^{a}},  \label{D-vertical} \\
D_{\dfrac{\partial }{\partial p_{k}^{c}}}\frac{\delta }{\delta x^{j}}
&=&C_{j\left( c\right) }^{a\left( k\right) }\frac{\delta }{\delta t^{a}}%
+C_{j\left( c\right) }^{i\left( k\right) }\frac{\delta }{\delta x^{i}}%
+C_{\left( i\right) j\left( c\right) }^{\left( a\right) \ \left( k\right) }%
\frac{\partial }{\partial p_{i}^{a}},  \notag \\
-D_{\dfrac{\partial }{\partial p_{k}^{c}}}\frac{\partial }{\partial p_{j}^{b}%
} &=&C_{\ \left( b\right) \left( c\right) }^{a\left( j\right) \left(
k\right) }\frac{\delta }{\delta t^{a}}+C_{\ \left( b\right) \left( c\right)
}^{i\left( j\right) \left( k\right) }\frac{\delta }{\delta x^{i}}+C_{\left(
i\right) \left( b\right) \left( c\right) }^{\left( a\right) \left( j\right)
\left( k\right) }\frac{\partial }{\partial p_{i}^{a}}.  \notag
\end{eqnarray}

To work with these 27 adapted coefficients is not imposible, but is
laborious. We construct in the sequel linear connections whose coefficients
are much easy to be used. In this direction, let us consider a nonlinear
connection $N$ on $E^{\ast }.$

\begin{definition}
A linear connection $D$ on $E^{\ast }$ is called an $N$\textbf{-linear
connection} if it preserves by parallelism the $\mathcal{T}$-horizontal, $M$%
-horizontal and vertical distributions $\mathcal{H}_{\mathcal{T}},$ $%
\mathcal{H}_{M}$ and $\mathcal{W}$ on $E^{\ast }.$
\end{definition}

In other words, a linear connection is an $N$-linear connection if and only
if, for any vector field $X\in \chi \left( E^{\ast }\right) ,$ $D_{X}$
carries the $\mathcal{T}$-horizontal vector fields into $\mathcal{T}$%
-horizontal vector fields, the $M$-horizontal vector fields into $M$%
-horizontal vector fields and the vertical vector fields into vertical
vector fields. Thus, we have $D_{X}Y^{\mathcal{H}_{\mathcal{T}}}\in \chi
\left( \mathcal{H}_{\mathcal{T}}\right) ,$ $D_{X}Y^{\mathcal{H}_{M}}\in \chi
\left( \mathcal{H}_{M}\right) $ and $D_{X}Y^{\mathcal{W}}\in \chi \left( 
\mathcal{W}\right) ,$ which can be written in the form%
\begin{equation}
D_{X}\left( h_{\mathcal{T}}Y\right) =h_{\mathcal{T}}D_{X}Y,\text{ }%
D_{X}\left( h_{M}Y\right) =h_{M}D_{X}Y,\text{ }D_{X}\left( wY\right)
=wD_{X}Y.  \label{D-preserves distrib}
\end{equation}

Consequently, we obtain

\begin{theorem}
A linear connection $D$ is an $N$-linear connection if and only if, for any
vector field $X\in \chi \left( E^{\ast }\right) $, we have%
\begin{equation}
D_{X}h_{\mathcal{T}}=0,\ D_{X}h_{M}=0,\ D_{X}w=0.  \label{D-h-uri=0}
\end{equation}
\end{theorem}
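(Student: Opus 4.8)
The plan is to prove the equivalence between the $N$-linear connection condition \eqref{D-preserves distrib} and the vanishing condition \eqref{D-h-uri=0} by exploiting the fact that $h_{\mathcal{T}}$, $h_{M}$ and $w$ are $\mathcal{F}(E^{\ast})$-linear operators on $\chi(E^{\ast})$, together with the Leibniz rule (property 4 of a linear connection). The key observation is that the covariant derivative $D_X$ of a tensor field of type $(1,1)$ acts by a product rule, so applying $D_X$ to the identity $h_{\mathcal{T}}Y = h_{\mathcal{T}}(Y)$ and expanding via Leibniz will separate a term involving $(D_X h_{\mathcal{T}})Y$ from a term involving $h_{\mathcal{T}}(D_X Y)$.

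First I would make precise what $D_X h_{\mathcal{T}}$ means: since $h_{\mathcal{T}}$ is a $(1,1)$-tensor field (an $\mathcal{F}(E^{\ast})$-linear endomorphism of $\chi(E^{\ast})$), its covariant derivative is defined by the standard formula
\begin{equation*}
(D_X h_{\mathcal{T}})(Y) = D_X(h_{\mathcal{T}}Y) - h_{\mathcal{T}}(D_X Y),
\end{equation*}
and likewise for $h_M$ and $w$. With this definition the two conditions become tautologically linked: the condition $D_X(h_{\mathcal{T}}Y) = h_{\mathcal{T}}(D_X Y)$ for all $Y$ is exactly the statement that the endomorphism $(D_X h_{\mathcal{T}})$ is the zero operator, i.e. $D_X h_{\mathcal{T}} = 0$. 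I would then verify directly that $D_X h_{\mathcal{T}}$ so defined is itself $\mathcal{F}(E^{\ast})$-linear in $Y$ (the $X(f)$ terms arising from the Leibniz rule cancel because $h_{\mathcal{T}}$ is $\mathcal{F}(E^{\ast})$-linear), which justifies calling it a well-defined tensor field whose vanishing is meaningful.

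For the forward implication I would assume \eqref{D-preserves distrib}: for every $Y$, $D_X(h_{\mathcal{T}}Y) = h_{\mathcal{T}} D_X Y$, $D_X(h_M Y) = h_M D_X Y$, and $D_X(wY)=w D_X Y$. Substituting into the displayed formula for the covariant derivative of each projector immediately yields $(D_X h_{\mathcal{T}})(Y)=0$, $(D_X h_M)(Y)=0$, $(D_X w)(Y)=0$ for all $Y$, hence \eqref{D-h-uri=0}. For the converse I would run the same computation in reverse: assuming $D_X h_{\mathcal{T}}=0$ as an operator identity means $(D_X h_{\mathcal{T}})(Y)=0$ for every $Y$, which by the definition is precisely $D_X(h_{\mathcal{T}}Y)=h_{\mathcal{T}}(D_XY)$, recovering the first relation of \eqref{D-preserves distrib}, and analogously for $h_M$ and $w$.

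I do not expect a genuine obstacle here, since the result is essentially a restatement once the tensorial derivative of a projector is correctly set up; the only point requiring care is the verification that $D_X h_{\mathcal{T}}$ is tensorial (the cancellation of the $X(f)Y$ terms), which is what makes ``$D_X h_{\mathcal{T}}=0$'' a legitimate pointwise condition rather than a mere abbreviation. I would also remark, using the relations \eqref{projections} satisfied by the projectors, that it suffices in principle to impose the condition on two of the three projectors since $h_{\mathcal{T}}+h_M+w=I$ forces the third identity to hold automatically (because $D_X I = 0$), though stating all three symmetrically is cleaner for later use.
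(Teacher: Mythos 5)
Your proposal is correct and takes essentially the same route as the paper, which obtains the theorem as an immediate consequence of the reformulation (\ref{D-preserves distrib}) together with the standard definition $\left( D_{X}h_{\mathcal{T}}\right) \left( Y\right) =D_{X}\left( h_{\mathcal{T}}Y\right) -h_{\mathcal{T}}\left( D_{X}Y\right) $ of the covariant derivative of a $(1,1)$-tensor field, exactly as you set it up (including the tensoriality check, which the paper leaves tacit). Your closing remark that $h_{\mathcal{T}}+h_{M}+w=I$ and $D_{X}I=0$ make one of the three conditions in (\ref{D-h-uri=0}) redundant is a correct minor addition not found in the paper.
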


\begin{corollary}
For any $N$-linear connection $D$ we have%
\begin{equation}
D_{X}\mathbb{P}=0,\text{ }\forall \text{ }X\in \chi \left( E^{\ast }\right) .
\label{DP=0}
\end{equation}
\end{corollary}

Moreover, we also have

\begin{theorem}
A linear connection $D$ on $E^{\ast }$ is an $N$-linear connection if and
only if%
\begin{equation*}
\begin{array}{ll}
\left( D_{X}Y^{\mathcal{W}_{\beta }}\right) ^{\mathcal{W}}=0, & \left(
D_{X}Y^{\mathcal{W}}\right) ^{\mathcal{W}_{\beta }}=0,\medskip  \\ 
\left( D_{X}Y^{\mathcal{W}_{1}}\right) ^{\mathcal{W}_{2}}=0, & \left(
D_{X}Y^{\mathcal{W}_{2}}\right) ^{\mathcal{W}_{1}}=0,%
\end{array}%
\end{equation*}%
where $\beta =1,2$ and $\mathcal{W}_{1}=\mathcal{H}_{\mathcal{T}},$ $%
\mathcal{W}_{2}=\mathcal{H}_{M}$.
\end{theorem}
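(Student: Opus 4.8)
The plan is to show the equivalence by translating the two defining conditions into the language of the horizontal and vertical projections $h_{\mathcal{T}}$, $h_{M}$, $w$, and then reading off each of the four displayed component equations from the three conditions in the previous theorem, equation (\ref{D-preserves distrib}).

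First I would recall that, by the theorem immediately preceding, $D$ is an $N$-linear connection if and only if it commutes with each of the three projections, i.e.\ $D_{X}(h_{\mathcal{T}}Y)=h_{\mathcal{T}}D_{X}Y$, $D_{X}(h_{M}Y)=h_{M}D_{X}Y$ and $D_{X}(wY)=wD_{X}Y$ for all $X\in\chi(E^{\ast})$. The strategy is to apply these identities to the three types of adapted vector field $Y^{\mathcal{W}_{1}}=h_{\mathcal{T}}Y$, $Y^{\mathcal{W}_{2}}=h_{M}Y$ and $Y^{\mathcal{W}}=wY$, and to extract components by composing on the left with $w$, $h_{\mathcal{T}}$ and $h_{M}$, using the orthogonality relations (\ref{projections}) (in particular $h_{\mathcal{T}}\circ w=0$, $h_{M}\circ w=0$, $h_{\mathcal{T}}\circ h_{M}=0$, and the idempotency $w^{2}=w$ etc.).

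For the forward direction I would argue as follows. For $Y^{\mathcal{W}_{\beta}}$ with $\beta=1,2$ (a horizontal field), preservation gives $D_{X}Y^{\mathcal{W}_{\beta}}=D_{X}(h_{\cdot}Y)=h_{\cdot}D_{X}Y$, which lies in the corresponding horizontal distribution; hence its vertical part vanishes, $(D_{X}Y^{\mathcal{W}_{\beta}})^{\mathcal{W}}=w\,h_{\cdot}D_{X}Y=0$, giving the first equation. Symmetrically, applying the vertical-preservation identity to $Y^{\mathcal{W}}=wY$ yields $D_{X}Y^{\mathcal{W}}=w D_{X}Y\in\chi(\mathcal{W})$, so its $\mathcal{W}_{\beta}$-part is zero, $(D_{X}Y^{\mathcal{W}})^{\mathcal{W}_{\beta}}=h_{\cdot}w D_{X}Y=0$, the second equation. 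For the two cross-terms, preservation of the $\mathcal{T}$-horizontal distribution forces $D_{X}Y^{\mathcal{W}_{1}}\in\chi(\mathcal{H}_{\mathcal{T}})$, whose $M$-horizontal part is killed by $h_{M}\circ h_{\mathcal{T}}=0$, giving $(D_{X}Y^{\mathcal{W}_{1}})^{\mathcal{W}_{2}}=0$; and analogously $(D_{X}Y^{\mathcal{W}_{2}})^{\mathcal{W}_{1}}=0$.

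For the converse I would show that the four component equations together force each $D_{X}$ to preserve the three distributions. Fixing $Y$ and writing $D_{X}Y^{\mathcal{W}_{1}}=(D_{X}Y^{\mathcal{W}_{1}})^{\mathcal{W}_{1}}+(D_{X}Y^{\mathcal{W}_{1}})^{\mathcal{W}_{2}}+(D_{X}Y^{\mathcal{W}_{1}})^{\mathcal{W}}$ via the unique decomposition (\ref{decomp-vect-fields}), the hypotheses kill the last two summands (the cross-term by the third equation, the vertical term by the first), so $D_{X}Y^{\mathcal{W}_{1}}\in\chi(\mathcal{H}_{\mathcal{T}})$; similarly $D_{X}Y^{\mathcal{W}_{2}}\in\chi(\mathcal{H}_{M})$ and $D_{X}Y^{\mathcal{W}}\in\chi(\mathcal{W})$. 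These are exactly the three preservation conditions defining an $N$-linear connection. The only mild subtlety — the step I expect to be the main bookkeeping obstacle rather than a genuine difficulty — is keeping the index conventions $\mathcal{W}_{1}=\mathcal{H}_{\mathcal{T}}$, $\mathcal{W}_{2}=\mathcal{H}_{M}$, $\mathcal{W}_{0}=\mathcal{W}$ straight and verifying that the four stated equations exhaust all the off-diagonal projections of $D_{X}$ acting on the three adapted types, so that nothing is missed in the converse; this is handled cleanly by invoking the uniqueness of the decomposition (\ref{decomp-vect-fields}) and the projection algebra (\ref{projections}).
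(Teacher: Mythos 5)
Your proof is correct and follows the route the paper intends: the paper states this theorem without an explicit proof, as an immediate consequence of the preceding characterization $D_{X}h_{\mathcal{T}}=0,\ D_{X}h_{M}=0,\ D_{X}w=0$ and the projection algebra (\ref{projections}). Your two directions --- projecting the preservation identities (\ref{D-preserves distrib}) to kill the off-diagonal components, and conversely reassembling $D_{X}Y^{\mathcal{W}_{\alpha}}$ via the unique decomposition (\ref{decomp-vect-fields}) --- are exactly that implicit argument spelled out.
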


Consequently, using the adapted basis of vector fields on $E^{\ast }$, given
by (\ref{ad-basis-nlc}), and the above results, we prove without difficulties

\begin{theorem}
An $N$-linear connection can be uniquely written in the adapted basis of
vector fields on $E^{\ast \text{ }}$with \textbf{9} adapted coefficients
given by the relations:%
\begin{eqnarray*}
D_{\dfrac{\delta }{\delta t^{c}}}\frac{\delta }{\delta t^{b}} &=&A_{bc}^{a}%
\frac{\delta }{\delta t^{a}},\text{ }D_{\dfrac{\delta }{\delta t^{c}}}\frac{%
\delta }{\delta x^{j}}=A_{jc}^{i}\frac{\delta }{\delta x^{i}},\text{ }D_{%
\dfrac{\delta }{\delta t^{c}}}\frac{\partial }{\partial p_{j}^{b}}%
=-A_{\left( i\right) \left( b\right) c}^{\left( a\right) \left( j\right) }%
\frac{\partial }{\partial p_{i}^{a}}, \\
D_{\dfrac{\delta }{\delta x^{k}}}\frac{\delta }{\delta t^{b}} &=&H_{bk}^{a}%
\frac{\delta }{\delta t^{a}},\text{ }D_{\dfrac{\delta }{\delta x^{k}}}\frac{%
\delta }{\delta x^{j}}=H_{jk}^{i}\frac{\delta }{\delta x^{i}},\text{ }D_{%
\dfrac{\delta }{\delta x^{k}}}\frac{\partial }{\partial p_{j}^{b}}%
=-H_{\left( i\right) \left( b\right) k}^{\left( a\right) \left( j\right) }%
\frac{\partial }{\partial p_{i}^{a}}, \\
D_{\dfrac{\partial }{\partial p_{k}^{c}}}\frac{\delta }{\delta t^{b}}
&=&C_{b\left( c\right) }^{a\left( k\right) }\frac{\delta }{\delta t^{a}},%
\text{ }D_{\dfrac{\partial }{\partial p_{k}^{c}}}\frac{\delta }{\delta x^{j}}%
=C_{j\left( c\right) }^{i\left( k\right) }\frac{\delta }{\delta x^{i}},\text{
}D_{\dfrac{\partial }{\partial p_{k}^{c}}}\frac{\partial }{\partial p_{j}^{b}%
}=-C_{\left( i\right) \left( b\right) \left( c\right) }^{\left( a\right)
\left( j\right) \left( k\right) }\frac{\partial }{\partial p_{i}^{a}}.
\end{eqnarray*}
\end{theorem}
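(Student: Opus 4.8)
The plan is to begin from the completely general expansion of an arbitrary linear connection $D$ in the adapted basis, namely the $27$ coefficients displayed in (\ref{D-temporal}), (\ref{D-spatial}) and (\ref{D-vertical}), and then to impose the defining property of an $N$-linear connection through the preservation relations (\ref{D-preserves distrib}). The key observation is that each element of the adapted basis lies entirely inside a single distribution: $\delta/\delta t^{b}\in\chi(\mathcal{H}_{\mathcal{T}})$, $\delta/\delta x^{j}\in\chi(\mathcal{H}_{M})$, and $\partial/\partial p_{j}^{b}\in\chi(\mathcal{W})$. Consequently (\ref{D-preserves distrib}) forces the covariant derivative of each such basis field, taken in any direction $X$, to remain inside its own distribution, and this is exactly what collapses the eighteen spurious coefficients.

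Concretely, I would run through the nine cases one differentiation direction at a time. Taking $X=\delta/\delta t^{c}$: the field $\delta/\delta t^{b}$ is $\mathcal{T}$-horizontal, so $D_{\delta/\delta t^{c}}(\delta/\delta t^{b})=h_{\mathcal{T}}\,D_{\delta/\delta t^{c}}(\delta/\delta t^{b})\in\chi(\mathcal{H}_{\mathcal{T}})$; comparing with the first line of (\ref{D-temporal}) annihilates $A_{bc}^{i}$ and $A_{(i)bc}^{(a)}$ and retains only $A_{bc}^{a}$. The same reasoning applied to $\delta/\delta x^{j}$, which is $M$-horizontal, retains only $A_{jc}^{i}$, while applied to $\partial/\partial p_{j}^{b}$, which is vertical, retains only $A_{(i)(b)c}^{(a)(j)}$, the two horizontal coefficients $A_{(b)c}^{a(j)}$ and $A_{(b)c}^{i(j)}$ being killed. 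Repeating the identical argument with $X=\delta/\delta x^{k}$ and with $X=\partial/\partial p_{k}^{c}$ trims the blocks (\ref{D-spatial}) and (\ref{D-vertical}) in exactly the same fashion, so that each of the nine pairs (direction, differentiated basis field) keeps precisely one surviving family of coefficients. This produces the nine families $A_{bc}^{a}$, $A_{jc}^{i}$, $A_{(i)(b)c}^{(a)(j)}$, $H_{bk}^{a}$, $H_{jk}^{i}$, $H_{(i)(b)k}^{(a)(j)}$, $C_{b(c)}^{a(k)}$, $C_{j(c)}^{i(k)}$, $C_{(i)(b)(c)}^{(a)(j)(k)}$, together with the displayed derivative formulas.

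For the uniqueness, I would invoke the direct-sum decomposition (\ref{decomp-vect-fields}) and the fact that $\{\delta/\delta t^{a},\delta/\delta x^{i},\partial/\partial p_{i}^{a}\}$ is a genuine basis: the adapted components of any vector field are therefore uniquely determined, so the nine surviving coefficient families are uniquely read off from $D$. Conversely, prescribing nine such families and declaring the listed rules on the basis extends, by the Leibniz rule and $\mathcal{F}(E^{\ast})$-bilinearity, to a well-defined linear connection which manifestly satisfies (\ref{D-preserves distrib}); this shows the nine coefficients are a faithful parameterization of $N$-linear connections.

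I do not anticipate a genuine conceptual obstacle here, since once (\ref{D-preserves distrib}) is available the argument is essentially bookkeeping. The single point requiring care is the sign convention baked into the third line of each block: the derivatives $D_{\bullet}(\partial/\partial p_{j}^{b})$ are written with a leading minus sign in (\ref{D-temporal})--(\ref{D-vertical}), so I would make sure that the retained vertical coefficients are transcribed with the correct sign, reproducing the stated $-A_{(i)(b)c}^{(a)(j)}$, $-H_{(i)(b)k}^{(a)(j)}$ and $-C_{(i)(b)(c)}^{(a)(j)(k)}$ rather than their negatives.
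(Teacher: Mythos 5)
Your proposal is correct and follows essentially the same route as the paper: the authors likewise derive the nine-coefficient form by applying the distribution-preservation characterization (the relations (\ref{D-preserves distrib}) and the preceding vanishing conditions $\left( D_{X}Y^{\mathcal{W}_{\beta }}\right)^{\mathcal{W}}=0$, etc.) to the adapted basis, collapsing the general $27$ coefficients of (\ref{D-temporal})--(\ref{D-vertical}) to the surviving nine, a step they describe as proved \emph{without difficulties}. Your extra care with the minus-sign convention on $D_{\bullet }\left( \partial /\partial p_{j}^{b}\right) $ and the converse reconstruction via the Leibniz rule are consistent with (and slightly more explicit than) the paper's treatment.
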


\begin{definition}
The local functions%
\begin{equation}
D\Gamma \left( N\right) =\left( A_{bc}^{a},A_{jc}^{i},A_{\left( i\right)
\left( b\right) c}^{\left( a\right) \left( j\right)
},H_{bk}^{a},H_{jk}^{i},H_{\left( i\right) \left( b\right) k}^{\left(
a\right) \left( j\right) },C_{b\left( c\right) }^{a\left( k\right)
},C_{j\left( c\right) }^{i\left( k\right) },C_{\left( i\right) \left(
b\right) \left( c\right) }^{\left( a\right) \left( j\right) \left( k\right)
}\right)  \label{Local-descr-D-Nine-Coeff}
\end{equation}%
are called the \textbf{adapted coefficients} of the $N$-linear connection $D$
on $E^{\ast }.$
\end{definition}

Taking into acount the transformation laws (\ref{tr-laws-ad-basis}) of the
d-vector fields of the adapted basis (\ref{ad-basis-nlc}), by a
straightforward calculation, we obtain

\begin{theorem}
i) With respect to the coordinate transformations (\ref{tr-group-dual-jet})
on $E^{\ast }$, the adapted coefficients of the $N$-linear connection $%
D\Gamma \left( N\right) $ obey the following rules of
transformations:\bigskip

$\left( h_{\mathcal{T}\text{ }}\right) $ $\left\{ 
\begin{array}{l}
A_{bc}^{d}=\tilde{A}_{fg}^{a}\dfrac{\partial \tilde{t}^{f}}{\partial t^{b}}%
\dfrac{\partial \tilde{t}^{g}}{\partial t^{c}}\dfrac{\partial t^{d}}{%
\partial \tilde{t}^a}+\dfrac{\partial t^{d}}{\partial \tilde{t}^a}\dfrac{%
\partial ^{2}\tilde{t}^{a}}{\partial t^{b}\partial t^{c}},\medskip \\ 
A_{jc}^{i}=\tilde{A}_{ld}^{k}\dfrac{\partial x^{i}}{\partial \tilde{x}^{k}}%
\dfrac{\partial \tilde{x}^{l}}{\partial x^{j}}\dfrac{\partial \tilde{t}^{d}}{%
\partial t^{c}},\medskip \\ 
A_{\left( i\right) \left( b\right) c}^{\left( a\right) \left( j\right) }=%
\tilde{A}_{\left( k\right) \left( f\right) g}^{\left( d\right) \left(
l\right) }\dfrac{\partial t^{a}}{\partial \tilde{t}^{d}}\dfrac{\partial 
\tilde{x}^{k}}{\partial x^{i}}\dfrac{\partial x^{j}}{\partial \tilde{x}^{l}}%
\dfrac{\partial \tilde{t}^{f}}{\partial t^{b}}\dfrac{\partial \tilde{t}^{g}}{%
\partial t^{c}}-\delta _{i}^{j}\dfrac{\partial t^{a}}{\partial \tilde{t}^{d}}%
\dfrac{\partial ^{2}\tilde{t}^{d}}{\partial t^{b}\partial t^{c}},%
\end{array}%
\right. \bigskip$

$\left( h_{M}\right) $ $\left\{ 
\begin{array}{l}
H_{bk}^{a}=\tilde{H}_{dl}^{c}\dfrac{\partial t^{a}}{\partial \tilde{t}^{c}}%
\dfrac{\partial \tilde{t}^{d}}{\partial t^{b}}\dfrac{\partial \tilde{x}^{l}}{%
\partial x^{k}},\medskip \\ 
H_{jk}^{l}=\tilde{H}_{rs}^{i}\dfrac{\partial \tilde{x}^{r}}{\partial x^{j}}%
\dfrac{\partial \tilde{x}^{s}}{\partial x^{k}}\dfrac{\partial x^{l}}{%
\partial \tilde{x}^i}+\dfrac{\partial x^{l}}{\partial \tilde{x}^i}\dfrac{%
\partial ^{2}\tilde{x}^{i}}{\partial x^{j}\partial x^{k}},\medskip \\ 
H_{\left( i\right) \left( b\right) k}^{\left( a\right) \left( j\right) }=%
\tilde{H}_{\left( r\right) \left( g\right) s}^{\left( f\right) \left(
l\right) }\dfrac{\partial t^{a}}{\partial \tilde{t}^{f}}\dfrac{\partial 
\tilde{x}^{r}}{\partial x^{i}}\dfrac{\partial x^{j}}{\partial \tilde{x}^{l}}%
\dfrac{\partial \tilde{t}^{g}}{\partial t^{b}}\dfrac{\partial \tilde{x}^{s}}{%
\partial x^{k}}-\delta _{b}^{a}\dfrac{\partial \tilde{x}^{r}}{\partial x^{i}}%
\dfrac{\partial \tilde{x}^{s}}{\partial x^{k}}\dfrac{\partial ^{2}x^{j}}{%
\partial \tilde{x}^{r}\partial \tilde{x}^{s}},%
\end{array}%
\right. \bigskip$

$\left( \text{ }w_{\text{ }}\right) $ $\left\{ 
\begin{array}{l}
C_{b\left( c\right) }^{a\left( k\right) }=\tilde{C}_{f\left( g\right)
}^{d\left( r\right) }\dfrac{\partial t^{a}}{\partial \tilde{t}^{d}}\dfrac{%
\partial \tilde{t}^{f}}{\partial t^{b}}\dfrac{\partial x^{k}}{\partial 
\tilde{x}^{r}}\dfrac{\partial \tilde{t}^{g}}{\partial t^{c}},\medskip \\ 
C_{j\left( c\right) }^{i\left( k\right) }=\tilde{C}_{l\left( f\right)
}^{r\left( s\right) }\dfrac{\partial x^{i}}{\partial \tilde{x}^{r}}\dfrac{%
\partial \tilde{x}^{l}}{\partial x^{j}}\dfrac{\partial \tilde{t}^{f}}{%
\partial t^{c}}\dfrac{\partial x^{k}}{\partial \tilde{x}^{s}},\medskip \\ 
C_{\left( i\right) \left( b\right) \left( c\right) }^{\left( a\right) \left(
j\right) \left( k\right) }=\tilde{C}_{\left( l\right) \left( f\right) \left(
g\right) }^{\left( d\right) \left( r\right) \left( s\right) }\dfrac{\partial
t^{a}}{\partial \tilde{t}^{d}}\dfrac{\partial \tilde{x}^{l}}{\partial x^{i}}%
\dfrac{\partial x^{j}}{\partial \tilde{x}^{r}}\dfrac{\partial \tilde{t}^{f}}{%
\partial t^{b}}\dfrac{\partial x^{k}}{\partial \tilde{x}^{s}}\dfrac{\partial 
\tilde{t}^{g}}{\partial t^{c}}.%
\end{array}%
\right. \bigskip$

ii) Conversely, to give an $N$-linear connection $D$ on $E^{\ast }$ is
equivalent to give a set of nine local coefficients $D\Gamma \left( N\right) 
$ as in (\ref{Local-descr-D-Nine-Coeff}), which transform by the rules
described in i).
\end{theorem}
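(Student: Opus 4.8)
The plan is to read off each of the nine transformation rules directly from the nine defining relations of the adapted coefficients (the displayed equations preceding (\ref{Local-descr-D-Nine-Coeff})), by expressing both sides in the new coordinates via the tensorial laws (\ref{tr-laws-ad-basis}) for the adapted basis and then invoking only the four defining axioms of a linear connection. Since the three blocks $(h_{\mathcal{T}})$, $(h_M)$, $(w)$ are treated identically, I would carry out one representative computation in full and indicate that the other eight are mechanically analogous.

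Concretely, for the first rule I would start from $D_{\delta/\delta t^{c}}(\delta/\delta t^{b})=A^{a}_{bc}\,\delta/\delta t^{a}$ and substitute $\delta/\delta t^{c}=(\partial\tilde t^{g}/\partial t^{c})\,\delta/\delta\tilde t^{g}$ and $\delta/\delta t^{b}=(\partial\tilde t^{f}/\partial t^{b})\,\delta/\delta\tilde t^{f}$. Pulling the scalar $\partial\tilde t^{g}/\partial t^{c}$ out of the first argument (axiom $D_{fX}Y=fD_{X}Y$) and applying the Leibniz rule $D_{X}(fY)=X(f)Y+fD_{X}Y$ to the factor $\partial\tilde t^{f}/\partial t^{b}$ in the second argument, I would insert the tilde version $D_{\delta/\delta\tilde t^{g}}(\delta/\delta\tilde t^{f})=\tilde A^{e}_{fg}\,\delta/\delta\tilde t^{e}$ and compare the coefficients of $\delta/\delta\tilde t^{d}$ against the right-hand side rewritten as $A^{a}_{bc}(\partial\tilde t^{d}/\partial t^{a})\,\delta/\delta\tilde t^{d}$. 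Contracting with the inverse Jacobian $\partial t^{d}/\partial\tilde t^{a}$ then produces exactly the stated formula, the inhomogeneous term $(\partial t^{d}/\partial\tilde t^{a})(\partial^{2}\tilde t^{a}/\partial t^{b}\partial t^{c})$ being precisely the Leibniz contribution $(\delta/\delta\tilde t^{g})(\partial\tilde t^{f}/\partial t^{b})$.

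The single observation that organises everything, and that predicts in advance which rules are tensorial and which carry a second-derivative correction, is the action of the basis fields on Jacobian factors: a vertical field $\partial/\partial p^{c}_{k}$ annihilates every function of $(t^{a},x^{i})$; and on base functions $\delta/\delta t^{c}$ sees only $\partial/\partial t^{c}$ while $\delta/\delta x^{k}$ sees only $\partial/\partial x^{k}$, their vertical parts being killed. Hence the Leibniz term survives only when the differentiating direction matches a genuinely dependent Jacobian: differentiating along $\partial/\partial p^{c}_{k}$ leaves all three $C$-rules purely tensorial; differentiating along $\delta/\delta t^{c}$ lets only the $t$-Jacobians survive, yielding the $\partial^{2}\tilde t/\partial t\,\partial t$ corrections in the $A$-block; differentiating along $\delta/\delta x^{k}$ lets only the $x$-Jacobians survive, yielding the $\partial^{2}\tilde x/\partial x\,\partial x$ and $\partial^{2}x/\partial\tilde x\,\partial\tilde x$ corrections in the $H$-block, while the mixed coefficients $A^{i}_{jc}$ and $H^{a}_{bk}$ come out tensorial because there the surviving Jacobian is constant along the differentiation direction. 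For part ii) I would run this backwards: given nine functions obeying the rules, define $D$ on the adapted basis by the nine relations and extend by additivity, $\mathcal{F}(E^{\ast})$-linearity in the first slot, and the Leibniz rule in the second; the transformation rules are exactly the cocycle conditions making these local prescriptions agree on overlaps, so $D$ is global, the four axioms hold by construction, and the preservation conditions (\ref{D-preserves distrib}) are immediate since each defining relation maps an $\mathcal{H}_{\mathcal{T}}$-, $\mathcal{H}_{M}$- or $\mathcal{W}$-basis field into the distribution of the same type.

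I expect the principal obstacle to be organisational rather than conceptual: the disciplined index bookkeeping across the three blocks, and in particular tracking exactly which Leibniz term is non-zero so that the correct inhomogeneous terms — and no spurious ones — appear. The most delicate cases are $A^{(a)(j)}_{(i)(b)c}$ and $H^{(a)(j)}_{(i)(b)k}$, where the basis field $\partial/\partial p^{b}_{j}$ transforms with the product $(\partial\tilde t/\partial t)(\partial x/\partial\tilde x)$ and one must isolate the single surviving second derivative while verifying that the complementary factor merely reassembles into the homogeneous part.
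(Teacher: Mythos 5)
Your proposal is correct and follows exactly the route the paper intends: the paper itself derives this theorem ``by a straightforward calculation'' from the tensorial transformation laws (\ref{tr-laws-ad-basis}) of the adapted basis combined with the connection axioms, which is precisely your computation, and your organising observation about when Leibniz terms survive (vertical fields kill all Jacobians, $\delta/\delta t^{c}$ sees only $t$-Jacobians, $\delta/\delta x^{k}$ only $x$-Jacobians) correctly predicts all and only the inhomogeneous second-derivative terms in the stated rules. Your sketch of part ii), with the transformation rules serving as the compatibility (cocycle) conditions for gluing the locally defined connection, is likewise the standard and intended argument.
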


The following result proves the existence of the $N$-linear connections on $%
E^{\ast }.$

\begin{theorem}
If the manifolds $\mathcal{T}$ and $M$ are paracompacts, then there exists
an $N$-linear connection on $E^{\ast }.$
\end{theorem}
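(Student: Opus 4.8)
The plan is to establish existence by a standard partition-of-unity argument, exploiting the fact that we have already characterized $N$-linear connections by their nine adapted coefficients together with the transformation rules in the preceding theorem. The paracompactness hypothesis is precisely what guarantees the existence of a partition of unity subordinate to any open cover, and this is the analytic engine of the proof.

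First I would fix a nonlinear connection $N$ on $E^{\ast}$ (which exists, for instance taking the canonical one attached to a pair of semi-Riemannian metrics as in the Example, since paracompact manifolds admit such metrics), so that the adapted basis $\left\{ \delta/\delta t^{a},\delta/\delta x^{i},\partial/\partial p_{i}^{a}\right\}$ and its tensorial transformation laws are available. Next I would take an atlas $\left\{ \left( U_{\alpha},\phi_{\alpha}\right) \right\} _{\alpha}$ of $E^{\ast}$ arising from coordinate domains on $\mathcal{T}\times M$, and on each chart $U_{\alpha}$ I would define a \emph{local} $N$-linear connection $D^{(\alpha)}$ simply by declaring all nine adapted coefficients to be zero, except for the terms forced to be nonzero by the inhomogeneous (non-tensorial) parts of the transformation rules. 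Concretely, on each $U_{\alpha}$ one sets $A_{bc}^{a},H_{jk}^{i}$ equal to the natural Christoffel-type expressions and the remaining coefficients so that (\ref{Local-descr-D-Nine-Coeff}) holds; the point is that a local $N$-linear connection trivially exists on any single coordinate chart.

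The heart of the argument is the gluing. Let $\left\{ \psi_{\alpha}\right\}$ be a smooth partition of unity subordinate to $\left\{ U_{\alpha}\right\}$, so that $\sum_{\alpha}\psi_{\alpha}=1$ and $\operatorname{supp}\psi_{\alpha}\subset U_{\alpha}$. I would then define $D:=\sum_{\alpha}\psi_{\alpha}D^{(\alpha)}$, meaning $D_{X}Y:=\sum_{\alpha}\psi_{\alpha}\,D^{(\alpha)}_{X}Y$. The main obstacle, and the step requiring genuine care, is that a convex combination of linear connections is again a linear connection precisely because the \emph{inhomogeneous} term $X(f)Y$ in the Leibniz rule (property 4) is independent of $\alpha$: when one computes $D_{X}(fY)=\sum_{\alpha}\psi_{\alpha}\left[ X(f)Y+fD^{(\alpha)}_{X}Y\right] =X(f)Y\sum_{\alpha}\psi_{\alpha}+f\sum_{\alpha}\psi_{\alpha}D^{(\alpha)}_{X}Y=X(f)Y+fD_{X}Y$, the partition-of-unity identity $\sum_{\alpha}\psi_{\alpha}=1$ collapses the extra terms correctly. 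The additivity and $\mathcal{F}\left( E^{\ast}\right)$-linearity in $X$ (properties 1--3) are immediate since they are $\mathcal{F}$-bilinear and pass through the sum.

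Finally I would verify that $D$ is not merely a linear connection but an $N$-linear one, i.e.\ that it preserves the distributions $\mathcal{H}_{\mathcal{T}},\mathcal{H}_{M},\mathcal{W}$ by parallelism. This follows because each $D^{(\alpha)}$ satisfies (\ref{D-preserves distrib}), and the conditions $D_{X}h_{\mathcal{T}}=0$, $D_{X}h_{M}=0$, $D_{X}w=0$ of the characterization theorem are $\mathcal{F}\left( E^{\ast}\right)$-linear in the connection; the combination $\sum_{\alpha}\psi_{\alpha}D^{(\alpha)}$ therefore inherits them once $\sum_{\alpha}\psi_{\alpha}=1$ is used. Hence $D$ preserves the three distributions and is an $N$-linear connection on $E^{\ast}$, which completes the proof.
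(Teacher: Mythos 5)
Your proof is correct, but it follows a genuinely different route from the paper's. The paper does not glue anything: it uses paracompactness only once, to obtain linear connections $\chi _{bc}^{a}(t)$ on $\mathcal{T}$ and $\Gamma _{jk}^{i}(x)$ on $M$, from which it builds the explicit nonlinear connection $\overset{B}{N}$ of (\ref{nlc-assoc-to linear connections}) and then writes down a single explicit $N$-linear connection, the canonical Berwald connection $B\Gamma \left( \overset{B}{N}\right) $ of (\ref{Berwald-connection})--(\ref{Def-Berwald-Connection}), whose nine adapted coefficients are $\left( \chi _{bc}^{a},0,-\delta _{i}^{j}\chi _{bc}^{a},0,\Gamma _{jk}^{i},\delta _{b}^{a}\Gamma _{ik}^{j},0,0,0\right) $; one checks directly that these obey the transformation rules. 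Your partition-of-unity argument instead works on the total space: local $N$-linear connections exist trivially on each adapted chart, and the conditions $D_{X}h_{\mathcal{T}}=0$, $D_{X}h_{M}=0$, $D_{X}w=0$ are affine constraints, hence stable under convex combinations once $\sum_{\alpha }\psi _{\alpha }=1$ — your Leibniz-rule computation is exactly the coefficient-level statement that the inhomogeneous terms in the transformation laws are the same for every $\alpha $ and so survive the averaging. Two points to tighten: first, within a single chart \emph{nothing} is ``forced to be nonzero'' — the inhomogeneous parts of the transformation rules only constrain coefficients on chart overlaps, so your local model should simply be the zero-coefficient connection in the adapted frame, and the half-sentence about Christoffel-type expressions should be dropped as it contradicts the all-zero prescription; second, you need a partition of unity on $E^{\ast }$ itself, so you should either remark that $E^{\ast }$ is paracompact (it is a vector bundle over the metrizable manifold $\mathcal{T}\times M$) or, more economically, pull back via $\pi ^{\ast }$ a partition of unity subordinate to a cover of $\mathcal{T}\times M$. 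As for what each approach buys: the paper's construction is canonical and is reused throughout the sequel (the Berwald covariant derivatives $"_{//a}"$, $"_{||i}"$, $"\parallel _{\left( a\right) }^{\left( i\right) }"$ and the torsion and curvature examples are all computed for $B\Gamma \left( \overset{B}{N}\right) $), whereas your argument produces a non-canonical connection but proves something slightly stronger and more flexible: an $N$-linear connection exists for an \emph{arbitrary} prescribed nonlinear connection $N$, not only for the particular $\overset{B}{N}$ attached to the base linear connections.
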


\begin{proof}
Because the manifold $\mathcal{T}$ (resp. $M$) is paracompact, there exists
a linear connection on $\mathcal{T}$ (resp. $M$), whose local coefficients
we denote by $\chi _{bc}^{a}\left( t\right) $ (resp. $\Gamma _{jk}^{i}\left(
x\right) $). Let us consider the local coefficients%
\begin{equation}
\overset{B}{\underset{1}{N}}\underset{}{\overset{_{{}}}{_{\left( i\right)
b}^{(a)}}}=\chi _{cb}^{a}p_{i}^{c},\ \overset{B}{\underset{2}{N}}\underset{}{%
\overset{_{{}}}{_{\left( j\right) k}^{(b)}}}=-\Gamma _{jk}^{i}p_{i}^{b},
\label{nlc-assoc-to linear connections}
\end{equation}%
which define a nonlinear connection $\overset{B}{N}$ on $E^{\ast }$. We set%
\begin{equation}
B\Gamma \left( \overset{B}{N}\right) =\left( \chi _{bc}^{a},0,A_{\left(
i\right) \left( b\right) c}^{\left( a\right) \left( j\right) },0,\Gamma
_{jk}^{i},H_{\left( i\right) \left( b\right) k}^{\left( a\right) \left(
j\right) },0,0,0\right)  \label{Berwald-connection}
\end{equation}%
where%
\begin{equation}
A_{\left( i\right) \left( b\right) c}^{\left( a\right) \left( j\right)
}=-\delta _{i}^{j}\chi _{bc}^{a},\ H_{\left( i\right) \left( b\right)
k}^{\left( a\right) \left( j\right) }=\delta _{b}^{a}\Gamma _{ik}^{j}.
\label{Def-Berwald-Connection}
\end{equation}%
Then, $B\Gamma \left( \overset{B}{N}\right) $ defines an $N$-linear
connection on $E^{\ast }.$
\end{proof}

\begin{definition}
i) The $N$-linear connection $B\Gamma \left( \overset{B}{N}\right) $ on $%
E^{\ast }$, which is given by the relations (\ref{nlc-assoc-to linear
connections}), (\ref{Berwald-connection}) and (\ref{Def-Berwald-Connection}%
), is called the \textbf{canonical Berwald connection attached to the linear
connections} $\chi _{bc}^{a}\left( t\right) $ and $\Gamma _{jk}^{i}\left(
x\right) $.

ii) If we have $\overset{B}{N}=\overset{0}{N}$, where $\overset{0}{N}$ is
given by (\ref{can-nlc-asoc-to-metric}), then $B\Gamma \left( \overset{0}{N}%
\right) $ is called the \textbf{canonical Berwald connection attached to the
semi-Riemannian metrics} $h_{ab}(t)$ \textbf{and }$\varphi _{ij}(x).$
\end{definition}

Now, let us consider that $D$ is a fixed $N$-linear connection on $E^{\ast }$%
, defined by the adapted local coefficients (\ref{Local-descr-D-Nine-Coeff}%
). The linear connection $D\Gamma \left( N\right) $ naturally induces
derivations on the set of the d-tensor fields on the dual 1-jet bundle $%
E^{\ast }$, in the following way. Starting from a d-vector field $X\in \chi
\left( E^{\ast }\right) $ and a d-tensor field $T$ on \thinspace $E^{\ast }$%
, locally expressed by%
\begin{eqnarray*}
X &=&X^{a}\frac{\delta }{\delta t^{a}}+X^{i}\frac{\delta }{\delta x^{i}}%
+X_{\left( i\right) }^{\left( a\right) }\frac{\partial }{\partial p_{i}^{a}},
\\
T &=&T_{cj\left( b\right) \left( l\right) ...}^{ai\left( k\right) \left(
d\right) ...}\frac{\delta }{\delta t^{a}}\otimes \frac{\delta }{\delta x^{i}}%
\otimes \frac{\partial }{\partial p_{l}^{d}}\otimes dt^{c}\otimes
dx^{j}\otimes \delta p_{k}^{b}\otimes ...,
\end{eqnarray*}%
we obtain

\begin{equation*}
\begin{array}{l}
D_{X}T=X^{g}D_{\dfrac{\delta }{\delta t^{g}}}T+X^{s}D_{\dfrac{\delta }{%
\delta x^{s}}}T+X_{\left( s\right) }^{\left( g\right) }D_{\dfrac{\partial }{%
\partial p_{s}^{g}}}T=\medskip \\ 
=\left\{ X^{g}T_{cj\left( b\right) \left( l\right) .../g}^{ai\left( k\right)
\left( d\right) ...}+X^{s}T_{cj\left( b\right) \left( l\right)
...|s}^{ai\left( k\right) \left( d\right) ...}+\right. \medskip \\ 
\left. +X_{\left( s\right) }^{\left( g\right) }T_{cj\left( b\right) \left(
l\right) ...}^{ai\left( k\right) \left( d\right) ...}\mid _{\left( g\right)
}^{\left( s\right) }\right\} \dfrac{\delta }{\delta t^{a}}\otimes \dfrac{%
\delta }{\delta x^{i}}\otimes \dfrac{\partial }{\partial p_{l}^{d}}\otimes
dt^{c}\otimes dx^{j}\otimes \delta p_{k}^{b}\otimes ...,%
\end{array}%
\end{equation*}%
where\medskip

$\left( h_{\mathcal{T}}\text{ }\right) $ $\left\{ 
\begin{array}{l}
T_{cj\left( b\right) \left( l\right) .../g}^{ai\left( k\right) \left(
d\right) ...}=\dfrac{\delta T_{cj\left( b\right) \left( l\right)
...}^{ai\left( k\right) \left( d\right) ...}}{\delta t^{g}}+T_{cj\left(
b\right) \left( l\right) ...}^{fi\left( k\right) \left( d\right)
...}A_{fg}^{a}+\medskip \\ 
+T_{cj\left( b\right) \left( l\right) ...}^{ar\left( k\right) \left(
d\right) ...}A_{rg}^{i}+T_{cj\left( f\right) \left( l\right) ...}^{ai\left(
r\right) \left( d\right) ...}A_{\left( r\right) \left( b\right) g}^{\left(
f\right) \left( k\right) }+...-\medskip \\ 
-T_{fj\left( b\right) \left( l\right) ...}^{ai\left( k\right) \left(
d\right) ...}A_{cg}^{f}-T_{cr\left( b\right) \left( l\right) ...}^{ai\left(
k\right) \left( d\right) ...}A_{jg}^{r}-T_{cj\left( b\right) \left( r\right)
...}^{ai\left( k\right) \left( f\right) ...}A_{\left( l\right) \left(
f\right) g}^{\left( d\right) \left( r\right) }-...,%
\end{array}%
\right. \medskip$

$\left( h_{M}\right) $ $\left\{ 
\begin{array}{l}
T_{cj\left( b\right) \left( l\right) ...|s}^{ai\left( k\right) \left(
d\right) ...}=\dfrac{\delta T_{cj\left( b\right) \left( l\right)
...}^{ai\left( k\right) \left( d\right) ...}}{\delta x^{s}}+T_{cj\left(
b\right) \left( l\right) ...}^{fi\left( k\right) \left( d\right)
...}H_{fs}^{a}+\medskip \\ 
+T_{cj\left( b\right) \left( l\right) ...}^{ar\left( k\right) \left(
d\right) ...}H_{rs}^{i}+T_{cj\left( f\right) \left( l\right) ...}^{ai\left(
r\right) \left( d\right) ...}H_{\left( r\right) \left( b\right) s}^{\left(
f\right) \left( k\right) }+...-\medskip \\ 
-T_{fj\left( b\right) \left( l\right) ...}^{ai\left( k\right) \left(
d\right) ...}H_{cs}^{f}-T_{cr\left( b\right) \left( l\right) ...}^{ai\left(
k\right) \left( d\right) ...}H_{js}^{r}-T_{cj\left( b\right) \left( r\right)
...}^{ai\left( k\right) \left( f\right) ...}H_{\left( l\right) \left(
f\right) s}^{\left( d\right) \left( r\right) }-...,%
\end{array}%
\right. \medskip $

$\left( \text{ }w_{\text{ }}\right) $ $\left\{ 
\begin{array}{l}
T_{cj\left( b\right) \left( l\right) ...}^{ai\left( k\right) \left( d\right)
...}\left\vert _{\left( g\right) }^{\left( s\right) }\right. =\dfrac{%
\partial T_{cj\left( b\right) \left( l\right) ...}^{ai\left( k\right) \left(
d\right) ...}}{\partial p_{s}^{g}}+T_{cj\left( b\right) \left( l\right)
...}^{fi\left( k\right) \left( d\right) ...}C_{f\left( g\right) }^{a\left(
s\right) }+\medskip \\ 
+T_{cj\left( b\right) \left( l\right) ...}^{ar\left( k\right) \left(
d\right) ...}C_{r\left( g\right) }^{i\left( s\right) }+T_{cj\left( f\right)
\left( l\right) ...}^{ai\left( r\right) \left( d\right) ...}C_{\left(
r\right) \left( b\right) \left( g\right) }^{\left( f\right) \left( k\right)
\left( s\right) }+...-\medskip \\ 
-T_{fj\left( b\right) \left( l\right) ...}^{ai\left( k\right) \left(
d\right) ...}C_{c\left( g\right) }^{f\left( s\right) }-T_{cr\left( b\right)
\left( l\right) ...}^{ai\left( k\right) \left( d\right) ...}C_{j\left(
g\right) }^{r\left( s\right) }-T_{cj\left( b\right) \left( r\right)
...}^{ai\left( k\right) \left( f\right) ...}C_{\left( l\right) \left(
f\right) \left( g\right) }^{\left( d\right) \left( r\right) \left( s\right)
}-...\text{.}%
\end{array}%
\right. $

\begin{definition}
The local derivative operators $"_{/a}",\ "_{|i}"$ and $"\mid _{\left(
a\right) }^{\left( i\right) }"$ are called the \textbf{$\mathcal{T}$%
-horizontal covariant derivative,} the $M$\textbf{-horizontal covariant
derivative} and the \textbf{vertical covariant derivative} \textbf{attached
to the }$N$\textbf{-linear connection }$D\Gamma \left( N\right) $. They are
applied to the local components of an arbitrary d-tensor $T$ on the dual
1-jet spaces $E^{\ast }$.
\end{definition}

By a direct calculation, we obtain

\begin{proposition}
The operators $"_{/a}",$ $"_{|i}"$ and $"\mid _{\left( a\right) }^{\left(
i\right) }"$ have the properties:

i) They are distributive with respect to the addition of the d-tensor fields
of the same type.

ii) They commute with the operation of contraction.

iii) They verify the Leibniz rule with respect to the tensor product.
\end{proposition}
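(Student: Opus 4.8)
The plan is to recognize the three component operators as the adapted components of covariant differentiation of a d-tensor along the vector fields of the adapted basis. Extending the given $N$-linear connection $D$ from d-vector fields to the whole tensor algebra of $E^{\ast}$ in the standard fashion, one has that the components of $D_{\delta/\delta t^{g}}T$, $D_{\delta/\delta x^{s}}T$ and $D_{\partial/\partial p_{s}^{g}}T$, read off in the adapted basis (\ref{ad-basis-nlc}) and cobasis (\ref{ad-cobasis-nlc}), are precisely $T^{\ldots}_{\ldots/g}$, $T^{\ldots}_{\ldots|s}$ and $T^{\ldots}_{\ldots}\mid^{(s)}_{(g)}$; this is exactly the content of the three local formulas $(h_{\mathcal{T}})$, $(h_{M})$, $(w)$ displayed just before the statement. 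Because $D$ is $N$-linear, relation (\ref{D-h-uri=0}) guarantees that each of these objects is again a genuine d-tensor field, so the three operators really do act on the algebra of d-tensor fields. It therefore suffices to prove (i)--(iii) for the single tensor derivation $X\mapsto D_{X}$ and then specialize $X$ to the members of the adapted basis.

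With this reduction, properties (i) and (iii) are essentially free. Additivity (i) follows from the third defining property of a linear connection, $D_{X}(Y_{1}+Y_{2})=D_{X}Y_{1}+D_{X}Y_{2}$, once it is propagated to tensor fields; equivalently, it can be read directly off the displayed formulas, whose right-hand sides are manifestly $\mathbb{R}$-linear in the components of $T$. The Leibniz rule (iii) with respect to the tensor product is the defining feature of the tensor derivation attached to $D$ and descends from the fourth property $D_{X}(fY)=X(f)Y+fD_{X}Y$. In components this is visible from the structure of the formulas: the leading term is an ordinary directional derivative $\delta/\delta t^{g}$, $\delta/\delta x^{s}$ or $\partial/\partial p_{s}^{g}$, which obeys the scalar Leibniz rule, while each connection-coefficient term acts on a single tensor slot, so that applying the operator to $T\otimes S$ reproduces $(DT)\otimes S+T\otimes(DS)$.

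The only property demanding genuine care is (ii), commutation with contraction, and this is exactly where the $N$-linearity is used. The key point is that the natural pairing between the adapted basis and cobasis --- encoded by the Kronecker symbols in the interior-product relations recorded earlier for the adapted basis and cobasis --- is parallel with respect to $D$. From (\ref{D-h-uri=0}) one derives, for each fixed basis direction $X$, identities of the form $\langle D_{X}\,dt^{a},\delta/\delta t^{b}\rangle + \langle dt^{a},D_{X}\,\delta/\delta t^{b}\rangle = X(\delta^{a}_{b})=0$, together with the analogous statements for the $M$-horizontal and the vertical pairings. Crucially, a $\mathcal{T}$-horizontal index may be contracted only against a $\mathcal{T}$-horizontal one, and likewise for the $M$-horizontal and vertical types; this is nothing but the preservation of the three distributions $\mathcal{H}_{\mathcal{T}}$, $\mathcal{H}_{M}$, $\mathcal{W}$ by $D$. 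In the local formulas it shows up as the pairwise cancellation, after contracting a matched upper/lower pair and summing, of the corresponding positive and negative connection-coefficient terms (for instance $+T^{f\ldots}A^{a}_{fg}$ against $-T_{f\ldots}A^{f}_{cg}$ when the $\mathcal{T}$-horizontal indices $a$ and $c$ are identified).

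I expect the sole obstacle to be the bookkeeping in (ii): one must verify the cancellation type by type and check that $N$-linearity forbids any cross-type connection term that could spoil it. Once the three operators are identified with $D_{\delta/\delta t^{g}}$, $D_{\delta/\delta x^{s}}$, $D_{\partial/\partial p_{s}^{g}}$ and the parallelism of the pairing is recorded, properties (i) and (iii) are immediate and (ii) reduces to this matched cancellation.
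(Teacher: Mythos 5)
Your proposal is correct, and it supplies more than the paper does: the paper's entire proof is the sentence ``By a direct calculation, we obtain,'' i.e.\ a brute-force verification on the explicit component formulas $(h_{\mathcal{T}})$, $(h_M)$, $(w)$ is asserted but never written out. Your structural reduction --- identify the three operators as the adapted components of $D_{\delta/\delta t^{g}}$, $D_{\delta/\delta x^{s}}$, $D_{\partial/\partial p_{s}^{g}}$ acting through the canonical extension of $D$ to the tensor algebra, get (i) and (iii) from the derivation axioms, and isolate (ii) as the one point needing computation --- is the right organization of that calculation, and your matched-cancellation check (relabelling the dummy in $+T^{f\ldots}_{\ldots}A^{a}_{fg}$ against $-T^{a\ldots}_{f\ldots}A^{f}_{ag}$ after identifying the contracted pair, and likewise for the $H$- and $C$-terms and for the vertical double indices) is exactly what the paper's ``direct calculation'' amounts to. Two small inaccuracies, neither fatal: the pairing identity $\left\langle D_{X}\,dt^{a},\delta/\delta t^{b}\right\rangle +\left\langle dt^{a},D_{X}\,\delta/\delta t^{b}\right\rangle =X(\delta_{a}^{b})=0$ is definitional for the induced connection on $1$-forms rather than a consequence of (\ref{D-h-uri=0}); what (\ref{D-h-uri=0}) actually buys, as you say later, is the absence of cross-type coefficients, so that contraction within a single type ($h_{\mathcal{T}}$, $h_M$, or $w$) sees only the two cancelling terms of that type. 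Likewise, the fact that $T_{\ldots/g}$, $T_{\ldots|s}$, $T_{\ldots}\mid^{(s)}_{(g)}$ are again d-tensors follows from the transformation laws of the nine coefficients in (\ref{Local-descr-D-Nine-Coeff}) (the paper's theorem preceding the covariant derivatives), not directly from (\ref{D-h-uri=0}); your appeal to the latter is a shorthand for this. With those attributions adjusted, your argument is complete and, if anything, preferable to the paper's unproved assertion, since it makes explicit where $N$-linearity is genuinely used.
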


\begin{remark}
i) If $T=f$ is a function on $E^{\ast }$, then the following expressions of
the local covariant derivatives hold good:%
\begin{equation*}
f_{/b}=\dfrac{\delta f}{\delta t^{b}}=\dfrac{\partial f}{\partial t^{b}}-%
\underset{1}{\overset{}{N}}\underset{}{\overset{}{_{\left( i\right) b}^{(a)}}%
}\dfrac{\partial f}{\partial p_{i}^{a}},\quad f_{|j}=\dfrac{\delta f}{\delta
x^{j}}=\dfrac{\partial f}{\partial x^{j}}-\underset{\left( 2\right) }{%
\overset{}{N}}\underset{}{\overset{_{{}}}{_{\left( i\right) j}^{(a)}}}\dfrac{%
\partial f}{\partial p_{i}^{a}},\quad f|_{\left( a\right) }^{\left( i\right)
}=\dfrac{\partial f}{\partial p_{i}^{a}}.
\end{equation*}

ii) If $T=Y$ is a d-vector field on $E^{\ast }$, locally expressed by%
\begin{equation*}
Y=Y^{a}\frac{\delta }{\delta t^{a}}+Y^{i}\frac{\delta }{\delta x^{i}}%
+Y_{\left( i\right) }^{\left( a\right) }\frac{\partial }{\partial p_{i}^{a}},
\end{equation*}%
then the following expressions of the local covariant derivatives hold good:%
\begin{equation*}
\left( h_{\mathcal{T}}\right) \left\{ 
\begin{array}{l}
Y_{{}}^{a}\overset{}{_{/c}}=\dfrac{\delta Y^{a}}{\delta t^{c}}%
+Y^{b}A_{bc}^{a},\medskip \\ 
Y_{{}}^{i}\overset{}{_{/c}}=\dfrac{\delta Y^{i}}{\delta t^{c}}%
+Y^{j}A_{jc}^{i},\medskip \\ 
Y_{\left( i\right) /c}^{\left( a\right) }=\dfrac{\delta Y_{\left( i\right)
}^{\left( a\right) }}{\delta t^{c}}-Y_{\left( j\right) }^{\left( b\right)
}A_{\left( i\right) \left( b\right) c}^{\left( a\right) \left( j\right) },%
\end{array}%
\right. \text{ }\left( h_{M}\right) \left\{ 
\begin{array}{l}
Y_{{}}^{a}\overset{}{_{|k}}=\dfrac{\delta Y^{a}}{\delta x^{k}}%
+Y^{b}H_{bk}^{a},\medskip \\ 
Y_{{}}^{i}\overset{}{_{|k}}=\dfrac{\delta Y^{i}}{\delta x^{k}}%
+Y^{j}H_{jk}^{i},\medskip \\ 
Y_{\left( i\right) |k}^{\left( a\right) }=\dfrac{\delta Y_{\left( i\right)
}^{\left( a\right) }}{\delta x^{k}}-Y_{\left( j\right) }^{\left( b\right)
}H_{\left( i\right) \left( b\right) k}^{\left( a\right) \left( j\right) },%
\end{array}%
\right.
\end{equation*}%
\begin{equation*}
\left( w\right) \left\{ 
\begin{array}{l}
Y_{{}}^{i}\mid _{\left( c\right) }^{\left( k\right) }=\dfrac{\partial Y^{a}}{%
\partial p_{k}^{c}}+Y^{b}C_{b\left( c\right) }^{a\left( k\right) },\medskip
\\ 
Y_{{}}^{i}\mid _{\left( c\right) }^{\left( k\right) }=\dfrac{\partial Y^{i}}{%
\partial p_{k}^{c}}+Y^{j}C_{j\left( c\right) }^{i\left( k\right) },\medskip
\\ 
Y_{\left( i\right) }^{\left( a\right) }\mid _{\left( c\right) }^{\left(
k\right) }=\dfrac{\partial Y_{\left( i\right) }^{\left( a\right) }}{\partial
p_{k}^{c}}-Y_{\left( j\right) }^{\left( b\right) }C_{\left( i\right) \left(
b\right) \left( c\right) }^{\left( a\right) \left( j\right) \left( k\right)
}.%
\end{array}%
\right.
\end{equation*}%
iii) If $T=\omega $ is a d-covector field on $E^{\ast }$, locally expressed
by%
\begin{equation*}
\omega =\omega _{a}dx^{a}+\omega _{i}dx^{i}+\omega _{\left( a\right)
}^{\left( i\right) }\delta p_{i}^{a},
\end{equation*}%
then the following expressions of the local covariant derivatives hold good:%
\begin{equation*}
\left( h_{\mathcal{T}}\right) \left\{ 
\begin{array}{l}
\omega _{a/c}=\dfrac{\delta \omega _{a}}{\delta t^{c}}-A_{ac}^{b}\omega
_{b},\medskip \\ 
\omega _{i/c}=\dfrac{\delta \omega _{i}}{\delta t^{c}}-A_{ic}^{j}\omega
_{j},\medskip \\ 
\omega _{\left( a\right) /c}^{\left( i\right) }=\dfrac{\delta \omega
_{\left( a\right) }^{\left( i\right) }}{\delta t^{c}}+A_{\left( j\right)
\left( a\right) c}^{\left( b\right) \left( i\right) }\omega _{\left(
b\right) }^{\left( j\right) },%
\end{array}%
\right. \text{ }\left( h_{M}\right) \left\{ 
\begin{array}{l}
\omega _{a|k}=\dfrac{\delta \omega _{a}}{\delta x^{k}}-H_{ak}^{b}\omega
_{b},\medskip \\ 
\omega _{i|k}=\dfrac{\delta \omega _{i}}{\delta x^{k}}-H_{ik}^{j}\omega
_{j},\medskip \\ 
\omega _{\left( a\right) |k}^{\left( i\right) }=\dfrac{\delta \omega
_{\left( a\right) }^{\left( i\right) }}{\delta x^{k}}+H_{\left( j\right)
\left( a\right) k}^{\left( b\right) \left( i\right) }\omega _{\left(
b\right) }^{\left( j\right) },%
\end{array}%
\right.
\end{equation*}%
\begin{equation*}
\left( w\right) \left\{ 
\begin{array}{l}
\omega _{a}\mid _{\left( c\right) }^{\left( k\right) }=\dfrac{\partial
\omega _{a}}{\partial p_{k}^{c}}-C_{a\left( c\right) }^{b\left( k\right)
}\omega _{b},\medskip \\ 
\omega _{i}\mid _{\left( c\right) }^{\left( k\right) }=\dfrac{\partial
\omega _{i}}{\partial p_{k}^{c}}-C_{i\left( c\right) }^{j\left( k\right)
}\omega _{j},\medskip \\ 
\omega _{\left( a\right) }^{\left( i\right) }\mid _{\left( c\right)
}^{\left( k\right) }=\dfrac{\partial \omega _{\left( a\right) }^{\left(
i\right) }}{\partial p_{k}^{c}}+C_{\left( j\right) \left( a\right) \left(
c\right) }^{\left( b\right) \left( i\right) \left( k\right) }\omega
_{(b)}^{(j)}.%
\end{array}%
\right.
\end{equation*}%
iv)\ Taking into account that for any $1$-form $\omega \in \chi ^{\ast
}\left( E^{\ast }\right) $ we have, by definition, 
\begin{equation*}
\left( D_{X}\omega \right) \left( Y\right) =X\omega \left( Y\right) -\omega
\left( D_{X}Y\right) ,\text{ }\forall \text{ }X,Y\in \chi \left( E^{\ast
}\right) ,
\end{equation*}%
we find the rules of covariant derivatives for adapted cobasis (\ref%
{ad-cobasis-nlc}) of covector fields on $E^{\ast }$, as following:%
\begin{equation*}
D_{\dfrac{\delta }{\delta t^{c}}}dt^{a}=-A_{bc}^{a}dt^{b},\quad D_{\dfrac{%
\delta }{\delta t^{c}}}dx^{i}=-A_{jc}^{i}dx^{j},\quad D_{\dfrac{\delta }{%
\delta t^{c}}}\delta p_{i}^{a}=A_{\left( i\right) \left( b\right) c}^{\left(
a\right) \left( j\right) }\delta p_{j}^{b},
\end{equation*}%
\begin{equation*}
D_{\dfrac{\delta }{\delta x^{k}}}dt^{a}=-H_{bk}^{a}dt^{b},\quad D_{\dfrac{%
\delta }{\delta x^{k}}}dx^{i}=-H_{jk}^{i}dx^{j},\quad D_{\dfrac{\delta }{%
\delta x^{k}}}\delta p_{i}^{a}=H_{\left( i\right) \left( b\right) k}^{\left(
a\right) \left( j\right) }\delta p_{j}^{b},
\end{equation*}%
\begin{equation*}
D_{\dfrac{\partial }{\partial p_{k}^{c}}}dt^{a}=-C_{b\left( c\right)
}^{a\left( k\right) }dt^{b},\text{ }D_{\dfrac{\partial }{\partial p_{k}^{c}}%
}dx^{i}=-C_{j\left( c\right) }^{i\left( k\right) }dx^{j},\text{ }D_{\dfrac{%
\partial }{\partial p_{k}^{c}}}\delta p_{i}^{a}=C_{\left( i\right) \left(
b\right) \left( c\right) }^{\left( a\right) \left( j\right) \left( k\right)
}\delta p_{j}^{b}.
\end{equation*}%
v) In the particular case of the canonical Berwald $\overset{B}{N}$-linear
connection $B\Gamma \left( \overset{B}{N}\right) $, defined by the relations
(\ref{nlc-assoc-to linear connections}), (\ref{Berwald-connection}) and (\ref%
{Def-Berwald-Connection}), the local covariant derivatives are denoted by $%
"_{//a}","_{||i}"$ and $"\parallel _{\left( a\right) }^{\left( i\right) }".$
\end{remark}

Now, we shall give an application of this paragraph. In this direction, let
us consider the \textit{canonical Liouville-Hamilton d-tensor field of
polymomenta} on $E^{\ast }$, given by%
\begin{equation*}
\mathbb{C}^{\ast }=\mathbb{C}_{(i)}^{(a)}\frac{\partial }{\partial p_{i}^{a}}%
,
\end{equation*}%
where $\mathbb{C}_{(i)}^{(a)}=p_{i}^{a}.$

\begin{definition}
The d-tensor fields defined by 
\begin{equation}
\Delta _{\left( i\right) b}^{\left( a\right) }=\mathbb{C}_{(i)/b}^{(a)},%
\quad \Delta _{\left( i\right) j}^{\left( a\right) }=\mathbb{C}%
_{(i)|j}^{(a)},\quad \vartheta _{\left( i\right) \left( b\right) }^{\left(
a\right) \left( j\right) }=\mathbb{C}_{(i)}^{(a)}|_{\left( b\right)
}^{\left( j\right) },  \label{Def-Defl-tensors}
\end{equation}%
are called the \textbf{polymomentum deflection d-tensor fields attached to
the }$N$\textbf{-linear connection }$D\Gamma \left( N\right) $.
\end{definition}

By a direct calculation, we find

\begin{proposition}
The polymomentum deflection d-tensor fields on $E^{\ast }$, attached to the $%
N$-linear connection $D\Gamma \left( N\right) $, have the expressions%
\begin{equation}
\begin{array}{c}
\Delta _{\left( i\right) b}^{\left( a\right) }=-\underset{1}{\overset{}{N}}%
\underset{}{\overset{}{_{\left( i\right) b}^{(a)}}}%
-A_{(i)(c)b}^{(a)(k)}p_{k}^{c},\quad \Delta _{\left( i\right) j}^{\left(
a\right) }=-\underset{2}{\overset{}{N}}\underset{}{\overset{_{{}}}{_{\left(
i\right) j}^{(a)}}}-H_{(i)(c)j}^{(a)(k)}p_{k}^{c},\medskip \\ 
\vartheta _{\left( i\right) \left( b\right) }^{\left( a\right) \left(
j\right) }=\delta _{b}^{a}\delta _{i}^{j}-C_{(i)(c)(b)}^{(a)(k)(j)}p_{k}^{c}.%
\end{array}
\label{Expr-Deflection-Tensors}
\end{equation}
\end{proposition}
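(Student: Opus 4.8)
The plan is to compute the three covariant derivatives appearing in the definition (\ref{Def-Defl-tensors}) directly, by feeding the Liouville--Hamilton d-tensor into the covariant-derivative rules for a d-vector field listed in the Remark above. Since $\mathbb{C}^{\ast}$ has vanishing $\mathcal{T}$-horizontal and $M$-horizontal adapted components and only the vertical component $\mathbb{C}_{(i)}^{(a)}=p_{i}^{a}$, only the vertical-component rules $Y_{(i)/c}^{(a)}$, $Y_{(i)|k}^{(a)}$ and $Y_{(i)}^{(a)}\mid_{(c)}^{(k)}$ are needed. Substituting $Y_{(i)}^{(a)}=p_{i}^{a}$ (so that the surviving contraction factor $Y_{(j)}^{(b)}$ becomes $p_{j}^{b}$) and relabelling dummy indices gives
\[
\Delta_{(i)b}^{(a)}=\frac{\delta p_{i}^{a}}{\delta t^{b}}-p_{j}^{c}A_{(i)(c)b}^{(a)(j)},\quad \Delta_{(i)j}^{(a)}=\frac{\delta p_{i}^{a}}{\delta x^{j}}-p_{l}^{c}H_{(i)(c)j}^{(a)(l)},\quad \vartheta_{(i)(b)}^{(a)(j)}=\frac{\partial p_{i}^{a}}{\partial p_{j}^{b}}-p_{l}^{c}C_{(i)(c)(b)}^{(a)(l)(j)}.
\]

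Next I would evaluate the three leading derivative terms. For the horizontal ones I use the explicit form (\ref{form-of-delta-t-si-x}) of the adapted vector fields together with the functional independence of $p_{i}^{a}$ from the base coordinates, namely $\partial p_{i}^{a}/\partial t^{b}=0$, $\partial p_{i}^{a}/\partial x^{j}=0$, and the elementary identity $\partial p_{i}^{a}/\partial p_{j}^{c}=\delta_{i}^{j}\delta_{c}^{a}$. This yields
\[
\frac{\delta p_{i}^{a}}{\delta t^{b}}=\frac{\partial p_{i}^{a}}{\partial t^{b}}-\underset{1}{N}{}_{(j)b}^{(c)}\frac{\partial p_{i}^{a}}{\partial p_{j}^{c}}=-\underset{1}{N}{}_{(i)b}^{(a)},\qquad \frac{\delta p_{i}^{a}}{\delta x^{j}}=-\underset{2}{N}{}_{(i)j}^{(a)},
\]
while the vertical leading term is simply $\partial p_{i}^{a}/\partial p_{j}^{b}=\delta_{b}^{a}\delta_{i}^{j}$. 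Inserting these three evaluations into the previous display produces exactly the claimed expressions (\ref{Expr-Deflection-Tensors}).

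Since the whole argument reduces to a substitution followed by elementary differentiation of the coordinate function $p_{i}^{a}$, there is no genuine analytic obstacle. The only point demanding care is the index bookkeeping: one must check that the Kronecker deltas produced by $\partial p_{i}^{a}/\partial p_{j}^{c}$ collapse the summed connection indices onto the free indices $(i)$, $(a)$ correctly, and that, after the dummy indices are relabelled, the surviving contraction variable coincides with the $p_{k}^{c}$ that multiplies the $A$-, $H$- and $C$-coefficients in the statement.
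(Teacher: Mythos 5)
Your proposal is correct and is precisely the direct calculation the paper invokes (the paper only states ``by a direct calculation''): substituting $\mathbb{C}_{(i)}^{(a)}=p_{i}^{a}$ into the vertical-component covariant-derivative rules, evaluating $\delta p_{i}^{a}/\delta t^{b}=-\underset{1}{N}{}_{(i)b}^{(a)}$ and $\delta p_{i}^{a}/\delta x^{j}=-\underset{2}{N}{}_{(i)j}^{(a)}$ via (\ref{form-of-delta-t-si-x}), and using $\partial p_{i}^{a}/\partial p_{j}^{b}=\delta_{b}^{a}\delta_{i}^{j}$ gives exactly (\ref{Expr-Deflection-Tensors}). Your index bookkeeping, including the relabelling of the contracted dummy pair to the $p_{k}^{c}$ multiplying the $A$-, $H$- and $C$-coefficients, checks out.
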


\begin{remark}
The polymomentum deflection d-tensor fields (\ref{Expr-Deflection-Tensors})
will be used in a future paper for the construction of a \textbf{generalized
polymomentum electromagnetic geometrical theory} (governed by some \textbf{%
generalized Maxwell equations}), which is derived starting from a given 
\textbf{polymomentum Hamiltonian function of non-autonomous electrodynamic
kind}.
\end{remark}

\section{The torsion of an $N$-linear connection}

\hspace{4mm} Let $D$ be an $N$-linear connection on $E^{\ast }.$ The torsion 
$\mathbb{T}$ of $D$ is given by 
\begin{equation}
\mathbb{T}\left( X,Y\right) =D_{X}Y-D_{Y}X-\left[ X,Y\right] ,\text{ }%
\forall \text{ }X,Y\ \in \chi \left( E^{\ast }\right) .
\label{Torsion-Global}
\end{equation}

It is obvious that the torsion $\mathbb{T}$ can be evaluated by the pairs of
d-vector fields $\left( X^{\mathcal{W}_{\beta }},Y^{\mathcal{W}_{\gamma
}}\right) ,$ $\left( X^{\mathcal{W}_{\beta }},Y^{\mathcal{W}}\right) ,$ $%
\left( X^{\mathcal{W}},Y^{\mathcal{W}}\right) ,$ where $\beta ,\gamma =1,2$
and $\beta \leq \gamma ,$ $\mathcal{W}_{1}=\mathcal{H}_{\mathcal{T}},$ $%
\mathcal{W}_{2}=\mathcal{H}_{M},$ and then we obtain the vector fields: 
\begin{equation*}
\mathbb{T}\left( X^{\mathcal{W}_{\beta }},Y^{\mathcal{W}_{\gamma }}\right)
,\quad\mathbb{T}\left( X^{\mathcal{W}_{\beta }},Y^{\mathcal{W}}\right) ,\quad%
\mathbb{T}\left( X^{\mathcal{W}},Y^{\mathcal{W}}\right) .
\end{equation*}

Because the $N$-linear connection $D$ preserves by parallelism the
distributions $\mathcal{H}_{\mathcal{T}},$ $\mathcal{H}_{M}$ and $\mathcal{W}
$, and the vertical distribution $\mathcal{W}$ is integrable, we find

\begin{proposition}
The following properties of the torsion $\mathbb{T}$ hold good:%
\begin{equation*}
\begin{array}{lll}
h_{M}\mathbb{T}\left( X^{\mathcal{H}_{\mathcal{T}}},Y^{\mathcal{H}_{\mathcal{%
T}}}\right) =0, & h_{M}\mathbb{T}\left( X^{\mathcal{H}_{\mathcal{T}}},Y^{%
\mathcal{W}}\right) =0, & h_{\mathcal{T}}\mathbb{T}\left( X^{\mathcal{H}%
_{M}},Y^{\mathcal{H}_{M}}\right) =0,\medskip \\ 
h_{\mathcal{T}}\mathbb{T}\left( X^{\mathcal{H}_{M}},Y^{\mathcal{W}}\right)
=0, & h_{\mathcal{T}}\mathbb{T}\left( X^{\mathcal{W}},Y^{\mathcal{W}}\right)
=0, & h_{M}\mathbb{T}\left( X^{\mathcal{W}},Y^{\mathcal{W}}\right) =0.%
\end{array}%
\end{equation*}
\end{proposition}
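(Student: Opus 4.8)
The plan is to substitute the definition (\ref{Torsion-Global}) of the torsion into each of the six expressions, apply the relevant horizontal projector, and reduce everything to two facts already established: that $D$, being an $N$-linear connection, preserves each of $\mathcal{H}_{\mathcal{T}}$, $\mathcal{H}_M$ and $\mathcal{W}$ by parallelism, as recorded in (\ref{D-preserves distrib}); and that the Poisson brackets of the adapted basis fields, listed in (\ref{Poisson brackets}), are purely vertical. For a typical pair I would write
\begin{equation*}
\mathbb{T}\left( X^{\mathcal{W}_{\beta }},Y^{\mathcal{W}_{\gamma }}\right) =D_{X^{\mathcal{W}_{\beta }}}Y^{\mathcal{W}_{\gamma }}-D_{Y^{\mathcal{W}_{\gamma }}}X^{\mathcal{W}_{\beta }}-\left[ X^{\mathcal{W}_{\beta }},Y^{\mathcal{W}_{\gamma }}\right] ,
\end{equation*}
and treat the two covariant-derivative terms and the single bracket term separately.

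For the covariant-derivative terms I would invoke (\ref{D-preserves distrib}): since $D_{X}$ sends $\mathcal{T}$-horizontal fields into $\mathcal{T}$-horizontal fields, $M$-horizontal into $M$-horizontal and vertical into vertical, each term $D_{X}Y^{\mathcal{W}_{\alpha }}$ is confined to the distribution $\mathcal{W}_{\alpha }$. Hence, in the first identity both $D$-terms are $\mathcal{T}$-horizontal and are killed by $h_{M}$; in the second identity they are respectively vertical and $\mathcal{T}$-horizontal, again killed by $h_{M}$; and so on through the remaining four cases. In every instance the projector used maps onto a distribution different from the one in which each $D$-term must sit, so all covariant-derivative contributions drop out.

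The bracket term is the delicate point. The table (\ref{Poisson brackets}) shows that the bracket of any two adapted basis fields is vertical, but one must resist concluding that the full bracket of general fields is vertical. Writing, for instance, $X^{\mathcal{H}_{\mathcal{T}}}=X^{a}\delta /\delta t^{a}$ and $Y^{\mathcal{H}_{\mathcal{T}}}=Y^{b}\delta /\delta t^{b}$, one finds
\begin{equation*}
\left[ X^{\mathcal{H}_{\mathcal{T}}},Y^{\mathcal{H}_{\mathcal{T}}}\right] =\left( X^{a}\frac{\delta Y^{b}}{\delta t^{a}}-Y^{a}\frac{\delta X^{b}}{\delta t^{a}}\right) \frac{\delta }{\delta t^{b}}+X^{a}Y^{b}\left[ \frac{\delta }{\delta t^{a}},\frac{\delta }{\delta t^{b}}\right] ,
\end{equation*}
so that the bracket lies in $\chi \left( \mathcal{H}_{\mathcal{T}}\right) \oplus \chi \left( \mathcal{W}\right) $ and has no $M$-horizontal part. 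The analogous expansions yield $\left[ X^{\mathcal{H}_{M}},Y^{\mathcal{H}_{M}}\right] \in \chi \left( \mathcal{H}_{M}\right) \oplus \chi \left( \mathcal{W}\right) $, as well as $\left[ X^{\mathcal{H}_{\mathcal{T}}},Y^{\mathcal{W}}\right] \in \chi \left( \mathcal{H}_{\mathcal{T}}\right) \oplus \chi \left( \mathcal{W}\right) $ and $\left[ X^{\mathcal{H}_{M}},Y^{\mathcal{W}}\right] \in \chi \left( \mathcal{H}_{M}\right) \oplus \chi \left( \mathcal{W}\right) $ by the vertical entries of (\ref{Poisson brackets}); finally $\left[ X^{\mathcal{W}},Y^{\mathcal{W}}\right] \in \chi \left( \mathcal{W}\right) $ by integrability of $\mathcal{W}$. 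In each of the six cases the bracket therefore has no component in the distribution targeted by the stated projector, so that contribution vanishes as well and all six equalities follow.

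I expect the sole genuine subtlety to be exactly this bracket bookkeeping: the coefficient-derivative terms in a general Lie bracket survive and must be accounted for, and the six identities are precisely those for which the surviving horizontal part sits in a sector annihilated by the chosen projector. This also explains why the statement omits the mixed projections of $\mathbb{T}\left( X^{\mathcal{H}_{\mathcal{T}}},Y^{\mathcal{H}_{M}}\right) $: the bracket $\left[ X^{\mathcal{H}_{\mathcal{T}}},Y^{\mathcal{H}_{M}}\right] $ acquires components in both horizontal distributions, so neither $h_{\mathcal{T}}$ nor $h_{M}$ would kill it.
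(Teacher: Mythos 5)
Your proposal is correct and follows essentially the same route as the paper, which justifies the proposition in one line by the fact that $D$ preserves the distributions $\mathcal{H}_{\mathcal{T}}$, $\mathcal{H}_{M}$, $\mathcal{W}$ by parallelism together with the integrability of $\mathcal{W}$, with the verticality of the adapted-basis brackets from (\ref{Poisson brackets}) left implicit. You have merely made explicit the bracket bookkeeping (that $\left[ X^{\mathcal{H}_{\mathcal{T}}},Y^{\mathcal{H}_{\mathcal{T}}}\right] \in \chi \left( \mathcal{H}_{\mathcal{T}}\right) \oplus \chi \left( \mathcal{W}\right) $, etc.), which the paper takes for granted, and your accounting is accurate in all six cases.
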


From the preceding statement, we deduce

\begin{proposition}
The torsion tensor field $\mathbb{T}$ of an $N$-linear connection $D$ is
uniquely determined by the following components:%
\begin{equation}
\left\{ 
\begin{array}{l}
\mathbb{T}\left( X^{\mathcal{H}_{\mathcal{T}}},Y^{\mathcal{H}_{\mathcal{T}%
}}\right) =h_{\mathcal{T}}\mathbb{T}\left( X^{\mathcal{H}_{\mathcal{T}}},Y^{%
\mathcal{H}_{\mathcal{T}}}\right) +w\mathbb{T}\left( X^{\mathcal{H}_{%
\mathcal{T}}},Y^{\mathcal{H}_{\mathcal{T}}}\right) ,\medskip \\ 
\mathbb{T}\left( X^{\mathcal{H}_{\mathcal{T}}},Y^{\mathcal{H}_{M}}\right)
=h_{\mathcal{T}}\mathbb{T}\left( X^{\mathcal{H}_{\mathcal{T}}},Y^{\mathcal{H}%
_{M}}\right) +h_{M}\mathbb{T}\left( X^{\mathcal{H}_{\mathcal{T}}},Y^{%
\mathcal{H}_{M}}\right) +\medskip \\ 
\text{ \ \ \ \ \ \ \ \ \ \ \ \ \ \ \ \ \ \ \ \ \ }+w\mathbb{T}\left( X^{%
\mathcal{H}_{\mathcal{T}}},Y^{\mathcal{H}_{M}}\right) ,\medskip \\ 
\mathbb{T}\left( X^{\mathcal{H}_{\mathcal{T}}},Y^{\mathcal{W}}\right) =h_{%
\mathcal{T}}\mathbb{T}\left( X^{\mathcal{H}_{\mathcal{T}}},Y^{\mathcal{W}%
}\right) +w\mathbb{T}\left( X^{\mathcal{H}_{\mathcal{T}}},Y^{\mathcal{W}%
}\right) ,%
\end{array}%
\right.  \label{Torsion-hT+rest}
\end{equation}%
\medskip%
\begin{equation}
\left\{ 
\begin{array}{l}
\mathbb{T}\left( X^{\mathcal{H}_{M}},Y^{\mathcal{H}_{M}}\right) =h_{M}%
\mathbb{T}\left( X^{\mathcal{H}_{M}},Y^{\mathcal{H}_{M}}\right) \ +w\mathbb{T%
}\left( X^{\mathcal{H}_{M}},Y^{\mathcal{H}_{M}}\right) ,\medskip \\ 
\mathbb{T}\left( X^{\mathcal{H}_{M}},Y^{\mathcal{W}}\right) =h_{M}\mathbb{T}%
\left( X^{\mathcal{H}_{M}},Y^{\mathcal{W}}\right) \ +w\mathbb{T}\left( X^{%
\mathcal{H}_{M}},Y^{\mathcal{W}}\right) ,%
\end{array}%
\right.  \label{Torsion-hM+rest}
\end{equation}%
\medskip%
\begin{equation}
\mathbb{T}\left( X^{\mathcal{W}},Y^{\mathcal{W}}\right) =w\mathbb{T}\left(
X^{\mathcal{W}},Y^{\mathcal{W}}\right) ,  \label{Torsion-WW}
\end{equation}%
where in the right part of each equality we have $d$-tensor fields on $%
E^{\ast }.$
\end{proposition}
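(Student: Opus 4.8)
The plan is to exploit the fact that $\mathbb{T}$, as defined in (\ref{Torsion-Global}), is $\mathcal{F}\left( E^{\ast }\right)$-bilinear and antisymmetric, hence a genuine tensor field; consequently it is completely determined once its values on all pairs drawn from the three distributions $\mathcal{H}_{\mathcal{T}}$, $\mathcal{H}_{M}$, $\mathcal{W}$ are known. First I would verify tensoriality directly from (\ref{Torsion-Global}): the non-$\mathcal{F}$-linear contributions coming from $D_{Y}\left( fX\right)$ and from the Lie bracket $\left[ fX,Y\right]$ cancel, leaving $\mathbb{T}\left( fX,Y\right) =f\mathbb{T}\left( X,Y\right)$, while the swap $\mathbb{T}\left( Y,X\right) =-\mathbb{T}\left( X,Y\right)$ is immediate. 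Then I would substitute the decomposition (\ref{decomp-vect-fields}), $X=X^{\mathcal{H}_{\mathcal{T}}}+X^{\mathcal{H}_{M}}+X^{\mathcal{W}}$ and likewise for $Y$, into $\mathbb{T}\left( X,Y\right)$ and expand by bilinearity. This produces nine terms $\mathbb{T}\left( X^{\mathcal{W}_{\alpha }},Y^{\mathcal{W}_{\beta }}\right)$; using antisymmetry (so that, for instance, $\mathbb{T}\left( X^{\mathcal{H}_{M}},Y^{\mathcal{H}_{\mathcal{T}}}\right)$ is absorbed into $\mathbb{T}\left( X^{\mathcal{H}_{\mathcal{T}}},Y^{\mathcal{H}_{M}}\right)$) these collapse to the six independent pairs listed in the statement.

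Next I would apply the projector identity $h_{\mathcal{T}}+h_{M}+w=I$ from (\ref{projections}) to each of the six torsion values, writing every $\mathbb{T}\left( X^{\mathcal{W}_{\beta }},Y^{\mathcal{W}_{\gamma }}\right)$ as the sum of its $\mathcal{T}$-horizontal, $M$-horizontal and vertical parts. At this point I would invoke the preceding Proposition, which supplies exactly the six vanishing identities $h_{M}\mathbb{T}\left( X^{\mathcal{H}_{\mathcal{T}}},Y^{\mathcal{H}_{\mathcal{T}}}\right) =0$, $h_{M}\mathbb{T}\left( X^{\mathcal{H}_{\mathcal{T}}},Y^{\mathcal{W}}\right) =0$, $h_{\mathcal{T}}\mathbb{T}\left( X^{\mathcal{H}_{M}},Y^{\mathcal{H}_{M}}\right) =0$, $h_{\mathcal{T}}\mathbb{T}\left( X^{\mathcal{H}_{M}},Y^{\mathcal{W}}\right) =0$, together with $h_{\mathcal{T}}\mathbb{T}\left( X^{\mathcal{W}},Y^{\mathcal{W}}\right) =h_{M}\mathbb{T}\left( X^{\mathcal{W}},Y^{\mathcal{W}}\right) =0$. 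Deleting precisely these vanishing projections term by term leaves exactly the reduced expressions (\ref{Torsion-hT+rest}), (\ref{Torsion-hM+rest}) and (\ref{Torsion-WW}); in particular the pair $\left( X^{\mathcal{H}_{\mathcal{T}}},Y^{\mathcal{H}_{M}}\right)$, for which the preceding Proposition asserts no vanishing, retains all three projections, as recorded in the second line of (\ref{Torsion-hT+rest}).

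Finally I would observe that each surviving projection is itself a d-tensor field, since the projectors $h_{\mathcal{T}},h_{M},w$ are $\mathcal{F}\left( E^{\ast }\right)$-linear and map into the respective distributions, which justifies the closing phrase of the statement. There is essentially no analytic obstacle here: the entire argument is bookkeeping that matches the six vanishing identities against the array of projected torsion components. The only point genuinely requiring care is the antisymmetry reduction from nine ordered terms to six independent pairs, where one must track signs consistently; once that is done, the reduction is purely mechanical.
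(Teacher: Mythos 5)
Your proof is correct and takes essentially the same route as the paper, which deduces the statement immediately from the preceding proposition's six vanishing identities by reducing, via antisymmetry, to the pairs $\left( X^{\mathcal{W}_{\beta }},Y^{\mathcal{W}_{\gamma }}\right) $ with $\beta \leq \gamma $ and then expanding each torsion value with the projector identity $h_{\mathcal{T}}+h_{M}+w=I$ from (\ref{projections}). Your explicit verification of the $\mathcal{F}\left( E^{\ast }\right) $-bilinearity and antisymmetry of $\mathbb{T}$ only makes explicit what the paper takes for granted.
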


\begin{definition}
The terms from (\ref{Torsion-hT+rest}), (\ref{Torsion-hM+rest}) and (\ref%
{Torsion-WW}) are called the \textbf{d-tensors of torsion} of the $N$-linear
connection $D$. More exactly, $h_{\mathcal{T}}\mathbb{T}\left( X^{\mathcal{H}%
_{\mathcal{T}}},Y^{\mathcal{H}_{\mathcal{T}}}\right) $ is called the $h_{%
\mathcal{T}}\left( h_{\mathcal{T}}h_{\mathcal{T}}\right) $\textbf{-tensor of
torsion} of $D,$ $h_{M}\mathbb{T}(X^{\mathcal{H}_{\mathcal{T}}},Y^{\mathcal{H%
}_{M}})$ is called the $h_{M}\left( h_{\mathcal{T}}h_{M}\right) $\textbf{%
-tensor of torsion} of $D$ and so on.
\end{definition}

Now, let us suppose that the $N$-linear connection $D$ is given in the
adapted basis (\ref{ad-basis-nlc}) by the coefficients $D\Gamma \left(
N\right) $ from (\ref{Local-descr-D-Nine-Coeff}). In such a context, we have

\begin{theorem}
The torsion $d$-tensors of the $N$-linear connection $D$ on $E^{\ast }$ have
the expressions:%
\begin{equation*}
h_{\mathcal{T}}\mathbb{T}\left( \dfrac{\delta }{\delta t^{b}},\dfrac{\delta 
}{\delta t^{a}}\right) =T_{ab}^{c}\dfrac{\delta }{\delta t^{c}},\quad h_{M}%
\mathbb{T}\left( \dfrac{\delta }{\delta t^{b}},\dfrac{\delta }{\delta t^{a}}%
\right) =T_{ab}^{k}\dfrac{\delta }{\delta x^{k}},
\end{equation*}%
\begin{equation*}
w\mathbb{T}\left( \dfrac{\delta }{\delta t^{b}},\dfrac{\delta }{\delta t^{a}}%
\right) =T_{\left( r\right) ab}^{\left( f\right) }\dfrac{\partial }{\partial
p_{r}^{f}},
\end{equation*}%
\begin{equation*}
h_{\mathcal{T}}\mathbb{T}\left( \dfrac{\delta }{\delta x^{j}},\dfrac{\delta 
}{\delta t^{a}}\right) =T_{aj}^{c}\dfrac{\delta }{\delta t^{c}},\quad h_{M}%
\mathbb{T}\left( \dfrac{\delta }{\delta x^{j}},\dfrac{\delta }{\delta t^{a}}%
\right) =T_{aj}^{k}\dfrac{\delta }{\delta x^{k}},
\end{equation*}%
\begin{equation*}
w\mathbb{T}\left( \dfrac{\delta }{\delta x^{j}},\dfrac{\delta }{\delta t^{a}}%
\right) =T_{\left( r\right) aj}^{\left( f\right) }\dfrac{\partial }{\partial
p_{r}^{f}},
\end{equation*}%
\begin{equation*}
h_{\mathcal{T}}\mathbb{T}\left( \dfrac{\partial }{\partial p_{j}^{b}},\dfrac{%
\delta }{\delta t^{a}}\right) =P_{a\left( b\right) }^{c\left( j\right) }%
\dfrac{\delta }{\delta t^{c}},\quad h_{M}\mathbb{T}\left( \dfrac{\partial }{%
\partial p_{j}^{b}},\dfrac{\delta }{\delta t^{a}}\right) =P_{a\left(
b\right) }^{k\left( j\right) }\dfrac{\delta }{\delta x^{k}},
\end{equation*}%
\begin{equation*}
w\mathbb{T}\left( \dfrac{\partial }{\partial p_{j}^{b}},\dfrac{\delta }{%
\delta t^{a}}\right) =P_{\left( r\right) a\left( b\right) }^{\left( f\right)
\ \left( j\right) }\frac{\partial }{\partial p_{r}^{f}},
\end{equation*}%
\begin{equation*}
h_{\mathcal{T}}\mathbb{T}\left( \dfrac{\delta }{\delta x^{j}},\dfrac{\delta 
}{\delta x^{i}}\right) =T_{ij}^{c}\dfrac{\delta }{\delta t^{c}},\quad h_{M}%
\mathbb{T}\left( \dfrac{\delta }{\delta x^{j}},\dfrac{\delta }{\delta x^{i}}%
\right) =T_{ij}^{k}\dfrac{\delta }{\delta x^{k}},
\end{equation*}%
\begin{equation*}
w\mathbb{T}\left( \dfrac{\delta }{\delta x^{j}},\dfrac{\delta }{\delta x^{i}}%
\right) =T_{\left( r\right) ij}^{\left( f\right) }\dfrac{\partial }{\partial
p_{r}^{f}},
\end{equation*}%
\begin{equation*}
h_{\mathcal{T}}\mathbb{T}\left( \dfrac{\partial }{\partial p_{j}^{b}},\dfrac{%
\delta }{\delta x^{i}}\right) =P_{i\left( b\right) }^{c\left( j\right) }%
\dfrac{\delta }{\delta t^{c}},\quad h_{M}\mathbb{T}\left( \dfrac{\partial }{%
\partial p_{j}^{b}},\dfrac{\delta }{\delta x^{i}}\right) =P_{i\left(
b\right) }^{k\left( j\right) }\dfrac{\delta }{\delta x^{k}},
\end{equation*}%
\begin{equation*}
w\mathbb{T}\left( \dfrac{\partial }{\partial p_{j}^{b}},\dfrac{\delta }{%
\delta x^{i}}\right) =P_{\left( r\right) i\left( b\right) }^{\left( f\right)
\ \left( j\right) }\dfrac{\partial }{\partial p_{r}^{f}},
\end{equation*}%
\begin{equation*}
h_{\mathcal{T}}\mathbb{T}\left( \dfrac{\partial }{\partial p_{j}^{b}},\dfrac{%
\partial }{\partial p_{i}^{a}}\right) =S_{\ \left( a\right) \left( b\right)
}^{c\left( i\right) \left( j\right) }\dfrac{\delta }{\delta t^{c}},\quad
h_{M}\mathbb{T}\left( \dfrac{\partial }{\partial p_{j}^{b}},\dfrac{\partial 
}{\partial p_{i}^{a}}\right) =S_{\ \left( a\right) \left( b\right)
}^{k\left( i\right) \left( j\right) }\dfrac{\delta }{\delta x^{k}},
\end{equation*}%
\begin{equation*}
w\mathbb{T}\left( \dfrac{\partial }{\partial p_{j}^{b}},\dfrac{\partial }{%
\partial p_{i}^{a}}\right) =S_{\left( r\right) \left( a\right) \left(
b\right) }^{\left( f\right) \left( i\right) \left( j\right) }\dfrac{\partial 
}{\partial p_{r}^{f}},
\end{equation*}%
where 
\begin{equation}
\left\{ 
\begin{array}{l}
T_{ab}^{c}=A_{ab}^{c}-A_{ba}^{c},\quad T_{ab}^{k}=0,\quad T_{\left( r\right)
ab}^{\left( f\right) }=R_{\left( r\right) ab}^{\left( f\right) },\medskip \\ 
T_{aj}^{c}=H_{aj}^{c},\quad T_{aj}^{k}=-A_{ja}^{k},\quad T_{\left( r\right)
aj}^{\left( f\right) }=R_{\left( r\right) aj}^{\left( f\right) },\medskip \\ 
P_{a\left( b\right) }^{c\left( j\right) }=C_{a\left( b\right) }^{c\left(
j\right) },\quad P_{a\left( b\right) }^{k\left( j\right) }=0,\quad P_{\left(
r\right) a\left( b\right) }^{\left( f\right) \ \left( j\right) }=B_{\left(
r\right) a\left( b\right) }^{\left( f\right) \ \left( j\right) }+A_{\left(
r\right) \left( b\right) a}^{\left( f\right) \left( j\right) },%
\end{array}%
\right.  \label{Tors-local-restu+hT}
\end{equation}%
\medskip%
\begin{equation}
\left\{ 
\begin{array}{l}
T_{ij}^{c}=0,\quad T_{ij}^{k}=H_{ij}^{k}-H_{ji}^{k},\quad T_{\left( r\right)
ij}^{\left( f\right) }=R_{\left( r\right) ij}^{\left( f\right) },\medskip \\ 
T_{i\left( b\right) }^{c\left( j\right) }=0,\quad P_{i\left( b\right)
}^{k\left( j\right) }=C_{i\left( b\right) }^{k\left( j\right) },\quad
P_{\left( r\right) i\left( b\right) }^{\left( f\right) \ \left( j\right)
}=B_{\left( r\right) i\left( b\right) }^{\left( f\right) \ \left( j\right)
}+H_{\left( r\right) \left( b\right) i}^{\left( f\right) \left( j\right) },%
\end{array}%
\right.  \label{Tors-local-restu+hM}
\end{equation}%
\medskip%
\begin{equation}
S_{\ \left( a\right) \left( b\right) }^{c\left( i\right) \left( j\right)
}=0,\quad S_{\ \left( a\right) \left( b\right) }^{k\left( i\right) \left(
j\right) }=0,\quad S_{\left( r\right) \left( a\right) \left( b\right)
}^{\left( f\right) \left( i\right) \left( j\right) }=-\left( C_{\left(
r\right) \left( a\right) \left( b\right) }^{\left( f\right) \left( i\right)
\left( j\right) }-C_{\left( r\right) \left( b\right) \left( a\right)
}^{\left( f\right) \left( j\right) \left( i\right) }\right)
\label{Tors-local-WW}
\end{equation}%
and the d-tensors $R_{\left( r\right) ab}^{\left( f\right) },$ $R_{\left(
r\right) aj}^{\left( f\right) },$ $R_{\left( r\right) ij}^{\left( f\right)
}, $ $B_{\left( r\right) a\left( b\right) }^{\left( f\right) \ \left(
j\right) } $ and $B_{\left( r\right) i\left( b\right) }^{\left( f\right) \
\left( j\right) }$ are given by (\ref{Formulas-Poisson-brackets}).
\end{theorem}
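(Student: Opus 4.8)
The plan is to compute the torsion directly from its definition (\ref{Torsion-Global}) on each pair of fields drawn from the adapted basis (\ref{ad-basis-nlc}), and then to extract the $h_{\mathcal{T}}$-, $h_{M}$- and $w$-components simply by reading off the coefficients of $\delta/\delta t^{c}$, $\delta/\delta x^{k}$ and $\partial/\partial p_{r}^{f}$. Since $\mathbb{T}$ is $\mathcal{F}(E^{\ast})$-bilinear and skew-symmetric, it suffices to treat the six pair-types built from the three kinds of basis fields: $(\delta/\delta t^{b},\delta/\delta t^{a})$, $(\delta/\delta x^{j},\delta/\delta t^{a})$, $(\partial/\partial p_{j}^{b},\delta/\delta t^{a})$, $(\delta/\delta x^{j},\delta/\delta x^{i})$, $(\partial/\partial p_{j}^{b},\delta/\delta x^{i})$ and $(\partial/\partial p_{j}^{b},\partial/\partial p_{i}^{a})$. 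For each pair $(V,W)$ I would substitute into $\mathbb{T}(V,W)=D_{V}W-D_{W}V-[V,W]$ the two covariant derivatives read off from the nine-coefficient description of $D\Gamma(N)$, together with the Poisson bracket $[V,W]$ supplied by (\ref{Poisson brackets}) and (\ref{Formulas-Poisson-brackets}).

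As a representative instance, take $(V,W)=(\partial/\partial p_{j}^{b},\delta/\delta t^{a})$. The nine-coefficient table gives $D_{\partial/\partial p_{j}^{b}}(\delta/\delta t^{a})=C_{a(b)}^{c(j)}\,\delta/\delta t^{c}$ and $D_{\delta/\delta t^{a}}(\partial/\partial p_{j}^{b})=-A_{(r)(b)a}^{(f)(j)}\,\partial/\partial p_{r}^{f}$, while (\ref{Poisson brackets}) yields $[\partial/\partial p_{j}^{b},\delta/\delta t^{a}]=-B_{(r)a(b)}^{(f)(j)}\,\partial/\partial p_{r}^{f}$. Assembling these three terms leaves the horizontal part $C_{a(b)}^{c(j)}\,\delta/\delta t^{c}$ and the vertical part $\bigl(B_{(r)a(b)}^{(f)(j)}+A_{(r)(b)a}^{(f)(j)}\bigr)\,\partial/\partial p_{r}^{f}$, which is exactly the third line of (\ref{Tors-local-restu+hT}), together with the vanishing $M$-horizontal component $P_{a(b)}^{k(j)}=0$. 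The remaining five pairs are expanded in the same purely mechanical manner; in particular the temporal-temporal pair produces $T_{ab}^{c}=A_{ab}^{c}-A_{ba}^{c}$ in the horizontal part and $-R_{(r)ba}^{(f)}=R_{(r)ab}^{(f)}$ in the vertical part, while the purely vertical pair uses $[\partial/\partial p_{j}^{b},\partial/\partial p_{i}^{a}]=0$ and the two oppositely-signed $C$-derivatives to give the antisymmetrised expression (\ref{Tors-local-WW}), with all its $h_{\mathcal{T}}$- and $h_{M}$-components vanishing because both $D$ and the vertical bracket keep everything inside $\mathcal{W}$.

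The difficulty here is entirely bookkeeping rather than conceptual. I would be careful with the explicit minus signs carried by the vertical rows of the covariant-derivative table (the derivatives of $\partial/\partial p_{j}^{b}$), with the antisymmetry $R_{(r)ab}^{(f)}=-R_{(r)ba}^{(f)}$ coming from (\ref{Formulas-Poisson-brackets}) that converts each bracket sign into the stated $T_{(r)ab}^{(f)}=R_{(r)ab}^{(f)}$, and with consistent relabelling of the summed and free indices so that every term lands on the intended basis vector. Once each pair is expanded and its three projections are collected, the coefficients match (\ref{Tors-local-restu+hT}), (\ref{Tors-local-restu+hM}) and (\ref{Tors-local-WW}) term by term, and the identification of the $B$- and $R$-symbols with (\ref{Formulas-Poisson-brackets}) finishes the argument.
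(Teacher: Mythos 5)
Your proposal is correct and follows essentially the same route as the paper's own proof: both expand $\mathbb{T}(X,Y)=D_{X}Y-D_{Y}X-[X,Y]$ on the adapted basis, substitute the nine coefficients of $D\Gamma(N)$ together with the bracket formulas (\ref{Poisson brackets}) and (\ref{Formulas-Poisson-brackets}), and read off the $h_{\mathcal{T}}$-, $h_{M}$- and $w$-projections, invoking the antisymmetry of $R_{\left( r\right) ab}^{\left( f\right)}$ exactly where needed. Your representative case $\left(\partial /\partial p_{j}^{b},\,\delta /\delta t^{a}\right)$ is in fact one of the three instances the paper itself works out before treating the remaining pairs as analogous.
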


\begin{proof}
Taking into account the Poisson brackets formulas (\ref{Poisson brackets})
and (\ref{Formulas-Poisson-brackets}), together with the description in the
adapted basis (\ref{ad-basis-nlc}) of the $N$-linear connection $D\Gamma
\left( N\right) $ given by the local coefficients (\ref%
{Local-descr-D-Nine-Coeff}), we successively obtain%
\begin{eqnarray*}
h_{\mathcal{T}}\mathbb{T}\left( \frac{\delta }{\delta t^{b}},\frac{\delta }{%
\delta t^{a}}\right)  &=&h_{\mathcal{T}}D_{\dfrac{\delta }{\delta t^{b}}}%
\frac{\delta }{\delta t^{a}}-h_{\mathcal{T}}D_{\dfrac{\delta }{\delta t^{a}}}%
\frac{\delta }{\delta t^{b}}-h_{\mathcal{T}}\left[ \frac{\delta }{\delta
t^{b}},\frac{\delta }{\delta t^{a}}\right] = \\
&=&\left( A_{ab}^{c}-A_{ba}^{c}\right) \frac{\delta }{\delta t^{c}}.
\end{eqnarray*}%
Consequently, the first equality from (\ref{Tors-local-restu+hT}) is true.
In the sequel, we have%
\begin{eqnarray*}
h_{M}\mathbb{T}\left( \frac{\delta }{\delta x^{j}},\frac{\delta }{\delta
t^{a}}\right)  &=&h_{M}D_{\dfrac{\delta }{\delta x^{j}}}\frac{\delta }{%
\delta t^{a}}-h_{M}D_{\dfrac{\delta }{\delta t^{a}}}\frac{\delta }{\delta
x^{j}}-h_{M}\left[ \frac{\delta }{\delta x^{j}},\frac{\delta }{\delta t^{a}}%
\right] = \\
&=&-A_{ja}^{k}\frac{\delta }{\delta x^{k}}
\end{eqnarray*}%
and the fifth equality from (\ref{Tors-local-restu+hT}) is correct. Then,
for example%
\begin{eqnarray*}
w\mathbb{T}\left( \frac{\partial }{\partial p_{j}^{b}},\frac{\delta }{\delta
t^{a}}\right)  &=&wD_{\dfrac{\partial }{\partial p_{j}^{b}}}\frac{\delta }{%
\delta t^{a}}-wD_{\dfrac{\delta }{\delta t^{a}}}\frac{\partial }{\partial
p_{j}^{b}}-w\left[ \frac{\partial }{\partial p_{j}^{b}},\frac{\delta }{%
\delta t^{a}}\right] = \\
&=&\left( A_{\left( r\right) \left( b\right) a}^{\left( f\right) \left(
j\right) }+B_{\left( r\right) a\left( b\right) }^{\left( f\right) \ \left(
j\right) }\right) \frac{\partial }{\partial p_{r}^{f}}
\end{eqnarray*}%
and the ninth equality from (\ref{Tors-local-restu+hT}) is true. In the same
manner, we obtain the other equalities.
\end{proof}

\begin{corollary}
The torsion\ $\mathbb{T}$\ of an arbitrary $N$-linear connection $D$ on $%
E^{\ast }$ is determined by \textbf{12} effective local $d$-tensors,
arranged in the following table:%
\begin{equation*}
\begin{tabular}{|c|c|c|c|}
\hline
& $h_{\mathcal{T}}$ & $h_{M}$ & $w$ \\ \hline
$h_{\mathcal{T}}h_{\mathcal{T}}$ & $T_{ab}^{c}$ & $0$ & $R_{\left( r\right)
ab}^{\left( f\right) }$ \\ \hline
$h_{M}h_{\mathcal{T}}$ & $T_{aj}^{c}$ & $T_{aj}^{k}$ & $R_{\left( r\right)
aj}^{\left( f\right) }$ \\ \hline
$wh_{\mathcal{T}}$ & $P_{a\left( b\right) }^{c\left( j\right) }$ & $0$ & $%
P_{\left( r\right) a\left( b\right) }^{\left( f\right) \ \left( j\right) }$
\\ \hline
$h_{M}h_{M}$ & $0$ & $T_{ij}^{k}$ & $R_{\left( r\right) ij}^{\left( f\right)
}$ \\ \hline
$wh_{M}$ & $0$ & $P_{i\left( b\right) }^{k\left( j\right) }$ & $P_{\left(
r\right) i\left( b\right) }^{\left( f\right) \ \left( j\right) }$ \\ \hline
$ww$ & $0$ & $0$ & $S_{\left( r\right) \left( a\right) \left( b\right)
}^{\left( f\right) \left( i\right) \left( j\right) }$ \\ \hline
\end{tabular}%
\end{equation*}
\end{corollary}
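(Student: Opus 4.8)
The plan is to read the count off from the preceding theorem, but to organize it around a single structural observation that explains why six of the eighteen a priori components vanish. Since $\mathbb{T}$ is skew-symmetric and $D$ is an $N$-linear connection, the torsion is completely determined by its values on the ordered pairs of adapted basis fields of types $(\mathcal{W}_{\beta},\mathcal{W}_{\gamma})$ with $\beta\le\gamma$, where $\mathcal{W}_{1}=\mathcal{H}_{\mathcal{T}}$, $\mathcal{W}_{2}=\mathcal{H}_{M}$ and $\mathcal{W}_{0}=\mathcal{W}$. There are six such pairs, and each value decomposes under the three projections $h_{\mathcal{T}},h_{M},w$, so there are at most $6\times 3=18$ adapted $d$-tensors to consider.

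First I would record the key structural fact. Let $e_{\beta},e_{\gamma}$ be adapted basis fields of types $\mathcal{W}_{\beta},\mathcal{W}_{\gamma}$. By the defining property (\ref{D-preserves distrib}) of an $N$-linear connection, $D_{e_{\beta}}e_{\gamma}\in\chi(\mathcal{W}_{\gamma})$ and $D_{e_{\gamma}}e_{\beta}\in\chi(\mathcal{W}_{\beta})$, while the Poisson bracket formulas (\ref{Poisson brackets}) show that $[e_{\beta},e_{\gamma}]\in\chi(\mathcal{W})$ in every one of the six cases. Hence from the definition (\ref{Torsion-Global}) one gets $\mathbb{T}(e_{\beta},e_{\gamma})\in\chi(\mathcal{W}_{\beta})+\chi(\mathcal{W}_{\gamma})+\chi(\mathcal{W})$. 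Consequently the $h_{\mathcal{T}}$-component of $\mathbb{T}(e_{\beta},e_{\gamma})$ can survive only when $\mathcal{H}_{\mathcal{T}}\in\{\mathcal{W}_{\beta},\mathcal{W}_{\gamma}\}$, the $h_{M}$-component only when $\mathcal{H}_{M}\in\{\mathcal{W}_{\beta},\mathcal{W}_{\gamma}\}$, whereas the $w$-component is always permitted by the bracket term.

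Next I would apply this rule to the six pairs and tabulate. For $(\mathcal{H}_{\mathcal{T}},\mathcal{H}_{\mathcal{T}})$ the $h_{M}$ slot is forced to zero; for $(\mathcal{W},\mathcal{H}_{\mathcal{T}})$ the $h_{M}$ slot vanishes; for $(\mathcal{H}_{M},\mathcal{H}_{M})$ and $(\mathcal{W},\mathcal{H}_{M})$ the $h_{\mathcal{T}}$ slot vanishes; and for $(\mathcal{W},\mathcal{W})$ both horizontal slots vanish. This accounts for exactly the six components recorded as $0$ in (\ref{Tors-local-restu+hT})--(\ref{Tors-local-WW}), leaving the twelve that appear in the table. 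I would then match each surviving component with its explicit expression from (\ref{Tors-local-restu+hT}), (\ref{Tors-local-restu+hM}) and (\ref{Tors-local-WW}), reading the horizontal entries $T_{ab}^{c},T_{aj}^{c},T_{aj}^{k},P_{a(b)}^{c(j)},T_{ij}^{k},P_{i(b)}^{k(j)}$ and the vertical entries $R_{(r)ab}^{(f)},R_{(r)aj}^{(f)},P_{(r)a(b)}^{(f)(j)},R_{(r)ij}^{(f)},P_{(r)i(b)}^{(f)(j)},S_{(r)(a)(b)}^{(f)(i)(j)}$ into the corresponding cells, thereby confirming the stated arrangement.

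There is no genuine obstacle here: the corollary is a bookkeeping consequence of the theorem, and the structural observation merely makes the pattern of vanishing transparent. The only points requiring care are using skew-symmetry of $\mathbb{T}$ to restrict to $\beta\le\gamma$ so that no component is counted twice, and keeping the upper/lower index placement of each $d$-tensor consistent with the notation fixed in the theorem.
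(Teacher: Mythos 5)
Your proposal is correct and takes essentially the same route as the paper: the six vanishing slots you derive from your structural rule are exactly the ones the paper establishes in its proposition on the properties of $\mathbb{T}$ (using that $D$ preserves the three distributions and that all adapted-basis brackets (\ref{Poisson brackets}) are vertical), and the twelve surviving entries are then read off from the explicit formulas (\ref{Tors-local-restu+hT}), (\ref{Tors-local-restu+hM}) and (\ref{Tors-local-WW}) of the preceding theorem. Your unified statement of the vanishing pattern is a tidy repackaging of that proposition rather than a genuinely different argument.
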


\begin{example}
For the particular case of the canonical $\overset{B}{N}$-linear Berwald
connection $B\Gamma \overset{B}{\left( N\right) }$, where $\overset{B}{N}=%
\overset{0}{N}$ is given by (\ref{can-nlc-asoc-to-metric}), all $d$-tensors
of torsion vanish except%
\begin{equation*}
\begin{array}{ll}
R_{\left( r\right) ab}^{\left( f\right) }=\varkappa _{gab}^{f}p_{r}^{g}, & 
R_{\left( r\right) ij}^{\left( f\right) }=-\mathbf{r}_{rij}^{s}p_{s}^{f},%
\end{array}%
\end{equation*}%
where $\varkappa _{gab}^{f}(t)$ (resp. $\mathbf{r}_{rij}^{s}(x)$) are the
local curvature tensors of the semi-Rie\-ma\-nni\-an metric $h_{ab}(t)$
(resp. $\varphi _{ij}(x)$).
\end{example}

\section{The curvature of an $N$-linear connection}

\hspace{4mm} Let $D$ be an $N$-linear connection on $E^{\ast }.$ The
curvature $\mathbb{R}$ of $D$ is given by 
\begin{equation}
\mathbb{R}\left( X,Y\right) Z=D_{X}D_{Y}Z-D_{Y}D_{X}Z-D_{\left[ X,Y\right]
}Z,\text{ }\forall \text{ }X,Y,Z\in \chi \left( E^{\ast }\right) .
\label{Curv-Global}
\end{equation}

We will express $\mathbb{R}$\ by his adapted components, taking into account
the decomposition (\ref{decomp-vect-fields}) of the vector fields on $%
E^{\ast }.$ In this direction, we firstly prove

\begin{theorem}
The curvature tensor field $\mathbb{R}$\ of the $N$-linear connection $D$ on 
$E^{\ast }$ has the properties:%
\begin{gather}
\begin{array}{ll}
h_{M}\mathbb{R}\left( X,Y\right) Z^{\mathcal{H}_{\mathcal{T}}}=0, & w\mathbb{%
R}\left( X,Y\right) Z^{\mathcal{H}_{\mathcal{T}}}=0,\medskip \\ 
h_{\mathcal{T}}\mathbb{R}\left( X,Y\right) Z^{\mathcal{H}_{M}}=0, & w\mathbb{%
R}\left( X,Y\right) Z^{\mathcal{H}_{M}}=0,\medskip \\ 
h_{\mathcal{T}}\mathbb{R}\left( X,Y\right) Z^{\mathcal{W}}=0, & h_{M}\mathbb{%
R}\left( X,Z\right) Z^{\mathcal{W}}=0,%
\end{array}%
\medskip  \label{Curv-Global-Propr} \\
\mathbb{R}\left( X,Y\right) Z=h_{\mathcal{T}}\mathbb{R}\left( X,Y\right) Z^{%
\mathcal{H}_{\mathcal{T}}}+h_{M}\mathbb{R}\left( X,Y\right) Z^{\mathcal{H}%
_{M}}+w\mathbb{R}\left( X,Y\right) Z^{\mathcal{W}}.  \notag
\end{gather}
\end{theorem}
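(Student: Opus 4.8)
The plan is to reduce every assertion to the single defining feature of an $N$-linear connection recorded in (\ref{D-preserves distrib}) and (\ref{D-h-uri=0}): for every $X\in \chi\left( E^{\ast }\right) $ the operator $D_{X}$ preserves each of the modules $\chi \left( \mathcal{H}_{\mathcal{T}}\right) $, $\chi \left( \mathcal{H}_{M}\right) $ and $\chi \left( \mathcal{W}\right) $, i.e. $D_{X}$ commutes with each of the projectors $h_{\mathcal{T}},h_{M},w$. Combined with the definition (\ref{Curv-Global}) of $\mathbb{R}$, this closure property alone forces the curvature operator $\mathbb{R}\left( X,Y\right) $ to preserve every adapted distribution, which is precisely what the theorem asserts. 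No computation in the adapted basis will be needed.

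First I would establish the six vanishing relations in the array. Fix one of the three distributions and denote by $Z^{\mathcal{W}_{\alpha }}\in \chi \left( \mathcal{W}_{\alpha }\right) $ the corresponding adapted component of $Z$, where as before $\mathcal{W}_{1}=\mathcal{H}_{\mathcal{T}},$ $\mathcal{W}_{2}=\mathcal{H}_{M},$ $\mathcal{W}_{0}=\mathcal{W}$. Because $D$ preserves parallelism, $D_{Y}Z^{\mathcal{W}_{\alpha }}\in \chi \left( \mathcal{W}_{\alpha }\right) $; applying the same principle a second time gives $D_{X}D_{Y}Z^{\mathcal{W}_{\alpha }}\in \chi \left( \mathcal{W}_{\alpha }\right) $, and symmetrically $D_{Y}D_{X}Z^{\mathcal{W}_{\alpha }}\in \chi \left( \mathcal{W}_{\alpha }\right) $ and $D_{\left[ X,Y\right] }Z^{\mathcal{W}_{\alpha }}\in \chi \left( \mathcal{W}_{\alpha }\right) $. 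Since $\chi \left( \mathcal{W}_{\alpha }\right) $ is an $\mathcal{F}\left( E^{\ast }\right) $-submodule, the combination defining $\mathbb{R}\left( X,Y\right) Z^{\mathcal{W}_{\alpha }}$ in (\ref{Curv-Global}) again lies in $\chi \left( \mathcal{W}_{\alpha }\right) $. Applying the two complementary projectors then annihilates it; taking $\mathcal{W}_{\alpha }=\mathcal{H}_{\mathcal{T}}$ yields $h_{M}\mathbb{R}\left( X,Y\right) Z^{\mathcal{H}_{\mathcal{T}}}=0$ and $w\mathbb{R}\left( X,Y\right) Z^{\mathcal{H}_{\mathcal{T}}}=0$, and the remaining four relations follow identically with $\mathcal{W}_{\alpha }=\mathcal{H}_{M}$ and $\mathcal{W}_{\alpha }=\mathcal{W}$.

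The decomposition formula is then immediate. Starting from the splitting $Z=Z^{\mathcal{H}_{\mathcal{T}}}+Z^{\mathcal{H}_{M}}+Z^{\mathcal{W}}$ of (\ref{decomp-vect-fields}) and using that $\mathbb{R}\left( X,Y\right) $ is additive in its last argument (a consequence of property 3 of a linear connection), I would write $\mathbb{R}\left( X,Y\right) Z$ as the sum of the three curvature terms evaluated on the adapted components. By the first part each term already lies entirely in its own distribution, so replacing it by its surviving projection via the identity $I=h_{\mathcal{T}}+h_{M}+w$ from (\ref{projections}) changes nothing, and this is exactly the displayed identity.

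I do not expect a genuine obstacle, since the statement is purely structural. The one point that must be handled with care is the \emph{iterated} use of the preservation property in the double-derivative terms $D_{X}D_{Y}Z$: one must invoke the closure of $D$ twice, first to keep $D_{Y}Z^{\mathcal{W}_{\alpha }}$ inside $\chi \left( \mathcal{W}_{\alpha }\right) $ and then again to keep $D_{X}\bigl( D_{Y}Z^{\mathcal{W}_{\alpha }}\bigr) $ there, and to observe that it is precisely the submodule structure of $\chi \left( \mathcal{W}_{\alpha }\right) $ that allows the three terms of (\ref{Curv-Global}) to recombine without leaving the distribution.
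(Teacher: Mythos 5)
Your proposal is correct and follows essentially the same route as the paper's own proof: the paper likewise argues that, since $D$ preserves by parallelism the distributions $\mathcal{H}_{\mathcal{T}}$, $\mathcal{H}_{M}$ and $\mathcal{W}$, the operator $\mathbb{R}\left( X,Y\right)$ defined by (\ref{Curv-Global}) carries each adapted distribution into itself, whence the six vanishing relations, the final decomposition being an easy consequence. Your write-up merely makes explicit two points the paper leaves tacit --- the iterated application of the preservation property in the terms $D_{X}D_{Y}Z^{\mathcal{W}_{\alpha }}$ and the $\mathcal{F}\left( E^{\ast }\right)$-submodule closure of $\chi \left( \mathcal{W}_{\alpha }\right)$ --- which is a welcome clarification rather than a departure.
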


\begin{proof}
Because $D$ preserves by parallelism the $\mathcal{H}_{\mathcal{T}}$%
-horizontal, $\mathcal{H}_{M}$-horizontal and vertical distributions, via
the formula (\ref{Curv-Global}), the operator $\mathbb{R}\left( X,Y\right) $
carries $h_{\mathcal{T}}$-horizontal (resp. $h_{M}$-horizontal) vector
fields into $h_{\mathcal{T}}$-horizontal (resp. $h_{M}$-horizontal) vector
fields and the vertical vector fields into vertical vector fields. Thus, the
first six equations from (\ref{Curv-Global-Propr}) hold good. The next one
is an easy consequence of the first six.
\end{proof}

By straightforward calculus, we obtain

\begin{theorem}
The curvature tensor $\mathbb{R}$ of the $N$-linear connection $D$ is
completely determined by \textbf{18} local \textbf{d-tensors of curvature:}%
\begin{equation*}
\mathbb{R}\left( \frac{\delta }{\delta t^{c}},\frac{\delta }{\delta t^{b}}%
\right) \frac{\delta }{\delta t^{a}}=R_{abc}^{d}\frac{\delta }{\delta t^{d}}%
,\quad \mathbb{R}\left( \frac{\delta }{\delta t^{c}},\frac{\delta }{\delta
t^{b}}\right) \frac{\delta }{\delta x^{i}}=R_{ibc}^{l}\frac{\delta }{\delta
x^{l}},
\end{equation*}%
\begin{equation*}
\mathbb{R}\left( \frac{\delta }{\delta t^{c}},\frac{\delta }{\delta t^{b}}%
\right) \frac{\partial }{\partial p_{i}^{a}}=-R_{\left( l\right) \left(
a\right) bc}^{\left( d\right) \left( i\right) }\frac{\partial }{\partial
p_{l}^{d}},
\end{equation*}%
\begin{equation*}
\mathbb{R}\left( \frac{\delta }{\delta x^{k}},\frac{\delta }{\delta t^{b}}%
\right) \frac{\delta }{\delta t^{a}}=R_{abk}^{d}\frac{\delta }{\delta t^{d}}%
,\quad \mathbb{R}\left( \frac{\delta }{\delta x^{k}},\frac{\delta }{\delta
t^{b}}\right) \frac{\delta }{\delta x^{i}}=R_{ibk}^{l}\frac{\delta }{\delta
x^{l}},
\end{equation*}%
\begin{equation*}
\mathbb{R}\left( \frac{\delta }{\delta x^{k}},\frac{\delta }{\delta t^{b}}%
\right) \frac{\partial }{\partial p_{i}^{a}}=-R_{\left( l\right) \left(
a\right) bk}^{\left( d\right) \left( i\right) }\frac{\partial }{\partial
p_{l}^{d}},
\end{equation*}%
\begin{equation*}
\mathbb{R}\left( \frac{\partial }{\partial p_{k}^{c}},\frac{\delta }{\delta
t^{b}}\right) \frac{\delta }{\delta t^{a}}=P_{ab\left( c\right) }^{d\ \left(
k\right) }\frac{\delta }{\delta t^{d}},\quad \mathbb{R}\left( \frac{\partial 
}{\partial p_{k}^{c}},\frac{\delta }{\delta t^{b}}\right) \frac{\delta }{%
\delta x^{i}}=P_{ib\left( c\right) }^{l\ \left( k\right) }\frac{\delta }{%
\delta x^{l}},
\end{equation*}%
\begin{equation*}
\mathbb{R}\left( \frac{\partial }{\partial p_{k}^{c}},\frac{\delta }{\delta
t^{b}}\right) \frac{\partial }{\partial p_{i}^{a}}=-P_{\left( l\right)
\left( a\right) b\left( c\right) }^{\left( d\right) \left( i\right) \ \left(
k\right) }\frac{\partial }{\partial p_{l}^{d}},
\end{equation*}%
\begin{equation*}
\mathbb{R}\left( \frac{\delta }{\delta x^{k}},\frac{\delta }{\delta x^{j}}%
\right) \frac{\delta }{\delta t^{a}}=R_{ajk}^{d\ \ }\frac{\delta }{\delta
t^{d}},\quad \mathbb{R}\left( \frac{\delta }{\delta x^{k}},\frac{\delta }{%
\delta x^{j}}\right) \frac{\delta }{\delta x^{i}}=R_{ijk}^{l}\frac{\delta }{%
\delta x^{l}},
\end{equation*}%
\begin{equation*}
\mathbb{R}\left( \frac{\delta }{\delta x^{k}},\frac{\delta }{\delta x^{j}}%
\right) \frac{\partial }{\partial p_{i}^{a}}=-R_{\left( l\right) \left(
a\right) jk}^{\left( d\right) \left( i\right) }\frac{\partial }{\partial
p_{l}^{d}},
\end{equation*}%
\begin{equation*}
\mathbb{R}\left( \frac{\partial }{\partial p_{k}^{c}},\frac{\delta }{\delta
x^{j}}\right) \frac{\delta }{\delta t^{a}}=P_{aj\left( c\right) }^{d\ \left(
k\right) }\frac{\delta }{\delta t^{d}},\quad \mathbb{R}\left( \frac{\partial 
}{\partial p_{k}^{c}},\frac{\delta }{\delta x^{j}}\right) \frac{\delta }{%
\delta x^{i}}=P_{ij\left( c\right) }^{l\ \left( k\right) }\frac{\delta }{%
\delta x^{l}},
\end{equation*}%
\begin{equation*}
\mathbb{R}\left( \frac{\partial }{\partial p_{k}^{c}},\frac{\delta }{\delta
x^{j}}\right) \frac{\partial }{\partial p_{i}^{a}}=-P_{\left( l\right)
\left( a\right) j\left( c\right) }^{\left( d\right) \left( i\right) \ \left(
k\right) }\frac{\partial }{\partial p_{l}^{d}},
\end{equation*}%
\begin{equation*}
\mathbb{R}\left( \frac{\partial }{\partial p_{k}^{c}},\frac{\partial }{%
\partial p_{j}^{b}}\right) \frac{\delta }{\delta t^{a}}=S_{a\left( b\right)
\left( c\right) }^{d\left( j\right) \left( k\right) }\frac{\delta }{\delta
t^{d}},\quad \mathbb{R}\left( \frac{\partial }{\partial p_{k}^{c}},\frac{%
\partial }{\partial p_{j}^{b}}\right) \frac{\delta }{\delta x^{i}}%
=S_{i\left( b\right) \left( c\right) }^{l\left( j\right) \left( k\right) }%
\frac{\delta }{\delta x^{l}},
\end{equation*}%
\begin{equation*}
\mathbb{R}\left( \frac{\partial }{\partial p_{k}^{c}},\frac{\partial }{%
\partial p_{j}^{b}}\right) \frac{\partial }{\partial p_{i}^{a}}=-S_{\left(
l\right) \left( a\right) \left( b\right) \left( c\right) }^{\left( d\right)
\left( i\right) \left( j\right) \left( k\right) }\frac{\partial }{\partial
p_{l}^{d}},
\end{equation*}%
which we can arrange in the following table:%
\begin{equation}
\begin{tabular}{|c|c|c|c|}
\hline
& $h_{\mathcal{T}}$ & $h_{M}$ & $w$ \\ \hline
$h_{\mathcal{T}}h_{\mathcal{T}}$ & $R_{abc}^{d}$ & $R_{ibc}^{l}$ & $%
R_{\left( l\right) \left( a\right) bc}^{\left( d\right) \left( i\right) }$
\\ \hline
$h_{M}h_{\mathcal{T}}$ & $R_{abk}^{d}$ & $R_{ibk}^{l}$ & $R_{\left( l\right)
\left( a\right) bk}^{\left( d\right) \left( i\right) }$ \\ \hline
$wh_{\mathcal{T}}$ & $P_{ab\left( c\right) }^{d\ \left( k\right) }$ & $%
P_{ib\left( c\right) }^{l\ \left( k\right) }$ & $P_{\left( l\right) \left(
a\right) b\left( c\right) }^{\left( d\right) \left( i\right) \ \left(
k\right) }$ \\ \hline
$h_{M}h_{M}$ & $R_{ajk}^{d}$ & $R_{ijk}^{l}$ & $R_{\left( l\right) \left(
a\right) jk}^{\left( d\right) \left( i\right) }$ \\ \hline
$wh_{M}$ & $P_{aj\left( c\right) }^{d\ \left( k\right) }$ & $P_{ij\left(
c\right) }^{l\ \left( k\right) }$ & $P_{\left( l\right) \left( a\right)
j\left( c\right) }^{\left( d\right) \left( i\right) \ \left( k\right) }$ \\ 
\hline
$ww$ & $S_{a\left( b\right) \left( c\right) }^{d\left( j\right) \left(
k\right) }$ & $S_{i\left( b\right) \left( c\right) }^{l\left( j\right)
\left( k\right) }$ & $S_{\left( l\right) \left( a\right) \left( b\right)
\left( c\right) }^{\left( d\right) \left( i\right) \left( j\right) \left(
k\right) }$ \\ \hline
\end{tabular}
\label{Curvature-Table}
\end{equation}
\end{theorem}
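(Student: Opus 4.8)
The plan is to combine three structural facts in turn: the $\mathcal{F}(E^{\ast})$-tensoriality of $\mathbb{R}$ in all three of its arguments, its antisymmetry in the first two, and the invariance of the three distributions under the operator $\mathbb{R}(X,Y)$ that was already recorded in (\ref{Curv-Global-Propr}). Together these force the curvature to be encoded in exactly $6\times 3=18$ scalar coefficients, and that factorisation is the real content of the statement.

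First I would verify that $(X,Y,Z)\mapsto\mathbb{R}(X,Y)Z$ is $\mathcal{F}(E^{\ast})$-linear in each slot. For the $Z$-slot this is a short computation: expanding $\mathbb{R}(X,Y)(fZ)$ via the Leibniz rule (connection axiom 4) together with $[X,Y](f)=X(Y(f))-Y(X(f))$, every term carrying a derivative of $f$ cancels and one is left with $f\,\mathbb{R}(X,Y)Z$. The analogous cancellations in the $X$- and $Y$-slots, where one additionally uses $[fX,Y]=f[X,Y]-Y(f)X$, give linearity there as well. Hence $\mathbb{R}$ is a $d$-tensor of type $(1,3)$ and is completely determined by its values on the adapted basis $\{\delta/\delta t^{a},\delta/\delta x^{i},\partial/\partial p_{i}^{a}\}$.

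Next I would organise those values. The adapted basis splits into three blocks spanning $\mathcal{H}_{\mathcal{T}}$, $\mathcal{H}_{M}$ and $\mathcal{W}$. By the antisymmetry $\mathbb{R}(X,Y)=-\mathbb{R}(Y,X)$, the first two arguments need only be chosen from an unordered pair of blocks, and there are precisely six such pairs with repetition, namely the rows $h_{\mathcal{T}}h_{\mathcal{T}}$, $h_{M}h_{\mathcal{T}}$, $wh_{\mathcal{T}}$, $h_{M}h_{M}$, $wh_{M}$, $ww$ of the table. For the third argument, the properties (\ref{Curv-Global-Propr}) assert that $\mathbb{R}(X,Y)$ carries each distribution into itself; feeding it a basis field from a single block therefore returns a field lying in that same block, that is, a single scalar coefficient times the corresponding basis field. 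This produces the three columns $h_{\mathcal{T}}$, $h_{M}$, $w$, hence the eighteen $d$-tensors displayed in (\ref{Curvature-Table}).

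Finally, to obtain the explicit evaluation identities in the statement I would substitute the nine adapted coefficients $D\Gamma(N)$ of (\ref{Local-descr-D-Nine-Coeff}) into $D_{X}D_{Y}Z-D_{Y}D_{X}Z-D_{[X,Y]}Z$, using the Poisson brackets (\ref{Poisson brackets})--(\ref{Formulas-Poisson-brackets}) to resolve the term $D_{[X,Y]}Z$; the convention in which the vertical derivatives carry a leading minus sign, as in (\ref{D-vertical}), accounts for the minus signs in the $\mathbb{R}(\cdot,\cdot)\,\partial/\partial p_{i}^{a}$ lines. That each resulting coefficient transforms as a $d$-tensor is automatic, being a contraction of the tensorial object $\mathbb{R}$ against members of the adapted basis and cobasis, whose laws (\ref{tr-laws-ad-basis}) and (\ref{tr-laws-ad-cobasis}) are themselves tensorial. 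The hard part will be only the index bookkeeping in this last expansion; there is no conceptual obstacle, since the genuine content is the reduction $18=6\times 3$ dictated by tensoriality, antisymmetry, and distribution-invariance.
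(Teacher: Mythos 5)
Your proposal is correct and follows essentially the same route as the paper: the paper derives the displayed forms from the preceding theorem on the properties \textbf{(}$\ref{Curv-Global-Propr}$\textbf{)} (i.e., $\mathbb{R}(X,Y)$ preserving the three distributions, a consequence of $D$ preserving them by parallelism) together with the standard $\mathcal{F}\left( E^{\ast }\right) $-tensoriality and antisymmetry of the curvature, evaluated on the adapted basis --- exactly your $6\times 3=18$ count --- leaving the expansion via $D\Gamma \left( N\right) $ and the brackets (\ref{Poisson brackets}) as the ``straightforward calculus.'' Your only slip is verbal, not mathematical: each table entry is a family of coefficients in a linear combination over the block's basis (e.g.\ $R_{abc}^{d}\,\delta /\delta t^{d}$ summed over $d$), not literally ``a single scalar coefficient times the corresponding basis field.''
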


\begin{theorem}
The local curvature $d$-tensors (\ref{Curvature-Table}) are given by the
following formulas:\bigskip

$\left\{ 
\begin{array}{llll}
1. & R_{abc}^{d} & = & \dfrac{\delta A_{ab}^{d}}{\delta t^{c}}-\dfrac{\delta
A_{ac}^{d}}{\delta t^{b}}+A_{ab}^{f}A_{fc}^{d}-A_{ac}^{f}A_{fb}^{d}+C_{a%
\left( f\right) }^{d\left( r\right) }R_{\left( r\right) bc}^{\left( f\right)
},\medskip \\ 
2. & R_{abk}^{d} & = & \dfrac{\delta A_{ab}^{d}}{\delta x^{k}}-\dfrac{\delta
H_{ak}^{d}}{\delta t^{b}}+A_{ab}^{f}H_{fk}^{d}-H_{ak}^{f}A_{fb}^{d}+C_{a%
\left( f\right) }^{d\left( r\right) }R_{\left( r\right) bk}^{\left( f\right)
},\medskip \\ 
3. & P_{ab\left( c\right) }^{d\ \left( k\right) } & = & \dfrac{\partial
A_{ab}^{d}}{\partial p_{k}^{c}}-C_{a\left( c\right) /b}^{d\left( k\right)
}+C_{a\left( f\right) }^{d\left( r\right) }P_{\left( r\right) b\left(
c\right) }^{\left( f\right) \ \left( k\right) },\medskip \\ 
4. & R_{ajk}^{d} & = & \dfrac{\delta H_{aj}^{d}}{\delta x^{k}}-\dfrac{\delta
H_{ak}^{d}}{\delta x^{j}}+H_{aj}^{f}H_{fk}^{d}-H_{ak}^{f}H_{fj}^{d}+C_{a%
\left( f\right) }^{d\left( r\right) }R_{\left( r\right) jk}^{\left( f\right)
},\medskip \\ 
5. & P_{aj\left( c\right) }^{d\ \left( k\right) } & = & \dfrac{\partial
H_{aj}^{d}}{\partial p_{k}^{c}}-C_{a\left( c\right) |j}^{d\left( k\right)
}+C_{a\left( f\right) }^{d\left( r\right) }P_{\left( r\right) j\left(
c\right) }^{\left( f\right) \ \left( k\right) },\medskip \\ 
6. & S_{a\left( b\right) \left( c\right) }^{d\left( j\right) \left( k\right)
} & = & \dfrac{\partial C_{a\left( b\right) }^{d\left( j\right) }}{\partial
p_{k}^{c}}-\dfrac{\partial C_{a\left( c\right) }^{d\left( k\right) }}{%
\partial p_{j}^{b}}+C_{a\left( b\right) }^{f\left( j\right) }C_{f\left(
c\right) }^{d\left( k\right) }-C_{a\left( c\right) }^{f\left( k\right)
}C_{f\left( b\right) }^{d\left( j\right) },%
\end{array}%
\right. $

$\left\{ 
\begin{array}{llll}
7. & R_{ibc}^{l} & = & \dfrac{\delta A_{ib}^{l}}{\delta t^{c}}-\dfrac{\delta
A_{ic}^{l}}{\delta t^{b}}+A_{ib}^{r}A_{rc}^{l}-A_{ic}^{r}A_{rb}^{l}+C_{i%
\left( f\right) }^{l\left( r\right) }R_{\left( r\right) bc}^{\left( f\right)
},\medskip \\ 
8. & R_{ibk}^{l} & = & \dfrac{\delta A_{ib}^{l}}{\delta x^{k}}-\dfrac{\delta
H_{ik}^{l}}{\delta t^{b}}+A_{ib}^{r}H_{rk}^{l}-H_{ik}^{r}A_{rb}^{l}+C_{i%
\left( f\right) }^{l\left( r\right) }R_{\left( r\right) bk}^{\left( f\right)
},\medskip \\ 
9. & P_{ib\left( c\right) }^{l\ \left( k\right) } & = & \dfrac{\partial
A_{ib}^{l}}{\partial p_{k}^{c}}-C_{i\left( c\right) /b}^{l\left( k\right)
}+C_{i\left( f\right) }^{l\left( r\right) }P_{\left( r\right) b\left(
c\right) }^{\left( f\right) \ \left( k\right) },\medskip \\ 
10. & R_{ijk}^{l} & = & \dfrac{\delta H_{ij}^{l}}{\delta x^{k}}-\dfrac{%
\delta H_{ik}^{l}}{\delta x^{j}}%
+H_{ij}^{r}H_{rk}^{l}-H_{ik}^{r}H_{rj}^{l}+C_{i\left( f\right) }^{l\left(
r\right) }R_{\left( r\right) jk}^{\left( f\right) },\medskip \\ 
11. & P_{ij\left( c\right) }^{l\ \left( k\right) } & = & \dfrac{\partial
H_{ij}^{l}}{\partial p_{k}^{c}}-C_{i\left( c\right) |j}^{l\left( k\right)
}+C_{i\left( r\right) }^{l\left( f\right) }P_{\left( f\right) j\left(
c\right) }^{\left( r\right) \ \left( k\right) },\medskip \\ 
12. & S_{i\left( b\right) \left( c\right) }^{l\left( j\right) \left(
k\right) } & = & \dfrac{\partial C_{i\left( b\right) }^{l\left( j\right) }}{%
\partial p_{k}^{c}}-\dfrac{\partial C_{i\left( c\right) }^{l\left( k\right) }%
}{\partial p_{j}^{b}}+C_{i\left( b\right) }^{r\left( j\right) }C_{r\left(
c\right) }^{l\left( k\right) }-C_{i\left( c\right) }^{r\left( k\right)
}C_{r\left( b\right) }^{l\left( j\right) },%
\end{array}%
\right. \bigskip$

$\left\{ 
\begin{array}{llll}
13. & R_{\left( l\right) \left( a\right) bc}^{\left( d\right) \left(
i\right) } & = & 
\begin{array}{l}
\dfrac{\delta A_{\left( l\right) \left( a\right) b}^{\left( d\right) \left(
i\right) }}{\delta t^{c}}-\dfrac{\delta A_{\left( l\right) \left( a\right)
c}^{\left( d\right) \left( i\right) }}{\delta t^{b}}+A_{\left( l\right)
\left( f\right) b}^{\left( d\right) \left( r\right) }A_{\left( r\right)
\left( a\right) c}^{\left( f\right) \left( i\right) }-\medskip  \\ 
-A_{\left( l\right) \left( f\right) c}^{\left( d\right) \left( r\right)
}A_{\left( r\right) \left( a\right) b}^{\left( f\right) \left( i\right)
}+C_{\left( l\right) \left( a\right) \left( f\right) }^{\left( d\right)
\left( i\right) \left( r\right) }R_{\left( r\right) bc}^{\left( f\right) },%
\end{array}%
\medskip  \\ 
14. & R_{\left( l\right) \left( a\right) bk}^{\left( d\right) \left(
i\right) } & = & 
\begin{array}{l}
\dfrac{\delta A_{\left( l\right) \left( a\right) b}^{\left( d\right) \left(
i\right) }}{\delta x^{k}}-\dfrac{\delta H_{\left( l\right) \left( a\right)
k}^{\left( d\right) \left( i\right) }}{\delta t^{b}}+A_{\left( l\right)
\left( f\right) b}^{\left( d\right) \left( r\right) }H_{\left( r\right)
\left( a\right) k}^{\left( f\right) \left( i\right) }-\medskip  \\ 
-H_{\left( l\right) \left( f\right) k}^{\left( d\right) \left( r\right)
}A_{\left( r\right) \left( a\right) b}^{\left( f\right) \left( i\right)
}+C_{\left( l\right) \left( a\right) \left( f\right) }^{\left( d\right)
\left( i\right) \left( r\right) }R_{\left( r\right) bk}^{\left( f\right) },%
\end{array}%
\medskip  \\ 
15. & P_{\left( l\right) \left( a\right) b\left( c\right) }^{\left( d\right)
\left( i\right) \ \left( k\right) } & = & \dfrac{\partial A_{\left( l\right)
\left( a\right) b}^{\left( d\right) \left( i\right) }}{\partial p_{k}^{c}}%
-C_{\left( l\right) \left( a\right) \left( c\right) /b}^{\left( d\right)
\left( i\right) \left( k\right) }+C_{\left( l\right) \left( a\right) \left(
f\right) }^{\left( d\right) \left( i\right) \left( r\right) }P_{\left(
r\right) b\left( c\right) }^{\left( f\right) \ \left( k\right) },\medskip 
\\ 
16. & R_{\left( l\right) \left( a\right) jk}^{\left( d\right) \left(
i\right) } & = & 
\begin{array}{l}
\dfrac{\delta H_{\left( l\right) \left( a\right) j}^{\left( d\right) \left(
i\right) }}{\delta x^{k}}-\dfrac{\delta H_{\left( l\right) \left( a\right)
k}^{\left( d\right) \left( i\right) }}{\delta x^{j}}+H_{\left( l\right)
\left( f\right) j}^{\left( d\right) \left( r\right) }H_{\left( r\right)
\left( a\right) k}^{\left( f\right) \left( i\right) }-\medskip  \\ 
-H_{\left( l\right) \left( f\right) k}^{\left( d\right) \left( r\right)
}H_{\left( r\right) \left( a\right) j}^{\left( f\right) \left( i\right)
}+C_{\left( l\right) \left( a\right) \left( f\right) }^{\left( d\right)
\left( i\right) \left( r\right) }R_{\left( r\right) jk}^{\left( f\right) },%
\end{array}%
\medskip  \\ 
17. & P_{\left( l\right) \left( a\right) j\left( c\right) }^{\left( d\right)
\left( i\right) \ \left( k\right) } & = & \dfrac{\partial H_{\left( l\right)
\left( a\right) j}^{\left( d\right) \left( i\right) }}{\partial p_{k}^{c}}%
-C_{\left( l\right) \left( a\right) \left( c\right) |j}^{\left( d\right)
\left( i\right) \left( k\right) }+C_{\left( l\right) \left( a\right) \left(
f\right) }^{\left( d\right) \left( i\right) \left( r\right) }P_{\left(
r\right) j\left( c\right) }^{\left( f\right) \ \left( k\right) },\medskip 
\\ 
18. & S_{\left( l\right) \left( a\right) \left( b\right) \left( c\right)
}^{\left( d\right) \left( i\right) \left( j\right) \left( k\right) } & = & 
\begin{array}{l}
\dfrac{\partial C_{\left( l\right) \left( a\right) \left( b\right) }^{\left(
d\right) \left( i\right) \left( j\right) }}{\partial p_{k}^{c}}-\dfrac{%
\partial C_{\left( l\right) \left( a\right) \left( c\right) }^{\left(
d\right) \left( i\right) \left( k\right) }}{\partial p_{j}^{b}}+C_{\left(
l\right) \left( f\right) \left( b\right) }^{\left( d\right) \left( r\right)
\left( j\right) }C_{\left( r\right) \left( a\right) \left( c\right)
}^{\left( f\right) \left( i\right) \left( k\right) }-\medskip  \\ 
-C_{\left( l\right) \left( f\right) \left( c\right) }^{\left( d\right)
\left( r\right) \left( k\right) }C_{\left( r\right) \left( a\right) \left(
b\right) }^{\left( f\right) \left( i\right) \left( j\right) }.%
\end{array}%
\end{array}%
\right. \ $
\end{theorem}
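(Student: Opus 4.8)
The plan is to exploit the structural theorem just established, namely (\ref{Curv-Global-Propr}): since the $N$-linear connection $D$ preserves the distributions $\mathcal{H}_{\mathcal{T}}$, $\mathcal{H}_M$ and $\mathcal{W}$ by parallelism, the operator $\mathbb{R}(X,Y)$ maps each of these distributions into itself. Consequently $\mathbb{R}$ is completely encoded by its action on triples of adapted basis fields, and evaluating $\mathbb{R}(Y_2,Y_1)Z$ with $Y_1,Y_2,Z$ ranging over $\{\delta/\delta t^a,\ \delta/\delta x^i,\ \partial/\partial p_i^a\}$ produces precisely the eighteen d-tensors displayed in (\ref{Curvature-Table}). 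So it suffices to compute these eighteen coefficients directly from the defining formula (\ref{Curv-Global}).

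First I would substitute into (\ref{Curv-Global}) the rules expressing $D$ on the adapted basis (the nine-coefficient theorem), expanding the iterated derivatives $D_{X}D_{Y}Z$ by the Leibniz rule (property 4 in the definition of a linear connection): differentiating the structure coefficient along $X$ yields a $\delta/\delta t$-, $\delta/\delta x$-, or $\partial/\partial p$-derivative, while the surviving piece is a product of two structure coefficients. The third summand $D_{[X,Y]}Z$ is handled with the Poisson-bracket formulas (\ref{Poisson brackets})--(\ref{Formulas-Poisson-brackets}): because the bracket of two adapted basis fields is always vertical, $D_{[X,Y]}Z$ contributes exactly one term, in which the relevant $R$- or $B$-d-tensor is contracted against a $C$-coefficient.

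Three qualitatively distinct patterns then appear, according to the horizontal/vertical type of the pair $(Y_2,Y_1)$. When both arguments are horizontal (the $h_{\mathcal{T}}h_{\mathcal{T}}$, $h_Mh_{\mathcal{T}}$, $h_Mh_M$ rows), the bracket is a nonzero vertical field, and the $C\cdot R$ correction appearing in formulas $1,2,4,7,8,10,13,14,16$ is precisely this $D_{[X,Y]}Z$ term; the remaining part is the familiar curvature expression, a difference of $\delta$-derivatives plus a commutator of coefficient matrices. When one argument is vertical (the $wh_{\mathcal{T}}$, $wh_M$ rows), the mixed bracket carries the $B$-d-tensors, and the \emph{clean} shape of formulas $3,5,9,11,15,17$ --- a $\partial/\partial p$-derivative, minus a horizontal covariant derivative of $C$, plus a $C\cdot P$ term --- emerges only after reorganizing: the $\delta$-derivative of $C$ produced by $D_{Y_1}D_{Y_2}Z$ must be combined with the coefficient cross-terms and with the $B$-bracket term so as to assemble the operator ${}_{/b}$ (resp. ${}_{|j}$) acting on $C$, e.g. $C_{a(c)/b}^{d(k)}$, together with the full mixed tensor $P_{(r)\cdots}$ in place of bare $B$. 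When both arguments are vertical (the $ww$ row), the bracket vanishes, so formulas $6,12,18$ reduce to a pure $\partial/\partial p$-difference plus the $C$-quadratic commutator.

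Finally I would note that the three choices of $Z$ run through the three columns of (\ref{Curvature-Table}) via structurally identical computations, in which the coefficients $A$, $H$, $C$ enter carrying the temporal, spatial, or polymomentum index of $Z$; hence only a handful of representative evaluations need be written out, the rest following by the same bookkeeping. The main obstacle is exactly this bookkeeping in the mixed $P$-type entries: tracking the placement and contraction of the numerous upper, lower, and parenthesised (polymomentum) indices, and checking that the $\partial/\partial p$-derivative term, the connection cross-terms, and the $B$-bracket term genuinely collapse into the covariant-derivative notation ${}_{/b}$, ${}_{|j}$ and into the full $P$-tensor asserted in the statement. Beyond this careful index accounting, no conceptual difficulty remains.
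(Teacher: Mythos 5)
Your proposal is correct and takes essentially the same approach as the paper: the paper likewise evaluates (\ref{Curv-Global}) on the adapted basis using the nine coefficients (\ref{Local-descr-D-Nine-Coeff}) and the Poisson-bracket formulas (\ref{Poisson brackets})--(\ref{Formulas-Poisson-brackets}), writing out in full precisely one representative mixed entry, $P_{\left( l\right) \left( a\right) b\left( c\right) }^{\left( d\right) \left( i\right) \ \left( k\right) }$ (formula 15), where exactly the reorganization you flag as the main obstacle is carried out --- assembling the $\mathcal{T}$-horizontal covariant derivative $C_{\left( l\right) \left( a\right) \left( c\right) /b}^{\left( d\right) \left( i\right) \left( k\right) }$ from the $\delta$-derivative and cross-terms, and combining the $B$-bracket term with the $A$-term into the full torsion tensor $P$ via the last relation of (\ref{Tors-local-restu+hT}). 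The remaining seventeen identities are then, just as you anticipate, dispatched by the statement that they follow in the same manner.
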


\begin{proof}
Taking into account the description in the adapted basis (\ref{ad-basis-nlc}%
) of the $N$-linear connection $D\Gamma \left( N\right) $ given by the local
coefficients (\ref{Local-descr-D-Nine-Coeff}), together with the formulas (%
\ref{Poisson brackets}) and (\ref{Formulas-Poisson-brackets}), we obtain,
for example,%
\begin{equation*}
\mathbb{R}\left( \frac{\partial }{\partial p_{k}^{c}},\frac{\delta }{\delta
t^{b}}\right) \frac{\partial }{\partial p_{i}^{a}}=-P_{\left( l\right)
\left( a\right) b\left( c\right) }^{\left( d\right) \left( i\right) \ \left(
k\right) }\frac{\partial }{\partial p_{l}^{d}}=
\end{equation*}%
\begin{equation*}
=D_{\dfrac{\partial }{\partial p_{k}^{c}}}D_{\dfrac{\delta }{\delta t^{b}}}%
\frac{\partial }{\partial p_{i}^{a}}-D_{\dfrac{\delta }{\delta t^{b}}}D_{%
\dfrac{\partial }{\partial p_{k}^{c}}}\frac{\partial }{\partial p_{i}^{a}}%
-D_{\left[ \dfrac{\partial }{\partial p_{k}^{c}},\dfrac{\delta }{\delta t^{b}%
}\right] }\frac{\partial }{\partial p_{i}^{a}}=\medskip
\end{equation*}%
\begin{equation*}
=-D_{\dfrac{\partial }{\partial p_{k}^{c}}}\left( A_{\left( r\right) \left(
a\right) b}^{\left( f\right) \left( i\right) }\frac{\partial }{\partial
p_{r}^{f}}\right) +D_{\dfrac{\delta }{\delta t^{b}}}\left( C_{\left(
r\right) \left( a\right) \left( c\right) }^{\left( f\right) \left( i\right)
\left( k\right) }\frac{\partial }{\partial p_{r}^{f}}\right) +B_{\left(
r\right) b\left( c\right) }^{\left( f\right) \ \left( k\right) }D_{\dfrac{%
\partial }{\partial p_{r}^{f}}}\frac{\partial }{\partial p_{i}^{a}}=
\end{equation*}%
\begin{eqnarray*}
&=&-\frac{\partial A_{\left( l\right) \left( a\right) b}^{\left( d\right)
\left( i\right) }}{\partial p_{k}^{c}}\frac{\partial }{\partial p_{l}^{d}}%
+A_{\left( r\right) \left( a\right) b}^{\left( f\right) \left( i\right)
}C_{\left( l\right) \left( f\right) \left( c\right) }^{\left( d\right)
\left( r\right) \left( k\right) }\frac{\partial }{\partial p_{l}^{d}}+ \\
&&+\frac{\delta C_{\left( l\right) \left( a\right) \left( c\right) }^{\left(
d\right) \left( i\right) \left( k\right) }}{\delta t^{b}}\frac{\partial }{%
\partial p_{l}^{d}}-C_{\left( r\right) \left( a\right) \left( c\right)
}^{\left( f\right) \left( i\right) \left( k\right) }A_{\left( l\right)
\left( f\right) b}^{\left( d\right) \left( r\right) }\frac{\partial }{%
\partial p_{l}^{d}}-B_{\left( r\right) b\left( c\right) }^{\left( f\right) \
\left( k\right) }C_{\left( l\right) \left( a\right) \left( f\right)
}^{\left( d\right) \left( i\right) \left( r\right) }\frac{\partial }{%
\partial p_{l}^{d}}.
\end{eqnarray*}%
Therefore, we have 
\begin{eqnarray*}
P_{\left( l\right) \left( a\right) b\left( c\right) }^{\left( d\right)
\left( i\right) \ \left( k\right) } &=&\frac{\partial A_{\left( l\right)
\left( a\right) b}^{\left( d\right) \left( i\right) }}{\partial p_{k}^{c}}-%
\underline{\underline{A_{\left( r\right) \left( a\right) b}^{\left( f\right)
\left( i\right) }C_{\left( l\right) \left( f\right) \left( c\right)
}^{\left( d\right) \left( r\right) \left( k\right) }}}- \\
&&-\underline{\underline{\frac{\delta C_{\left( l\right) \left( a\right)
\left( c\right) }^{\left( d\right) \left( i\right) \left( k\right) }}{\delta
t^{b}}+C_{\left( r\right) \left( a\right) \left( c\right) }^{\left( f\right)
\left( i\right) \left( k\right) }A_{\left( l\right) \left( f\right)
b}^{\left( d\right) \left( r\right) }}}+B_{\left( r\right) b\left( c\right)
}^{\left( f\right) \ \left( k\right) }C_{\left( l\right) \left( a\right)
\left( f\right) }^{\left( d\right) \left( i\right) \left( r\right) }.
\end{eqnarray*}%
Now, using the formula of $\mathcal{T}$-horizontal covariant derivative, we
get%
\begin{eqnarray*}
C_{\left( l\right) \left( a\right) \left( c\right) /b}^{\left( d\right)
\left( i\right) \left( k\right) } &=&\underline{\underline{\frac{\delta
C_{\left( l\right) \left( a\right) \left( c\right) }^{\left( d\right) \left(
i\right) \left( k\right) }}{\delta t^{b}}-C_{\left( r\right) \left( a\right)
\left( c\right) }^{\left( f\right) \left( i\right) \left( k\right)
}A_{\left( l\right) \left( f\right) b}^{\left( d\right) \left( r\right)
}+C_{\left( l\right) \left( f\right) \left( c\right) }^{\left( d\right)
\left( r\right) \left( k\right) }A_{\left( r\right) \left( a\right)
b}^{\left( f\right) \left( i\right) }}}+ \\
&&+C_{\left( l\right) \left( a\right) \left( f\right) }^{\left( d\right)
\left( i\right) \left( r\right) }A_{\left( r\right) \left( c\right)
b}^{\left( f\right) \left( k\right) },
\end{eqnarray*}%
and, consequently, interchanging the underlined terms, it follows%
\begin{equation*}
P_{\left( l\right) \left( a\right) b\left( c\right) }^{\left( d\right)
\left( i\right) \ \left( k\right) }=\frac{\partial A_{\left( l\right) \left(
a\right) b}^{\left( d\right) \left( i\right) }}{\partial p_{k}^{c}}%
-C_{\left( l\right) \left( a\right) \left( c\right) /b}^{\left( d\right)
\left( i\right) \left( k\right) }+C_{\left( l\right) \left( a\right) \left(
f\right) }^{\left( d\right) \left( i\right) \left( r\right) }P_{\left(
r\right) b\left( c\right) }^{\left( f\right) \ \left( k\right) },
\end{equation*}%
where we also used the last formula from (\ref{Tors-local-restu+hT}).
Obviously, this is the $15^{\text{-}th}$ relation of the above lot.

The other equalities are given in the same manner.
\end{proof}

\begin{example}
For the canonical Berwald $\overset{0}{N}$-linear connection $B\Gamma \left( 
\overset{0}{N}\right) $, which is associated to the pair of semi-Riemannian
metrics $h_{ab}(t)$ and $\varphi _{ij}(x)$, all local curvature $d$-tensors
vanish, except%
\begin{equation*}
R_{abc}^{d}=\varkappa _{abc}^{d},\quad R_{\left( l\right) \left( a\right)
bc}^{\left( d\right) \left( i\right) }=-\delta _{l}^{i}\varkappa
_{abc}^{d},\quad R_{ijk}^{l}=\mathbf{r}_{ijk}^{l},\quad R_{\left( l\right)
\left( a\right) jk}^{\left( d\right) \left( i\right) }=\delta _{a}^{d}%
\mathbf{r}_{ljk}^{i},
\end{equation*}%
where $\varkappa _{abc}^{d}(t)\ $(resp. $\mathbf{r}_{ijk}^{l}(x)$) are the
local curvature tensors of the semi-Rie\-ma\-nni\-an metric $h_{ab}(t)$
(resp. $\varphi _{ij}(x)$).
\end{example}

\textbf{Author's address: }Gheorghe ATANASIU, Str. Gh. Baiulescu, Nr. 11, Bra%
\c{s}ov, BV 500107, Romania.

\textbf{E-mail}: gh\_atanasiu@yahoo.com

\textbf{Place of work: }University "Transilvania" of Bra\c{s}ov, Faculty of
Ma\-the\-ma\-tics and Informatics, Department of Algebra, Geometry and
Differential Equations.

\bigskip

\textbf{Author's address: }Mircea NEAGU, Str. L\u{a}m\^{a}i\c{t}ei, Nr. 66,
Bl. 93, Sc. G, Ap. 10, Bra\c{s}ov, BV 500371, Romania.

\textbf{E-mail}: mirceaneagu73@yahoo.com

\textbf{Place of work: }University "Transilvania" of Bra\c{s}ov, Faculty of
Ma\-the\-ma\-tics and Informatics, Department of Algebra, Geometry and
Differential Equations.


\begin{thebibliography}{99}
\bibitem{Abr-Mars[1]} R. Abraham, J. E. Marsden, \textit{Foundations of
Mechanics}, Benjamin, New York, 1978.

\bibitem{An-Kaw[2]} M. Anastasiei, H. Kawaguchi, \textit{A Geometrical
Theory of Time Dependent Lagrangians. I. Non-linear Connections, II.
M-connections}, Tensor N. S. \textbf{48} (1989), 273-282, 283-293.

\bibitem{An-Kaw[3]} M. Anastasiei, H. Kawaguchi, \textit{A Geometrical
Theory of Time Dependent Lagrangians. III. Applications}, Tensor N. S. 
\textbf{49} (1990), 296-304.

\bibitem{Asanov[4]} G. S. Asanov, \textit{Jet Extension of Finslerian Gauge
Approach}, Fortschritte der Physik \textbf{38}, No. \textbf{8} (1990),
571-610.

\bibitem{Atanasiu[5]} Gh. Atanasiu, \textit{The Invariant Expression of
Hamilton Geometry}, Tensor N. S. \textbf{47}, No. \textbf{3} (1988), 225-234.

\bibitem{Atan-Klepp[6]} Gh. Atanasiu, F. C. Klepp, \textit{Nonlinear
Connections in Cotangent Bundle}, Publ. Math., Debrecen, Hungary, \textbf{39}%
, No. \textbf{1}-\textbf{2} (1991), 107-111.

\bibitem{Buc-Mir[7]} I. Buc\u{a}taru, R. Miron, \textit{Finsler-Lagrange
Geometry. Applications to Dynamical Systems}, Editura Academiei Rom\^{a}ne,
Bucharest, 2007.

\bibitem{Gi-Sard-1[8]} G. Giachetta, L. Mangiarotti, G. Sardanashvily, 
\textit{Covariant Hamiltonian Field Theory},
http://arXiv.org/hep-th/9904062, (1999).

\bibitem{Gi-Sard-2[9]} G. Giachetta, L. Mangiarotti, G. Sardanashvily, 
\textit{Polysymplectic Hamiltonian Formalism and some Quantum Outcomes},
http://arXiv.org/hep-th/0411005, (2004).

\bibitem{Got-Mars(1)[10]} M. Gotay, J. Isenberg, J. E. Marsden, R.
Montgomery, \textit{Momentum Maps and Classical Fields. Part I. Covariant
Field Theory}, http://arXiv.org/physics/9801019v2, (2004).

\bibitem{Got-Mars(2)[11]} M. Gotay, J. Isenberg, J. E. Marsden, \textit{%
Momentum Maps and Classical Fields. Part II. Canonical Analysis of Field
Theories}, http://arXiv.org/math-ph/0411032, (2004).

\bibitem{Kan0[12]} I. V. Kanatchikov, \textit{Basic Structures of the
Covariant Canonical Formalism for Fields Based on the De Donder-Weyl Theory}%
, http://arXiv.org/hep-th/9410238, (1994).

\bibitem{Kan1[13]} I. V. Kanatchikov, \textit{On Quantization of Field
Theories in Polymomentum Variables}, AIP Conf. Proc. \textbf{453} (1998),
356-367; http://arXiv.org/hep-th/9811016, (1998).

\bibitem{Kan2[14]} I. V. Kanatchikov, \textit{On the Canonical Structure of
the De Donder-Weyl Covariant Hamiltonian Formulation of Field Theory I.
Graded Poisson brackets and equations of motion},
http://arXiv.org/hep-th/9312162, (1993).

\bibitem{Miron-Hamilton[15]} R. Miron, \textit{Hamilton Geometry}, An. \c{S}%
t. "Al. I. Cuza" Univ., Ia\c{s}i, Romania, \textbf{35} (1989), 33-67.

\bibitem{Mir-An[16]} R. Miron, M. Anastasiei, \textit{The Geometry of
Lagrange Spaces: Theory and Applications}, Kluwer Academic Publishers, 1994.

\bibitem{Miron-Hr-Shimada[17]} R. Miron, D. Hrimiuc, H. Shimada, S. V. Sab%
\u{a}u, \textit{The Geometry of Hamilton and Lagrange Spaces}, Kluwer
Academic Publishers, 2001.

\bibitem{Mir-Kirk[18]} R. Miron, M. S. Kirkovits, M. Anastasiei, \textit{A
Geometrical Model for Variational Problems of Multiple Integrals}, Proc. of
Conf. on Diff. Geom. and Appl., Dubrovnik, Yugoslavia, June 26-July 3,
(1988), 8-25.

\bibitem{Neagu-Carte[19]} M. Neagu, \textit{Riemann-Lagrange Geometry on
1-Jet Spaces}, Matrix Rom, Bucharest, 2005.

\bibitem{Ne-Rheon[20]} M. Neagu, \textit{The Geometry of Relativistic
Rheonomic Lagrange Spaces}, Proceedings of Workshop on Global Analysis,
Differential Geometry and Lie Algebras, No. \textbf{5} (2001); Editor: Prof.
Dr. Grigorios Tsagas, University \textquotedblleft
Aristotle\textquotedblright\ of Thessaloniki, Greece, 142-169.

\bibitem{Ne-Udr[21]} M. Neagu, C. Udri\c{s}te, \textit{Torsion, Curvature
and Deflection d-Tensors on }$J^{1}(T,M)$, Balkan Journal of Geometry and
Its Applications, Vol. \textbf{6}, No. \textbf{1} (2001), 29-45.

\bibitem{Saunders[22]} D. Saunders, \textit{The Geometry of Jet Bundles},
Cambridge University Press, New York, London, 1989.
\end{thebibliography}
\end{document}